 \newcommand\reallywidehat[1]{%
 	\savestack{\tmpbox}{\stretchto{%
 			\scaleto{%
 				\scalerel*[\widthof{\ensuremath{#1}}]{\kern-.6pt\bigwedge\kern-.6pt}%
 				{\rule[-\textheight/2]{1ex}{\textheight}}
 			}{\textheight}%
 		}{0.5ex}}%
 	\stackon[1pt]{#1}{\tmpbox}%
 }
 \newcommand{\dbZ}{{\mathbb Z}}
 \newcommand{\la}{\langle}
 \newcommand{\ra}{\rangle}
 \newcommand{\ad}{\mbox{ad}}
 \newcommand{\slnr}{\mathfrak{sl}_n(R)} 
 \newcommand{\slfr}{\mathfrak{sl}_4(R)}
 \newcommand{\sltr}{\mathfrak{sl}_3(R)}
 \newcommand{\stnr}{\mathfrak{st}_n(R)}
 \newcommand{\stfr}{\mathfrak{st}_4(R)}
  \newcommand{\sttr}{\mathfrak{st}_3(R)}
     \newcommand{\slnrbar}{\mathfrak{sl}_n(\mathfrak{R})}
 \theoremstyle{plain}
 \newtheorem{theorem}{Theorem}[section]
 \newtheorem*{theorem*}{Theorem}
 \newtheorem*{claim*}{\textit{Claim}}
  \newtheorem{claim}{\textit{Claim}}
 \newtheorem{corollary}[theorem]{Corollary}
 \newtheorem{prop}[theorem]{Proposition}
 \newtheorem{lemma}[theorem]{Lemma}
 \theoremstyle{definition}
 \newtheorem{definition}[theorem]{Definition}
 \newtheorem{notation}[theorem]{Notation}
 \newtheorem*{remarkstar}{\textit{Remark}}
 \newtheorem{remark}[theorem]{Remark}
 \numberwithin{equation}{section}
\begin{document}

\title{Finite presentability of  universal central extensions of ${\mathfrak{sl}_n}$ }
\author{Efim Zelmanov}
\address{Department of Mathematics, University of California at San Diego,
9500 Gilman Drive, La Jolla, California 92093-0112, USA}
\author{Zezhou Zhang}
\address{Department of Mathematics, Beijing Normal University, China, Beijing 100875}
\email{zz2d@bnu.edu.cn}
\thanks{* The second-named  author is partially supported by ``the Fundamental Research Funds for the Central Universities'', Grant No. 2018NTST15.}

\begin{abstract}
	In this paper we discuss finite presentability of the universal central extensions of Lie algebras  ${\mathfrak{sl}_n(R)}$, where $n\geq 3$ and $R$ is a unital associative $k$-algebra. We show that a universal central extension is finitely presented if and only if the algebra $R$ is finitely presented.
\end{abstract}

\renewcommand\thesubsection{\textit{\thesection}.\Roman{subsection}}
\renewcommand\thesubsubsection{\textit{\thesubsection}.\Alph{subsubsection}}
\maketitle

\section{Introduction}

Recall that if $\mathfrak{g}$ is a Lie algebra and $\mathfrak{a}$ an abelian Lie algebra,  then $f: \mathcal{L} \rightarrow \mathfrak{g} $ is called a central extension of $\mathfrak{g}$ (by $\mathfrak{a}$) if it fits into the exact sequence \[0\rightarrow \mathfrak{a} \rightarrow \mathcal{L} \stackrel{f}{\rightarrow} \mathfrak{g} \rightarrow 0,\] where $\ker f \in Z(\mathcal{L})$, the center of $\mathcal{L}$. A central extension $u: \mathcal{L}\rightarrow \mathfrak{g}$ is called a universal central extension if there exists a unique homomorphism from $u: \mathcal{L}\rightarrow \mathfrak{g}$ to any other central extension $u: \mathcal{M}\rightarrow \mathfrak{g}$ of $\mathfrak{g}$. When $\mathfrak{g}$ is a perfect Lie algebra, the existence of its universal central extension is guaranteed. This extension is perfect as well.   Study of universal central extensions of perfect groups and Lie algebras goes back to the work of Schur (see \cite{schur1904darstellung}). See \cite{Neher} for record of fundamental results, along with references.

In this paper, we focus on $\widehat{\mathfrak{sl}_n(R)}$, the universal central extension of $\mathfrak{sl}_n(R)$, where $R$ is a finitely generated $k$-algebra, and $k$ a commutative ring. Recall that $\slnr$ is the Lie algebra generated by off-diagonal matrix units among $n \times n$ matrices; it is a subalgebra of $\mathfrak{gl}_n(R)$. Equivalently,  $\slnr=\{M\in \mathfrak{gl}_n(R) \mid \mbox{tr} (M) \in [R,R] \}$.  Another common notation for this Lie algebra is $\mathfrak{e}_n(R)$ (where e stands for ``elementary''), as its precise analog among groups is denoted $E_n(R)$, the  elementary linear group.  

When $R$ is a free $k$-module, $\widehat{\mathfrak{sl}_n(R)}$ for $n\geq3$ have been studied in \cite{BlochLiealgebra,Kassellodaycentral,GaoShang}. In the central extension \[0\rightarrow H_2(\mathfrak{sl}_n(R),k) \rightarrow\widehat{\mathfrak{sl}_n(R)}  \rightarrow \slnr \rightarrow 0,\] the extension part  $H_2(\mathfrak{sl}_n(R),k)$ is isomorphic to Connes' cyclic homology group $HC_1 (R)$ when $n\geq 5$, while having $HC_1 (R)$ as a direct summand for $n=3,4$ (see \cite{GaoShang}). This has been a motivation for computing such extensions (see \cite{Kassellodaycentral,loday2013cyclic}).

Is $\widehat{\mathfrak{sl}_n(R)}$ finitely presented as a $k$-Lie algebra? We remark that for Lie algebras $\mathfrak{g}$, it is well known that (see \cite[7.5.2]{WeibelHA}) infinite dimensionality of $H_2(\mathfrak{g},k)$ implies that $\mathfrak{g}$ is non-finitely presented. This method allows us to derive negative answers: for example, in the setting of the previous paragraph,  $\mathfrak{sl}_n(R)$ is not finitely presented if $HC_1 (R)$ is infinite dimensional. However, it gives nothing for $\widehat{\mathfrak{sl}_n(R)}$, whose second homology is zero.

 The analogue of this question, phrased  for Steinberg groups $\text{St}_n(R)$, was considered by Rehmann-Soule in \cite{RehmannSoule} for $R$ commutative and by Kristic-McCool in \cite{KrsticMcCool} for general $R$. The connection between $\text{St}_n(R)$ and the universal central extension of $\mbox{E}_n(R)$ is well documented: for example, they are equal when $n\geq 4$ and $R$ commutative, while $\varinjlim\text{St}_n(R)$ is the universal central extension of $\varinjlim\text{E}_n(R)$, for all rings $R$(see \cite[1.4.13]{hahn1989classical}). 

Our main result is:

\begin{theorem}\label{mainintro}
	Let $k$ be a commutative associative ring, let $n\geq 3$ and let $R$ be a finitely generated unital associative $k$-algebra. Then $R$ being finitely presented as a $k$-algebra is equivalent to $ \widehat{\slnr}$ being  finitely presented as a Lie algebra over $k$.
\end{theorem}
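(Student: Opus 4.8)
The plan is to work throughout with the explicit Steinberg presentation of the universal central extension. For $n\geq 3$ one has a canonical isomorphism $\widehat{\slnr}\cong\stnr$, where $\stnr$ is the Steinberg Lie algebra: the $k$-Lie algebra generated by symbols $X_{ij}(r)$ (for $i\neq j$ and $r\in R$), with $r\mapsto X_{ij}(r)$ being $k$-linear, subject to $[X_{ij}(r),X_{jl}(s)]=X_{il}(rs)$ for distinct $i,j,l$ and $[X_{ij}(r),X_{lm}(s)]=0$ for $j\neq l$, $i\neq m$. I would record at the outset the elementary fact that each map $X_{ij}\colon R\to\stnr$ is injective, since composing with the projection $\stnr\to\slnr$, $X_{ij}(r)\mapsto rE_{ij}$, recovers the $(i,j)$-entry. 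I would also establish as a preliminary the ``right exactness'' statement that for a two-sided ideal $I\trianglelefteq R$ one has $\mathfrak{st}_n(R/I)\cong\mathfrak{st}_n(R)/\langle X_{ij}(f):f\in I,\ i\neq j\rangle_{\mathrm{ideal}}$, which is a routine comparison of defining relations.

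The direction ``$\stnr$ finitely presented $\Rightarrow R$ finitely presented'' I expect to be the easier one. Fix a surjection $F=k\langle x_1,\dots,x_d\rangle\twoheadrightarrow R$ (possible since $R$ is finitely generated) with kernel the two-sided ideal $I$, so that $\stnr$ is the quotient of $\mathfrak{st}_n(F)$ by the Lie ideal $K$ generated by $\{X_{ij}(f):f\in I\}$. Since $\mathfrak{st}_n(F)$ is finitely generated (each $X_{ij}(w)$ for a monomial $w$ of length $\geq 2$ reduces, via a commutator, to shorter arguments, so the $X_{ij}(x_l)$, $0\le l\le d$, generate) and $\stnr$ is finitely presented, the standard argument shows $K$ is finitely generated as a Lie ideal. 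The finitely many generators involve only finitely many elements $f_1,\dots,f_m\in I$; setting $J=(f_1,\dots,f_m)\subseteq I$, one checks $K$ already equals the Lie ideal generated by $\{X_{ij}(f):f\in J\}$, and then, applying the projection $\mathfrak{st}_n(F)\to\mathfrak{st}_n(F/J)$ together with injectivity of $X_{ij}$ for the ring $F/J$, deduces that $X_{ij}(f)\in K$ forces $f\in J$ for every $f\in I$. Hence $I=J$ is finitely generated.

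For the converse, ``$R$ finitely presented $\Rightarrow\stnr$ finitely presented'', I would split into a free case and a quotient step. The quotient step mirrors the above: writing $R=F/I$ with $I=(f_1,\dots,f_m)$, the ideal $K=\ker(\mathfrak{st}_n(F)\to\stnr)$ is generated as a Lie ideal by the $X_{ij}(u f_c v)$ over monomials $u,v$; but left and right multiplication by a monomial is realized by bracketing, so for suitable indices $X_{il}(u f_c v)=[X_{ij}(u),[X_{jm}(f_c),X_{ml}(v)]]$ lies in the Lie ideal generated by the finitely many $X_{jm}(f_c)$. Thus adjoining these finitely many relations to a finite presentation of $\mathfrak{st}_n(F)$ yields one for $\stnr$.

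The main obstacle is therefore the free case: showing $\mathfrak{st}_n(F)$ is finitely presented for $F=k\langle x_1,\dots,x_d\rangle$. I would present it on the finite generating set $\{X_{ij}(x_l):i\neq j,\ 0\le l\le d\}$ (with $x_0=1$), imposing a finite list of relations, namely the commuting relations $[X_{ij}(x_a),X_{lm}(x_b)]=0$ and the ``path-independence'' relations asserting that $[X_{ij}(x_a),X_{jl}(x_b)]$ does not depend on the admissible middle index $j$, so that each $X_{il}(x_ax_b)$ is unambiguously defined. The content is then to verify, by induction on the length of monomials and repeated use of the Jacobi identity, that the full family of Steinberg relations for arbitrary $r,s\in F$ is a consequence of this finite list; equivalently, that the Lie algebra so presented maps isomorphically onto $\mathfrak{st}_n(F)$. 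I expect the argument to be uniform and comparatively clean for $n\geq 4$, where at least two admissible middle indices are available for every required product and the path-independence relations can be invoked directly. The delicate case is $n=3$: with only three indices there is no freedom in the choice of middle index, so the consistency of the two factorizations of a length-three monomial must be extracted from the Jacobi identity alone. This is presumably why $\sttr$ (and $n=4$) are singled out in the notation, and I would treat $n=3$ first, by a separate and more careful analysis, before recording the easier general case.
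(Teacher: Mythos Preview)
Your reduction of both directions to the free case is essentially the same strategy as the paper's, and your treatment of the quotient step is correct. However, there is a genuine error at the very outset: the claimed isomorphism $\widehat{\slnr}\cong\stnr$ for all $n\geq 3$ is false. It holds only for $n\geq 5$. For $n=3$ and $n=4$ the Steinberg Lie algebra is merely a central extension of $\slnr$, not the universal one; the universal central extension is a further (split) central extension of $\stnr$ by six copies of $R/(pR+R[R,R])$ with $p=3,2$ respectively (this is the content of the Gao--Shang computation cited in the paper). Concretely, in $\widehat{\mathfrak{sl}_4(R)}$ the brackets $[X_{ij}(a),X_{kl}(b)]$ with $i,j,k,l$ distinct are not zero but only central, and similarly for $[X_{ij}(a),X_{il}(b)]$ in $\widehat{\mathfrak{sl}_3(R)}$. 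So your ``right exactness'' and injectivity arguments, phrased for $\stnr$, do not automatically transfer to $\widehat{\slnr}$ when $n\leq 4$; you would need the modified presentations (those in which the offending brackets are declared central rather than zero) and then rerun the argument.

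A second, more technical gap concerns the free case itself. Your proposed finite relation set---commuting relations and path-independence relations among generators of degree $\leq 1$---is too small even for $n\geq 5$. The paper's proof shows that one needs all Steinberg-type relations up to total degree $m$ before the induction closes, with $m\geq 4$ for $n\geq 5$, $m\geq 10$ for $n=4$, and $m\geq\max\{|\mathcal{X}|+3,10\}$ for $n=3$. The mechanism is to define $X_{ij}(u)$ for $\deg u=m+1$ via $\mathrm{ad}(T_{ik}(1,x))X_{ij}(v)$ and then verify that the degree-$(m{+}1)$ relations follow; the verification for small $n$ requires a ``permutation trick'' exploiting the elements $t(a,b)=T_{1j}(a,b)-T_{1j}(1,ba)$ and is considerably more delicate than a bare Jacobi manipulation. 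Your expectation that $n\geq 4$ is uniform is also off: $n=4$ is genuinely harder than $n\geq 5$ because the fourth index is consumed and the relevant brackets are only central, not zero.
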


The paper is organized as follows: Section \ref{sec conventions} provides several notational and conceptual conventions; Section \ref{sectionpres} offers detailed presentations of $\widehat{\mathfrak{sl}_n(R)}$, as well as certain technical preparations for proving the Theorem \ref{mainintro}; Section \ref{secfinpres} proves Theorem \ref{mainintro}, barring a crucial lemma;  the last three sections are devoted to the proof of this lemma.

\section{Conventions}\label{sec conventions}
This paper adheres to following conventions: all rings are unital and associative; $\ad(x)y$ stands for $[x,y]$; $k$ is always a commutative ring; all Lie algebras are perfect, guaranteeing the existence of their universal central extension.

\section{Presentations of $\widehat{\slnr}$}\label{sectionpres}

We take our first step toward theorem \ref{mainintro} by providing presentations for $\widehat{\slnr}$. These presentations use Steinberg Lie algebras $\stnr$ as a starting point . As a matter of fact, we tend to think of algebras $\widehat{\slnr}$ as central extensions of $\stnr$, while $\stnr$ being central extensions of $\slnr$. When $n\geq 5$, this point of view is clearly demonstrated in (see \cite{BlochLiealgebra} and \cite{Kassellodaycentral}):
\subsection{Steinberg Lie algebras}

In the study of universal central extensions of Chevalley groups, R.Steinberg introduced Steinberg  groups (see \cite[Chapter 6,7]{steinberg1967lectures}) using generators and commutation relations extracted from those of root subgroups of Chevalley groups.  A similar method allows one to define Steinberg Lie algebras (see \cite{BermanMoodyInventiones},\cite{faulkner1989barbilian}):

\begin{definition}\label{defst}
Let $n \geq 3$ be an integer, $R$ an associative $k$-algebra.  The Steinberg Lie algebra $\stnr$ is the Lie algebra generated by $\{\widehat{X}_{ij}(s) \mid s\in R, 1\leq i \neq j \leq n\}$,  subject to the relations 
\begin{align*} &\alpha\mapsto \widehat{X}_{ij}(\alpha) \text{ is a $k$-linear map,}\\
&[\widehat{X}_{ij}(\alpha), \widehat{X}_{jk}(\beta)] = \widehat{X}_{ik}(\alpha\beta), \text{ for distinct } i, j, k, \\
&[\widehat{X}_{ij}(\alpha), \widehat{X}_{kl}(\beta)] = 0, \text{ for } j\neq k, i\neq l,
\end{align*} for all $\alpha, \beta \in R$.
\end{definition} It is known that $\stnr$ extends $\slnr$ centrally, albeit not necessarily universally centrally.

\begin{theorem}\label{structure thm} If $n\geq 5$,  then $\phi: \stnr \to \mathfrak{sl}_n(R)$ gives the universal central extension 
	of $\slnr$. In other words, $  \stnr \cong  \widehat{\slnr}$.  
\end{theorem}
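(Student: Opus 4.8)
The plan is to prove this in two movements: first establish that the Steinberg Lie algebra $\stnr$ is a central extension of $\slnr$ (this is asserted right after Definition~\ref{defst}, and I would verify it explicitly), and then show that this particular central extension is the \emph{universal} one when $n\geq 5$. For the first movement, I would define the surjection $\phi\colon\stnr\to\slnr$ by sending each generator $\widehat{X}_{ij}(s)$ to the elementary matrix $sE_{ij}$ (i.e.\ $s$ in the $(i,j)$ entry). One checks that $sE_{ij}$ satisfies exactly the three defining relations of Definition~\ref{defst} inside $\slnr$, so $\phi$ is a well-defined Lie algebra homomorphism; it is surjective because the $sE_{ij}$ generate $\slnr$. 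That $\ker\phi$ is central is the content of the claim already granted in the excerpt, but I would want to see it: the key point is that $\stnr$ is perfect (each generator is a bracket $[\widehat{X}_{ik}(s),\widehat{X}_{kj}(1)]=\widehat{X}_{ij}(s)$ for a third index $k$, available since $n\geq 3$), and a standard commutator computation shows $\ker\phi$ lands in the center.

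The heart of the proof is the second movement: showing universality for $n\geq 5$. By the standard criterion (see \cite{Neher,WeibelHA}), a central extension $u\colon\mathcal{L}\to\mathfrak g$ of a perfect Lie algebra is universal if and only if $\mathcal L$ is itself perfect and \emph{centrally closed}, meaning every central extension of $\mathcal{L}$ splits; equivalently $H_2(\stnr,k)=0$ together with perfectness. The perfectness of $\stnr$ is the computation above. So the real work is to show that $\stnr$ admits no nontrivial central extension, or equivalently that every central extension $0\to\mathfrak c\to\mathcal E\xrightarrow{\pi}\stnr\to 0$ splits. I would do this by hand: choose lifts $x_{ij}(s)\in\mathcal E$ of the generators $\widehat{X}_{ij}(s)$, and attempt to adjust them to honest generators satisfying the Steinberg relations in $\mathcal E$. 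The mechanism is that in the Steinberg presentation each generator can be written as a commutator using a third index, and commutators in $\mathcal E$ of lifts are independent of the choice of lift (since the ambiguity is central and brackets kill the center). Defining $\tilde x_{ik}(s):=[x_{ij}(s),x_{jk}(1)]$ gives canonical lifts, and the hypothesis $n\geq 5$ provides enough spare indices to verify every defining relation of Definition~\ref{defst} holds exactly for these canonical lifts, with no central correction term surviving. This produces a splitting homomorphism $\stnr\to\mathcal E$.

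The step I expect to be the main obstacle is precisely the verification that the canonical lifts $\tilde x_{ij}(s)$ satisfy the relation $[\widehat{X}_{ij}(\alpha),\widehat{X}_{jk}(\beta)]=\widehat{X}_{ik}(\alpha\beta)$ and the vanishing relation $[\widehat{X}_{ij}(\alpha),\widehat{X}_{kl}(\beta)]=0$ \emph{on the nose} in $\mathcal E$, rather than merely up to a central element. This is where the index count $n\geq 5$ is consumed: to show a commutator bracket equals its intended value with no central defect, one rewrites it using auxiliary indices distinct from all indices already in play (a Jacobi-identity manipulation of the form $\widehat{X}_{ik}(\alpha\beta)=[[\,\cdot\,,\,\cdot\,],\,\cdot\,]$), and one needs roughly two indices beyond $\{i,j,k,l\}$ available at once. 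For $n=3,4$ this count fails, which is exactly why the theorem is stated only for $n\geq 5$ and why, as the introduction notes, $HC_1(R)$ is merely a direct summand of $H_2$ rather than all of it in the low-rank cases. The linearity relation transfers to the lifts without difficulty after the canonical choice is fixed, so the genuine bookkeeping is confined to the two commutation relations.
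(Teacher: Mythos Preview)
The paper does not actually prove Theorem~\ref{structure thm}; it is quoted from the literature (Bloch~\cite{BlochLiealgebra} and Kassel--Loday~\cite{Kassellodaycentral}) and stated without proof. Your proposal outlines precisely the standard argument found in those references: exhibit $\phi$ as a perfect central extension, then show $\stnr$ is centrally closed by picking canonical lifts $\tilde x_{ij}(s)=[x_{ik}(s),x_{kj}(1)]$ in an arbitrary central extension $\mathcal E\twoheadrightarrow\stnr$ and verifying the Steinberg relations hold on the nose. Your identification of the crux---that the vanishing relation $[\tilde x_{ij}(\alpha),\tilde x_{kl}(\beta)]=0$ for $\{i,j\}\cap\{k,l\}=\varnothing$ needs a fifth index to rewrite one factor as a commutator disjoint from the other---is exactly right, and is the reason the bound $n\geq 5$ appears. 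So your plan is correct and matches the literature; there is simply nothing in this paper to compare it against.
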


The glaring omission of $n=3$ or $4$ from Theorem \ref{structure thm} is not temporary. Indeed,  universal central extensions of these two exceptional cases are determined in \cite{GaoShang}: they are central extensions of $\mathfrak{st}_n(R)$, where the extension part (which is in general not zero)  is constructed from six copies of $R$. 

\subsection{$\widehat{\slnr}$ for $n=3$ and $n=4$}\label{slnrhat3and4}

Assuming $R$ to be a $k$-algebra that is also free as a $k$-module, the detailed structure of $\widehat{\slfr}$ and $\widehat{\sltr}$ is characterized as follows:

\begin{theorem}[See \cite{GaoShang}]\label{sl4hat} The universal central extension of $\slfr$ is a split central extension of $\stfr$ by $\mathcal{W}$, where $\mathcal{W}$ is the direct sum of six copies of $R_2:=\frac{R}{(2R+R[R,R])}$. The six copies are indexed by orbits of the 24 permutations of $\{1,2,3,4\}$ under the permutation action of the Klein four group $\{(1), (13),  (24), (13)(24)\}$ on the ordering, and we denote each of them by $\epsilon_{ijkl}$, where permutation of the subscripts $i,j,k,l$ under the Klein four group action as described before gives the same copy of $R_2$. In short, $\widehat{\slfr} \cong \stfr \bigoplus  (R_2)^6 $.
	
	$\widehat{\slfr}$ is generated by the symbols $\widehat{X}_{ij}(s)$ where $s \in R$ and $1 \leq i \neq j \leq 4$, and the abelian lie algebra $\mathcal{W}$ subject to the relations
	\begin{align} 
	&[ \mathcal{W}, \mathcal{W}] = [\widehat{X}_{ij}
	(a), \mathcal{W}] = 0,\\
	&a\mapsto \widehat{X}_{ij}(a) \text{ is a $K$-linear map,}\\
	&[\widehat{X}_{ij}(a), \widehat{X}_{jk}(b)] = \widehat{X}_{ik}(ab), \text{ for distinct } i, j, k, \\
	&[\widehat{X}_{ij}(a), \widehat{X}_{kl}(b)] = 0, \text{ for } j\neq k, i\neq l,\\
	&[\widehat{X}_{ij}(a), \widehat{X}_{kl}(b)] = \epsilon_{ijkl}(\overline{ab}), \text{ for } j, k, i, l,\text{ mutally distinct}
	\end{align}
	where $ a, b\in R,$ $ 1\leq i, j, k, l \leq 4$, with $\overline{a}$ denoting the image of $a$ under the projection map $R \rightarrow R_2$.
	
\end{theorem}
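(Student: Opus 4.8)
The plan is to reduce the statement to a computation of $H_2(\stfr,k)$. Because $\stfr\to\slfr$ is a central extension of perfect Lie algebras, the two algebras have the same universal central extension, so it suffices to identify $\widehat{\stfr}$. The guiding observation is that in the Steinberg presentation the only relation which is \emph{not} forced by the universal cover is the vanishing $[\widehat{X}_{ij}(a),\widehat{X}_{kl}(b)]=0$ in the case of mutually distinct indices; relaxing it so that such a commutator may take a nonzero central value is precisely what creates $\mathcal W$, and this is the content of replacing the vanishing relation (3.4) by relation (3.5) for all-distinct indices. For $n\ge 5$ these same commutators are instead killed in the cover, which is why Theorem \ref{structure thm} produces $\stnr\cong\widehat{\slnr}$ with no extra term.

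First I would take the presentation (3.1)--(3.5) as the definition of a Lie algebra $\mathcal L$ and check that it is a central extension of $\stfr$. The assignment $\widehat{X}_{ij}(s)\mapsto\widehat{X}_{ij}(s)$, $\mathcal W\mapsto 0$ is a well-defined surjection onto $\stfr$ (relation (3.5) maps to the genuine Steinberg relation $[\widehat{X}_{ij}(a),\widehat{X}_{kl}(b)]=0$), its kernel is $\mathcal W$, and $\mathcal W$ is central by (3.1); composing with $\stfr\to\slfr$ then realizes $\mathcal L$ as a central extension of $\slfr$. Moreover $\mathcal L$ is perfect, since every generator is a bracket (for instance $\widehat{X}_{ik}(s)=[\widehat{X}_{ij}(s),\widehat{X}_{jk}(1)]$ and $\epsilon_{ijkl}(\overline{ab})=[\widehat{X}_{ij}(a),\widehat{X}_{kl}(b)]$ for distinct indices), so its universal central extension exists and the remaining task is to pin down $\mathcal W$ and to verify universality.

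Identifying $\mathcal W$ is the heart of the matter. Writing $c_{ijkl}(a,b):=[\widehat{X}_{ij}(a),\widehat{X}_{kl}(b)]$ for mutually distinct indices, I would repeatedly decompose one factor through a third index---e.g.\ $\widehat{X}_{12}(a)=[\widehat{X}_{14}(a),\widehat{X}_{42}(1)]$ or $\widehat{X}_{34}(b)=[\widehat{X}_{31}(b),\widehat{X}_{14}(1)]$---and expand by Jacobi; all but one of the resulting terms vanish by (3.3) or (3.4), leaving an identity between two of the $c$'s. These identities realize exactly the action of the Klein four group $\{(1),(13),(24),(13)(24)\}$ on the ordering $ijkl$, collapsing the $24$ orderings to the claimed $6$ copies, with the antisymmetry $c_{ijkl}(a,b)=-c_{klij}(b,a)$ furnishing the $(13)(24)$ relation. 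Reading the same identities on coefficients forces the denominator of $R_2$: one decomposition together with the $(24)$-identification gives $\epsilon_{1234}(\overline{ab})=-\epsilon_{1234}(\overline{ab})$, i.e.\ $2\,\overline{ab}=0$, while the $(13)$-identification gives $\epsilon_{1234}(\overline{ab})=-\epsilon_{1234}(\overline{ba})$; combined, these yield $\overline{ab}=\overline{ba}$, and the analogous identities in which a factor is first split through a third index (so as to carry a ring product) force vanishing on the left ideal $R[R,R]$. Thus each $\epsilon_{ijkl}$ factors through $R_2=R/(2R+R[R,R])$, giving a surjection $(R_2)^6\twoheadrightarrow\mathcal W$; the reverse direction---that the six copies are independent and the coefficient module is no smaller than $R_2$---is where the hypothesis that $R$ is free over $k$ enters, by constructing the extension explicitly on $\stfr\oplus(R_2)^6$ and verifying (3.1)--(3.5) directly.

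Finally, to see that $\mathcal L$ is the \emph{universal} central extension I would show $H_2(\stfr,k)=\mathcal W$ and that $\mathcal L$ is the corresponding cover, equivalently $H_2(\mathcal L,k)=0$. Via the Hopf-type formula $H_2=(S\cap[F,F])/[F,S]$ for a free presentation $\stfr=F/S$, one checks that the multiplicative relations (3.3) and the overlapping-index vanishing relations are absorbed modulo $[F,S]$---the same mechanism that gives $H_2(\stnr,k)=0$ for $n\ge5$---so that $H_2(\stfr,k)$ is generated precisely by the classes of the all-distinct commutators $c_{ijkl}(a,b)$, which by the previous paragraph span exactly $(R_2)^6$. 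The main obstacle is this bookkeeping: one must verify that no further Jacobi relation among the $c_{ijkl}$ either collapses the six copies or shrinks the coefficients below $R_2$, and that the $c_{ijkl}$ are central in $\mathcal L$ so that no nonsplit central extension survives. The essential reason $n=4$ is exceptional---and the crux of the whole computation---is that, unlike $n\ge5$, there is no fifth index available with which to rewrite $c_{ijkl}$ and thereby place it in $[F,S]$, so the central term genuinely persists.
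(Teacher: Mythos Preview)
The paper does not prove this theorem: it is quoted verbatim from \cite{GaoShang} and used as input for the rest of the argument. There is therefore no ``paper's own proof'' to compare against; the closest the paper comes is Lemma~\ref{4pres}, which \emph{assumes} Theorem~\ref{sl4hat} and runs your Jacobi manipulations in reverse to extract a leaner presentation. The identities you describe---symmetry in the two arguments, Klein-four invariance of the index pattern, the shift $\epsilon_{ijkl}(\overline{ab\cdot c})=\epsilon_{ijkl}(\overline{a\cdot bc})$, and the $2$-torsion---are exactly items (1)--(4) proved there.

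As a proof sketch your outline is sound in spirit but hand-waves the two hard points. First, the derivation of $2\overline{ab}=0$ is not quite as you state: it does not come from comparing the $(24)$-identification with antisymmetry, but from bracketing the (already central) element $c_{ijkl}(a,b)$ with a diagonal element $T_{ij}(1,1)$ and using $[T_{ij}(1,1),\widehat X_{ij}(a)]=\widehat X_{ij}(2a)$; this is how the paper does it in Lemma~\ref{4pres}(4). Second, and more seriously, the universality step is where all the content lies. Saying that the multiplicative and overlapping-index relations ``are absorbed modulo $[F,S]$'' is precisely the nontrivial computation, and it is \emph{not} parallel to the $n\ge 5$ case---the whole point is that with only four indices you cannot route every relator through a fifth index, so one must separately track which classes survive and show they are exactly the $c_{ijkl}$. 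Likewise, the explicit cocycle on $\stfr\oplus(R_2)^6$ that witnesses injectivity (and is where freeness of $R$ over $k$ is used) is asserted but not constructed. Both of these are carried out in \cite{GaoShang}; your sketch correctly locates them but does not supply them.
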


\begin{theorem}[See \cite{GaoShang}]\label{sl3hat}

The universal central extension of $\sltr$ is a split central extension of $\sttr$ by $\mathcal{W}$: here $\mathcal{W}$ is the direct sum of six copies of $R_3:=\frac{R}{(3R+R[R,R])}$, where the six copies are indexed by subscripts $\{ijpq \mid 1\leq i,j,p,q\leq 3 , pq=ik \mbox{ or } kj \}$, and we denote each of them by $\epsilon_{ijkl}$ . In short, $\widehat{\sltr} \cong \sttr \bigoplus  (R_3)^6 $, 

This gives rise to the following description: $\widehat{\sltr}$ is generated by the symbols $\widehat{X}_{ij}(s)$ where $s \in R$ and $1 \leq i \neq j \leq 3$, and the abelian lie algebra $\mathcal{W}$ subject to the relations
\begin{align} 
&[ \mathcal{W}, \mathcal{W}] = [\widehat{X}_{ij}
(a), \mathcal{W}] = 0,\\
&a\mapsto \widehat{X}_{ij}(a) \text{ is a $K$-linear map,}\\
&[\widehat{X}_{ij}(a), \widehat{X}_{jk}(b)] = \widehat{X}_{ik}(ab), \text{ for distinct } i, j, k, \\
&[\widehat{X}_{ij}(a), \widehat{X}_{ij}(b)] = 0, \\
&[\widehat{X}_{ij}(a), \widehat{X}_{kl}(b)] = \epsilon_{ijkl}(\overline{ab}), \text{ for } i=k \mbox{ or } j=l.
\end{align}
where $ a, b\in R,$ $ 1\leq i, j, k, l \leq 3$, with $\overline{a}$ denoting the image of $a$ under the projection map $R \rightarrow R_3$.
\end{theorem}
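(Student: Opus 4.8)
\emph{Strategy and reduction.} The plan is to reduce the statement to a homology computation for the Steinberg Lie algebra and then pin that homology down by a lifting argument. The starting point is the standard fact that a perfect Lie algebra and any perfect central extension of it have \emph{the same} universal central extension. Since $\sttr$ is perfect (each generator $\widehat{X}_{ik}(a) = [\widehat{X}_{ij}(a), \widehat{X}_{jk}(1)]$ is a commutator) and $\sttr \to \sltr$ is central, we get $\widehat{\sltr} \cong \widehat{\sttr}$. Thus it suffices to identify the universal central extension of $\sttr$: concretely, to show that its kernel, which is $H_2(\sttr, k)$, is isomorphic to six copies of $R_3 = R/(3R + R[R,R])$, and to read off the bracket.

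\emph{The candidate and its basic properties.} First I would take the algebra $\mathcal{L}$ presented in the statement as the candidate, and check consistency of the relations, which amounts to verifying the Jacobi identity on triples of generators using that $\mathcal{W}$ is central. The map $\pi\colon \mathcal{L} \to \sttr$ that kills $\mathcal{W}$ is then a well-defined surjection with central kernel $\mathcal{W}$: the only new relations $[\widehat{X}_{ij}(a), \widehat{X}_{kl}(b)] = \epsilon_{ijkl}(\overline{ab})$ occur exactly when $i = k$ or $j = l$, and these are precisely the brackets forced to vanish in $\sttr$ by the Steinberg relations (for $n=3$ the only genuinely ``vanishing'' brackets sharing one index are of this form). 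Since each $\epsilon_{ijkl}(\overline{ab})$ is by definition a commutator, $\mathcal{L}$ is perfect, so $\pi$ is a perfect central extension.

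\emph{Universality (upper bound).} The core of the argument is to show $\pi$ is \emph{universal}. I would take an arbitrary central extension $0 \to \mathfrak{c} \to \mathcal{M} \xrightarrow{\psi} \sttr \to 0$, choose $k$-linear lifts $\widetilde{X}_{ij}(s)$ of the Steinberg generators, and measure the failure of the defining relations: both the associativity defects $[\widetilde{X}_{ij}(a), \widetilde{X}_{jk}(b)] - \widetilde{X}_{ik}(ab)$ and the would-be-vanishing brackets $[\widetilde{X}_{ij}(a), \widetilde{X}_{il}(b)]$ land in the central ideal $\mathfrak{c}$. Running the Jacobi identity through the triangle $1 \to 2 \to 3 \to 1$ — in particular bracketing these lifts with the toral elements $[\widetilde{X}_{ij}, \widetilde{X}_{ji}]$ — forces the resulting central maps $R \to \mathfrak{c}$ to annihilate $[R,R]$ (from the symmetrization $\overline{ab} = \overline{ba}$) and to satisfy a further relation of the form $3\,\overline{ab} = 0$ coming from the three-fold cyclic symmetry of the three root subgroups of the $A_2$ system. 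This is exactly the passage to $R_3$, and it produces a homomorphism $\mathcal{L} \to \mathcal{M}$ over $\sttr$, unique because $\mathcal{L}$ is perfect; hence $\pi$ is the universal central extension and $\mathcal{W}$ is a quotient of $(R_3)^6$.

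\emph{Main obstacle (lower bound) and conclusion.} The step I expect to be hardest is the matching \emph{lower} bound: one must show $\mathcal{W}$ does not collapse, i.e. that the six cocycles are linearly independent in $H_2(\sttr, k)$ so that $\mathcal{W}$ is all of $(R_3)^6$. For this I would construct, for each of the six index directions, an explicit $2$-cocycle on $\sttr$ (equivalently an explicit central extension) realizing a full nonzero copy of $R_3$, and verify nontriviality by evaluating on concrete pairs of generators whose bracket the Steinberg relations alone cannot annihilate. The delicate point is the simultaneous bookkeeping of all six directions together with the toral contributions, where collapses could a priori occur. Once both bounds are established we obtain $\widehat{\sttr} \cong \mathcal{L}$; choosing a $k$-linear section of $\pi$ then exhibits the asserted decomposition $\widehat{\sltr} \cong \sttr \oplus (R_3)^6$ at the level of $k$-modules, with the bracket given by the Steinberg bracket corrected by the central cocycle $\epsilon_{ijkl}$.
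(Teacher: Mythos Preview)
The paper does not prove this theorem: it is quoted verbatim from the reference \cite{GaoShang} (note the attribution ``See \cite{GaoShang}'' in the theorem header), and no argument for it appears anywhere in the text. The paper \emph{uses} this result as a black box---most notably in Lemma~\ref{3pres}, where the presentation from Theorem~\ref{sl3hat} is streamlined by showing that the central generators $\mathcal{W}$ can be dropped once one imposes that the brackets $[\widehat{X}_{ij}(a),\widehat{X}_{kl}(b)]$ with $i=k$ or $j=l$ are central. So there is nothing in this paper to compare your proposal against.

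That said, your outline is a sensible plan for how the Gao--Shang argument (or any proof of this statement) must go: reduce to $H_2(\sttr,k)$ via perfectness and the shared universal central extension, bound $H_2$ from above by running Jacobi identities on lifted generators, and bound it from below by exhibiting explicit cocycles. One small caution on the upper-bound step: the symmetry $\epsilon(a,b)=\epsilon(b,a)$ together with the shift $\epsilon(ab,c)=\epsilon(a,bc)$ gives only that the induced map $R\to\mathfrak{c}$ is trace-like (i.e., kills the additive subgroup $[R,R]$); getting the full ideal $R[R,R]$ and the factor of $3$ both killed requires a more careful traversal of the $A_2$ root system, and this is exactly the sort of computation carried out in \cite{GaoShang}. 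Your sketch signals awareness of this but does not pin it down, which is fine for a sketch but would need to be filled in for an actual proof.
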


\subsection{``Diagonal'' elements in $\widehat{\slnr}$}

Adjoint actions of $T_{ij} \in \widehat{\mathfrak{sl}_n(R)}$ will be used repeatedly in this paper. These elements are to be understood as``diagonal''  , as they are specific lifts of the diagonal elements of ${\mathfrak{sl}_n(R)}$.

\begin{notation}\label{def of Tij}
	Denote in $\widehat{\mathfrak{sl}_n(R)}$ the element $[ \widehat{X}_{ij}(a), \widehat{X}_{ji}(b)]$ by $T_{ij}(a,b)$, and  by $t(a, b)$  the element  $T_{1j}(a, b) - T_{1j}(1, ba)$. The definition of $t(a,b)$ is independent of $j$. We define similarly $t_i(a,b)$ as the element $T_{ij}(a, b) - T_{ij}(1, ba)$, any $1\leq i \neq j\leq n$.
\end{notation}

To use these elements effectively, we collect below a pool of formulas: 
\begin{align} & T_{ij}(a, b)= -T_{ji}(b, a) \notag\\
&[T_{ij}(a, b), \widehat{X}_{kl}(c)]=0 \text{ for distinct }i, j, k, l\notag\\
&[T_{ij}(a, b), \widehat{X}_{ik}(c)]= \widehat{X}_{ik}(abc), \ \ [T_{ji}(b, a), \widehat{X}_{ki}(c)]= \widehat{X}_{ki}(cab) \notag \\
&[T_{ij}(a, b), \widehat{X}_{ij}(c)]= \widehat{X}_{ij}(abc + cba) \notag 
\\ &[t(a, b), \widehat{X}_{1i}(c)]= \widehat{X}_{1i}((ab-ba)c), \ \ [t(a, b), \widehat{X}_{i1}(c)] = -\widehat{X}_{i1}(c(ab-ba))\notag \\
&[t(a, b), \widehat{X}_{jk}(c)]=0
\text{ for }j, k\geq 2 \label{t equations}\\
& t(a,b) \mbox{ is central if $R$ is commutative} \notag
\end{align}
These equalities are easily verifiable in $\slnr$. In the case of universal central extensions, they follow from presentations of $\widehat{\mathfrak{sl}_n({R})}$, given as in Definition \ref{defst}, Theorems \ref{sl4hat} and \ref{sl3hat}.

\subsection{Improving the presentation of $\widehat{\slnr}$}

\begin{lemma}\label{4pres}
	For $R$ an associative $k$-algebra that is also a free module over $k$, $ \widehat{\slfr}$, as a $k$-Lie algebra, is generated by $\widehat{X}:=\{\widehat{X}_{ij}(s) \mid s\in R, 1\leq i \neq j \leq 4\}$,  subject to the relations
	\begin{align*} 
	&a\mapsto \widehat{X}_{ij}(a) \text{ is a $k$-linear map,}\\
	&[\widehat{X}_{ij}(a), \widehat{X}_{jk}(b)] = \widehat{X}_{ik}(ab), \text{ for distinct } i, j, k, \\
	&[\widehat{X}_{ij}(a), \widehat{X}_{ij}(b)] = 0, [\widehat{X}_{ij}(a), \widehat{X}_{il}(b)] = 0,  \   [\widehat{X}_{ji}(a), \widehat{X}_{li}(b)] = 0  \ \text{ for } j\neq i\neq l,\\
		&[\boldsymbol{w}, [\widehat{X}_{ij}(a), \widehat{X}_{kl}(b)]] = 0, \text{ for } j, k, i, l\text{  mutally distinct, } \text{ for all }  \boldsymbol{w} \in \widehat{X},
	\end{align*}	where $ a, b\in R,$ $ 1\leq i, j, k, l \leq 4$.
\end{lemma}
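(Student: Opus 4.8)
The plan is to let $\mathfrak{L}$ denote the $k$-Lie algebra \emph{defined} by the generating set $\widehat{X}$ together with the four families of relations in the statement, and to prove that the tautological map $\pi\colon \mathfrak{L}\to\widehat{\slfr}$, sending each $\widehat{X}_{ij}(s)$ to itself, is an isomorphism. First I would verify that all four relations actually hold in $\widehat{\slfr}$: the first three are immediate from the Steinberg relations of Definition \ref{defst}, which hold in $\widehat{\slfr}$ by Theorem \ref{sl4hat}, while the fourth holds because, by Theorem \ref{sl4hat}, the distinct-index bracket $[\widehat{X}_{ij}(a),\widehat{X}_{kl}(b)]=\epsilon_{ijkl}(\overline{ab})$ lies in the \emph{central} summand $\mathcal{W}$ and hence commutes with every generator. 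This produces the homomorphism $\pi$, and $\pi$ is surjective because the $\widehat{X}_{ij}(s)$ generate $\stfr$ while their distinct-index brackets generate $\mathcal{W}$.

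The guiding observation is that the presentation of Lemma \ref{4pres} differs from the one of Theorem \ref{sl4hat} only in that the precise value of the distinct-index brackets (and the explicit generators $\mathcal{W}$) has been replaced by the weaker assertion that those brackets are central. Accordingly I would introduce the $k$-subspace $Z\subseteq\mathfrak{L}$ spanned by all $[\widehat{X}_{ij}(a),\widehat{X}_{kl}(b)]$ with $i,j,k,l$ mutually distinct. The fourth relation says exactly that these elements commute with every generator, and since commuting with all generators forces commuting with all of $\mathfrak{L}$ (by the Jacobi identity and induction on bracket length), $Z$ is central in $\mathfrak{L}$; in particular it is an ideal.

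Next I would identify the quotient $\mathfrak{L}/Z$. In $\mathfrak{L}/Z$ every distinct-index bracket vanishes, and a finite case check shows that this, combined with the first three relations, yields precisely the defining relations of $\stfr$: the Steinberg vanishing relation $[\widehat{X}_{ij}(a),\widehat{X}_{kl}(b)]=0$ for $j\neq k$ and $i\neq l$ breaks into the equal-index, same-row and same-column cases recorded in the third relation of the statement, together with the mutually-distinct case, which is absorbed into $Z$. This gives a surjection $\stfr\twoheadrightarrow\mathfrak{L}/Z$. On the other hand, composing $\pi$ with the projection $\widehat{\slfr}\to\stfr$ of Theorem \ref{sl4hat} (which kills $\mathcal{W}\supseteq\pi(Z)$) furnishes a map $\mathfrak{L}/Z\to\stfr$ whose composite with the surjection is the identity on generators. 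Hence $\stfr\xrightarrow{\sim}\mathfrak{L}/Z$, and in particular the induced map $\mathfrak{L}/Z\to\stfr$ is injective.

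Finally I would conclude by a central-extension argument. Since $\mathfrak{L}/Z\to\stfr$ is injective, any $x\in\ker\pi$ already lies in $Z$; as $Z$ is central, $\ker\pi$ is central, so $\pi$ is a \emph{central} extension. Moreover $\mathfrak{L}$ is perfect, because each generator $\widehat{X}_{ij}(s)=[\widehat{X}_{ik}(s),\widehat{X}_{kj}(1)]$ is a bracket. Now $\widehat{\slfr}$, being a universal central extension, is centrally closed (its $H_2$ vanishes), and a central extension with perfect total space of a centrally closed Lie algebra is forced to be an isomorphism; therefore $\pi$ is an isomorphism, which is the assertion of the lemma. The main obstacle is the identification $\mathfrak{L}/Z\cong\stfr$: one must check carefully that nothing beyond the three explicit relations and the centrality of $Z$ survives in the quotient, so that the only possible discrepancy between $\mathfrak{L}$ and $\widehat{\slfr}$ is central. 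Once this is in place, the centrally closed property of $\widehat{\slfr}$ eliminates that discrepancy \emph{without} ever having to compute $Z$ explicitly as $(R_2)^6$.
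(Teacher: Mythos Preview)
Your argument is correct, and its overall architecture is the same as the paper's: build the surjection $\pi:\mathfrak{L}\to\widehat{\slfr}$, show that $\ker\pi$ is central, and finish by invoking central closedness of $\widehat{\slfr}$. The only difference lies in how centrality of $\ker\pi$ is reached. The paper first derives, inside $\mathfrak{L}$, four explicit identities among the distinct-index brackets $[\widehat{X}_{ij}(a),\widehat{X}_{kl}(b)]$ (symmetry in the two arguments, Klein-four invariance of the index pattern, dependence only on the product $ab$, and $2$-torsion), mirroring the structure of $\epsilon_{ijkl}(\overline{ab})\in(R_2)^6$ from Theorem~\ref{sl4hat}, and then asserts the central-extension property. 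Your route via the identification $\mathfrak{L}/Z\cong\stfr$ sidesteps those computations entirely: once $Z$ is central by the fourth relation and the quotient by $Z$ is already $\stfr$, the kernel of $\pi$ is forced into $Z$ by the commuting square with $\widehat{\slfr}\to\stfr$. This is more conceptual and shows that the paper's identities (1)--(4), while true and instructive, are not actually needed to prove the lemma.
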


\begin{proof}
	Denote by $\mathfrak{g}$ the Lie algebra defined as in the statement. Comparing with Theorem \ref{sl4hat}, it is clear that we need to derive the relations (1)-(4) below from the defining relations. We have (1) $[\widehat{X}_{ij}(a), \widehat{X}_{kl}(b)]=[\widehat{X}_{ij}(b), \widehat{X}_{kl}(a)]$  for distinct $i,j,k,l$: indeed, \begin{align*}
	[\widehat{X}_{ij}(a), \widehat{X}_{kl}(b)]&=[[\widehat{X}_{ik}(1),[\widehat{X}_{kl}(a),\widehat{X}_{lj}(1)]] ,\widehat{X}_{kl}(b)]\\
	&=[[\widehat{X}_{ik}(1),\widehat{X}_{kl}(b)], [\widehat{X}_{kl}(a),\widehat{X}_{lj}(1)]]+[\widehat{X}_{ik}(1),[[\widehat{X}_{kl}(a),\widehat{X}_{lj}(1)],\widehat{X}_{kl}(b)]\\
	&=[[\widehat{X}_{ik}(1),\widehat{X}_{kl}(b)], [\widehat{X}_{kl}(a),\widehat{X}_{lj}(1)]]	\\
	&=[\widehat{X}_{il}(b), [\widehat{X}_{kl}(a),\widehat{X}_{lj}(1)]]=0+[\widehat{X}_{kl}(a),[\widehat{X}_{il}(b), \widehat{X}_{lj}(1)]]\\
	&=[\widehat{X}_{kl}(a),\widehat{X}_{ij}(b)].
	\end{align*}
	
	(2) The expression $[\widehat{X}_{ij}(a), \widehat{X}_{kl}(b)]$, where  $i,j,k,l$ are distinct, is invariant under permutation of $i,j,k,l$ under the Klein four group: indeed , without loss of generality, we  set $i,j,k,l$ to be equal to $1,2,3,4 $, respectively. Thanks to (1), it suffices for us to show that $[\widehat{X}_{12}(a), \widehat{X}_{34}(b)]=[\widehat{X}_{14}(a), \widehat{X}_{32}(b)]$, which is clear as $[\widehat{X}_{12}(a), \widehat{X}_{34}(b)]=[[\widehat{X}_{14}(a), \widehat{X}_{42}(1)] ,\widehat{X}_{34}(b)]=0+[\widehat{X}_{14}(a),[\widehat{X}_{42}(1) ,\widehat{X}_{34}(b)]]=[\widehat{X}_{14}(a), \widehat{X}_{32}(b)]$.

	(3) We have $[\widehat{X}_{ij}(ab), \widehat{X}_{kl}(c)]=[\widehat{X}_{ij}(a), \widehat{X}_{kl}(bc)]$: indeed,
	\begin{align*}
	[\widehat{X}_{ij}(ab), \widehat{X}_{kl}(c)]&=[[\widehat{X}_{ik}(a),\widehat{X}_{kj}(b)], \widehat{X}_{kl}(c)]\\
	&=[[\widehat{X}_{il}(ac), \widehat{X}_{kj}(b)]+0\\
	&=[[\widehat{X}_{ij}(a),\widehat{X}_{jl}(c)],\widehat{X}_{kj}(b)]	\\
	&=0+[\widehat{X}_{ij}(a),[\widehat{X}_{jl}(c), \widehat{X}_{kj}(b)]]=[\widehat{X}_{ij}(a),\widehat{X}_{kl}(bc)].
	\end{align*}
	
	(4) We have $2[\widehat{X}_{ij}(a), \widehat{X}_{kl}(b)]=0$ for $i,j,k,l$ distinct:
	
	This is because  $0=[[\widehat{X}_{ij}(1),\widehat{X}_{ji}(1)] , [\widehat{X}_{ij}(a), \widehat{X}_{kl}(b)]]=[\widehat{X}_{ij}(2a), \widehat{X}_{kl}(b)]+0=2[\widehat{X}_{ij}(a), \widehat{X}_{kl}(b)].$
	
	Defining relations of $\mathfrak{g}$ implies that it surjects onto $\widehat{\mathfrak{sl}_4({R})}$,  through $\widehat{X}_{ij}(s) \mapsto \widehat{X}_{ij}(s)$, and that $\mathfrak{g}$ is a central extension of $\widehat{\mathfrak{sl}_4({R})}$ under the same map. Thus $\mathfrak{g}$ and $\widehat{\mathfrak{sl}_4({R})}$ are forced isomorphic, for universal central extensions are centrally closed (see \cite[Theorem 1.8]{Neher}).\end{proof}

\begin{lemma}\label{3pres}
	For an associative $k$-algebra $R$ that is a free module over $k$, the $k$-Lie algebra,$ \widehat{\sltr}$,  is presented by generators $\widehat{X}=\{\widehat{X}_{ij}(s) \mid s\in R, 1\leq i \neq j \leq 3\}$ and relations
	\begin{align*} 
	&a\mapsto \widehat{X}_{ij}(a) \text{ is a $k$-linear map,}\\
	&[\widehat{X}_{ij}(a), \widehat{X}_{jk}(b)] = \widehat{X}_{ik}(ab), \text{ for distinct } i, j, k, \\
	&[\widehat{X}_{ij}(a), \widehat{X}_{ij}(b)] = 0.\\
		&[\boldsymbol{w}, [\widehat{X}_{ij}(a), \widehat{X}_{kl}(b)]] = 0, \text{ for all }    \boldsymbol{w} \in \widehat{X}, \  i=k \text{ or } j=l.
	\end{align*}
	where $ a, b\in R,$ $ 1\leq i, j, k, l \leq 3$.
\end{lemma}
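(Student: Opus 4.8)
The plan is to follow the template of Lemma~\ref{4pres}, adapting the key numerical input from $2$ to $3$. Denote by $\mathfrak{g}$ the Lie algebra presented as in the statement. Since each generator is a bracket $\widehat{X}_{ik}(s)=[\widehat{X}_{ij}(s),\widehat{X}_{jk}(1)]$, the algebra $\mathfrak{g}$ is perfect. All four families of defining relations hold in $\widehat{\sltr}$ — the last one because, by Theorem~\ref{sl3hat}, the brackets $[\widehat{X}_{ij}(a),\widehat{X}_{kl}(b)]$ with $i=k$ or $j=l$ equal $\epsilon_{ijkl}(\overline{ab})\in\mathcal{W}$, which is central. Hence there is a surjection $\pi\colon\mathfrak{g}\to\widehat{\sltr}$ fixing the generators, and the goal is to show $\ker\pi$ is central; the isomorphism then follows since $\widehat{\sltr}$ is perfect and centrally closed (\cite[Theorem 1.8]{Neher}).

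Write $z_1(a,b):=[\widehat{X}_{12}(a),\widehat{X}_{13}(b)]$ for the ``degenerate'' bracket attached to row $1$. By the last defining relation every such bracket is killed by all generators, hence, by the Jacobi identity and induction on bracket length (as $\mathfrak{g}$ is generated by $\widehat{X}$), is central in $\mathfrak{g}$. First I would record two structural identities, each a short Jacobi computation using only the first three relations. Expanding $\widehat{X}_{13}(b)=[\widehat{X}_{12}(b),\widehat{X}_{23}(1)]$ and using $[\widehat{X}_{12}(a),\widehat{X}_{12}(b)]=0$ gives the symmetry $z_1(a,b)=z_1(b,a)$; expanding $\widehat{X}_{12}(ab)=[\widehat{X}_{13}(a),\widehat{X}_{32}(b)]$ gives $z_1(ab,c)=z_1(cb,a)$, which combined with the symmetry yields $z_1(ab,c)=z_1(a,cb)$. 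Setting $c=1$ shows $z_1(a,b)=z_1(ab,1)$, so there is a well-defined $k$-linear map $\epsilon_1\colon R\to\mathfrak{g}$, $\epsilon_1(x):=z_1(x,1)$, with $z_1(a,b)=\epsilon_1(ab)$ and $\epsilon_1(ab)=\epsilon_1(ba)$.

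Next I would pin down the module through which $\epsilon_1$ factors. Because $z_1$ is central, $0=[T_{12}(1,1),z_1(a,b)]$; expanding by Jacobi and inserting $[T_{12}(1,1),\widehat{X}_{12}(a)]=\widehat{X}_{12}(2a)$ and $[T_{12}(1,1),\widehat{X}_{13}(b)]=\widehat{X}_{13}(b)$ from \eqref{t equations} produces $3\,z_1(a,b)=0$, so $3\epsilon_1(x)=0$. This is exactly where $n=3$ replaces the factor $2$ of Lemma~\ref{4pres}: here the second root vector shares the row index of $T_{12}$, contributing an extra $1$. Similarly $0=[t(a,b),z_1(c,d)]$ together with $[t(a,b),\widehat{X}_{1i}(c)]=\widehat{X}_{1i}((ab-ba)c)$ gives $\epsilon_1((ab-ba)cd)+\epsilon_1(c(ab-ba)d)=0$; taking $d=1$ and using symmetry yields $2\epsilon_1(r[a,b])=0$, whence $\epsilon_1(r[a,b])=0$ since $3\epsilon_1=0$ and $\gcd(2,3)=1$. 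Thus $\epsilon_1$ kills $3R+R[R,R]$ and factors through $R_3$. The remaining five families (the other two row families, via the analogous $T$ and $t$, and the three column families, via the transpose of these relations) are handled identically.

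Finally I would assemble the pieces. The degenerate brackets span a central ideal $\mathcal{W}'\subseteq\mathfrak{g}$, and modulo $\mathcal{W}'$ the defining relations collapse to precisely the Steinberg relations of Definition~\ref{defst} — in $n=3$ the vanishing of all degenerate brackets is exactly the relation $[\widehat{X}_{ij}(a),\widehat{X}_{kl}(b)]=0$ for $j\neq k,\ i\neq l$ — so $\mathfrak{g}/\mathcal{W}'\cong\sttr$. Since $\pi$ carries $\mathcal{W}'$ into $\mathcal{W}$ and induces the identity on $\sttr=\widehat{\sltr}/\mathcal{W}$, any element of $\ker\pi$ lies in $\mathcal{W}'$ and is therefore central. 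Hence $\pi$ is a central extension of the perfect, centrally closed algebra $\widehat{\sltr}$, forcing $\pi$ to be an isomorphism. I expect the main obstacle to be the bookkeeping in the middle paragraphs: getting the coefficient in the $T$-computation to come out to $3$ rather than $2$, and squeezing out the full $R[R,R]$ (not merely $[R,R]$) from the $t$-action, where the interplay of the symmetry identity with the coprimality of $2$ and $3$ is essential.
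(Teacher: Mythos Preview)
Your proof is correct and follows essentially the same route as the paper's: both derive the symmetry $z_1(a,b)=z_1(b,a)$, the transfer identity reducing $z_1(a,b)$ to a function of the product $ab$, and the key $3\cdot z_1(a,b)=0$ relation (the paper's items (1)--(3)), and both conclude by the universality/central-closedness argument. Your final paragraph, spelling out explicitly that $\mathfrak{g}/\mathcal{W}'\cong\sttr$ and hence $\ker\pi\subseteq\mathcal{W}'$, is in fact more transparent than the paper's terse ``similar universality argument wraps up the proof''. One remark: your middle paragraph establishing $\epsilon_1(R[R,R])=0$ via the $t(a,b)$-action is superfluous for the argument you actually give, since the centrality of $\ker\pi$ follows already from $\mathfrak{g}/\mathcal{W}'\cong\sttr$ together with the fact that $\mathcal{W}'$ is central by fiat---you never need to know that $\epsilon_1$ factors through $R_3$ to conclude.
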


\begin{proof}
	Similarly to Lemma \ref{4pres}, we need to derive several relations from the relations present. Only the ones involving $[\widehat{X}_{ij}, \widehat{X}_{ik}]$ will be tackled: the remaining ones follow by symmetry.
	
	(1) $[\widehat{X}_{ij}(a), \widehat{X}_{ik}(b)]=[\widehat{X}_{ij}(b), \widehat{X}_{ik}(a)]$  for $i,j,k$ distinct: 
	\begin{align*}
	[\widehat{X}_{ij}(a), \widehat{X}_{ik}(b)]&=[[\widehat{X}_{ik}(1),\widehat{X}_{kj}(a)] ,\widehat{X}_{ik}(b)] =0+ [\widehat{X}_{ij}(ba) ,\widehat{X}_{ik}(1)]\\
	&=[\widehat{X}_{ij}(ba), \widehat{X}_{ik}(1)]\\
	&=
	[[\widehat{X}_{ik}(ba),\widehat{X}_{kj}(1)], \widehat{X}_{ik}(1)] =0+[\widehat{X}_{ij}(1), \widehat{X}_{ik}(ba)]\\ 	&=[\widehat{X}_{ij}(1), [\widehat{X}_{ij}(b),\widehat{X}_{jk}(a)]]=0+[   \widehat{X}_{ij}(b),\widehat{X}_{ik}(a)]\\
	&=[   \widehat{X}_{ij}(b),\widehat{X}_{ik}(a)]
	\end{align*}

	(2) $[\widehat{X}_{ij}(ab), \widehat{X}_{ik}(c)]=[\widehat{X}_{ij}(a), \widehat{X}_{ik}(bc)]$:
	\begin{align*}
	[\widehat{X}_{ij}(ab), \widehat{X}_{ik}(c)]&=[ \widehat{X}_{ij}(ab),[\widehat{X}_{ij}(1),\widehat{X}_{jk}(c)]]\\
	&=0+[\widehat{X}_{ij}(1), \widehat{X}_{ik}(abc)]\\
	&=[ \widehat{X}_{ij}(a),[\widehat{X}_{ij}(1),\widehat{X}_{jk}(bc)]]\\
	&= [\widehat{X}_{ij}(a), \widehat{X}_{ik}(bc) ]
	\end{align*}
	
	(3) $3[\widehat{X}_{ij}(a), \widehat{X}_{ik}(b)]=0$  :
	
	This is because  $0=[T_{ij}(1,1),[\widehat{X}_{ij}(a), \widehat{X}_{il}(b)]]=[\widehat{X}_{ij}(2a), \widehat{X}_{il}(b)]+[\widehat{X}_{ij}(a), \widehat{X}_{il}(b)]=3[\widehat{X}_{ij}(a), \widehat{X}_{il}(b)].$
	
	As in Lemma \ref{4pres}, a similar universality argument wraps up the proof.\end{proof}

\subsection{Generating $\widehat{X}_{ij}(R)$ through commutators}

 Let $R$ be generated by a finite set  $\mathcal{X}=\{x_i\}$ (where $x_0=1$) . It is clear through the presentations of $\widehat{\mathfrak{sl}_n(R)}$'s above that $\widehat{X}_{ij}(s)$ (where $s=\alpha\beta$ is a monomial ) is presentable as a single commutator involving $\alpha$ and $\beta$, each a monomial of lower degree(in terms of $\mathcal{X}$). In other words, $[\widehat{X}_{ij}(\alpha), \widehat{X}_{jk}(\beta)] = \widehat{X}_{ik}(\alpha\beta)$ is our {\textit{standard}} way of generating $\widehat{X}_{ij}(R)$.
 
As it turns out,  difficulty of finite presentation proofs in the $n=3$ and $4$ cases stems from the lack of enough indices(i.e. the size of $n$). So we might as well resort to another way to present $\widehat{X}_{ij}(s)$ through iterative commutators, involving less indices. 

According to the batch of equations \ref{t equations}, we can obtain $\widehat{X}_{ij}(s)$ through\begin{align}\label{bettergen}
{\centering \begin{matrix}
	[T_{ik}(\alpha, 1), \widehat{X}_{ij}(\beta)]= \widehat{X}_{ij}(\alpha\beta) & \mbox{ or } &   [T_{kj}(\beta, 1), \widehat{X}_{ij}(\alpha)]= \widehat{X}_{ij}(\alpha\beta).
	\end{matrix}}
\end{align}

We remind the reader that \textbf{\textit{when $R$ is commutative}}, an even better presentation of $\widehat{X}_{ij}(s)$ appears (when $2$  is invertible in $k$) :\begin{align}
{\centering 
	[T_{ij}(\alpha, 1), \widehat{X}_{ij}(\beta)]= \widehat{X}_{ij}(2\alpha\beta).}
\end{align} However we shall not make use of this formula in this paper.

This discussion may be concluded with the slogan ``less indices enables more universal definitions''.

\section{Toward Finite Presentation of $\widehat{\slnr}$}\label{secfinpres}

Recall that $k$ is an arbitrary unital commutative ring. From now on we adopt for the ``root subspaces'' the notation ${X}_{ij}$ instead of $\widehat{X}_{ij}$ for uniformity and simplicity. 

Let us first restate Theorem \ref{mainintro}.

\begin{theorem}\label{main}
	Let $R$ be a finitely generated unital associative $k$-algebra and let $n\geq 3$. The Lie $k$-algebra $ \widehat{\slnr}$ is finitely presented if and only if the $k$-algebra $R$ being finitely presented.
\end{theorem}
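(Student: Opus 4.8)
The plan is to prove the two implications separately, and in both I would exploit that the construction $R \mapsto \widehat{\slnr}$ is a functor that commutes with filtered colimits. Since $R$ is finitely generated, write $R = F/I$ with $F = k\langle x_1,\dots,x_m\rangle$ the free associative $k$-algebra (set $x_0=1$) and $I$ a two-sided ideal. Exhaust $I$ by an increasing chain of finitely generated two-sided ideals $I_1\subseteq I_2 \subseteq\cdots$ with $\bigcup_N I_N = I$, and set $R_N = F/I_N$. Each $R_N$ is finitely presented, the natural maps $R_N \to R_{N+1}\to\cdots$ are surjective, and $R = \varinjlim_N R_N$. Because $\mathfrak{sl}_n(-)$ is compatible with direct limits and the universal central extension functor preserves both surjections and filtered colimits of perfect Lie algebras, this gives $\widehat{\slnr} = \varinjlim_N \widehat{\mathfrak{sl}_n(R_N)}$ with surjective structure maps. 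This colimit description is the backbone of both directions.

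For ``$R$ finitely presented $\Rightarrow \widehat{\slnr}$ finitely presented'' I would produce an explicit finite presentation. Using the generation-through-commutators discussion and Lemmas \ref{4pres} and \ref{3pres}, take as generators the finite set $\{X_{ij}(x_l) : 1\le i\ne j\le n,\ 0\le l\le m\}$; note that $k$-linearity is automatic for a $k$-Lie algebra and that for $n=3,4$ the central part $\mathcal W$ needs no extra generators, being spanned by brackets $[X_{ij}(x_l),X_{kl'}(x_{l''})]$. I would then impose (i) the finitely many commutator relations among these generators, (ii) for $n=3,4$ the finitely many $\mathcal W$-centrality relations among generators, and (iii) the finitely many relations $X_{ij}(f_s)=0$, one for each generator $f_s$ of $I$, where $X_{ij}(f_s)$ is read off through the standard commutator build-up of equation \ref{bettergen}. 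Call the resulting finitely presented Lie algebra $L$. There is an obvious surjection $L \twoheadrightarrow \widehat{\slnr}$, so the content is injectivity, equivalently that every defining relation of $\widehat{\slnr}$ already holds in $L$; by the centrally-closed argument of Lemmas \ref{4pres} and \ref{3pres} it suffices to exhibit $L$ as a central extension of $\widehat{\slnr}$.

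The crux here — which I expect to be the main obstacle and the content of the \emph{crucial lemma} — is that $s\mapsto X_{ij}(s)$ is well defined on all of $R$ in $L$ and that the full families of Steinberg and $\mathcal W$-relations follow from the finite set (i)--(iii). Finite presentability of $R$ enters precisely in propagating the relations $X_{ij}(f_s)=0$: using the adjoint action of the diagonal elements $T_{ik}(\alpha,1)$ and $T_{kj}(\beta,1)$ from equation \ref{bettergen}, which implement left and right multiplication by ring elements on the root space, one upgrades $X_{ij}(f_s)=0$ to $X_{ij}(u f_s v)=0$ for all words $u,v$, and hence to $X_{ij}(f)=0$ for every $f\in I$ by additivity. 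This is exactly what forces $s\mapsto X_{ij}(s)$ to descend from $F$ to $R=F/I$, and it is finite precisely because $I$ has finitely many generators $f_1,\dots,f_p$. The genuinely delicate part is carrying this out with ``few indices'' in the exceptional cases $n=3,4$, where the disjoint-commutativity relations and the control of the central contributions $\epsilon_{ijkl}$ must be re-derived by hand; this is presumably why the lemma is isolated and proved over three sections.

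For the converse, ``$\widehat{\slnr}$ finitely presented $\Rightarrow R$ finitely presented'', I would argue by descent through the colimit. Finite presentability of $\widehat{\slnr}$ means $\Hom(\widehat{\slnr},-)$ commutes with filtered colimits, so the identity factors as $\widehat{\slnr}\xrightarrow{s}\widehat{\mathfrak{sl}_n(R_N)}\to\widehat{\slnr}$ for some $N$; that is, $\widehat{\slnr}$ is a retract of the finitely presented Lie algebra $\widehat{\mathfrak{sl}_n(R_N)}$. To transfer this back to rings, I would build a functor $G$ recovering $R$ from $\widehat{\slnr}$ intrinsically: fix the constant subalgebra $\mathfrak{sl}_n(k)\subseteq\widehat{\slnr}$, decompose under its adjoint action, identify a root space with $R$ as a $k$-module, and read off the associative product from the bracket via $[X_{1j}(a),X_{j3}(b)]=X_{13}(ab)$. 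Since the structure maps of the system respect $\mathfrak{sl}_n(k)$, $G$ is functorial with $G(\widehat{\mathfrak{sl}_n(R_N)})=R_N$ and $G(\widehat{\slnr})=R$; applying $G$ to the split surjection yields a split surjection $R_N \twoheadrightarrow R$, exhibiting $R$ as a retract of the finitely presented algebra $R_N$, hence finitely presented. The point needing care in this direction is making $G$ genuinely canonical — recovering the root space and its multiplication without reference to a chosen presentation — and checking that the central part $\mathcal W$ for $n=3,4$ does not interfere with the reconstruction of the root spaces.
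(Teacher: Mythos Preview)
Your forward direction is essentially the paper's: both reduce to the finite presentability of $\widehat{\mathfrak{sl}_n(\mathfrak{R})}$ for $\mathfrak{R}=k\langle x_1,\dots,x_m\rangle$ free (your ``crucial lemma'' is exactly Lemma~\ref{mainlemma}), and then observe that imposing the finitely many relations $X_{12}(f_s)=0$ produces a central extension of $\widehat{\slnr}$, hence an isomorphism by central closedness. One point to sharpen: ``(i) the finitely many commutator relations among these generators'' is too loose. The paper's proof shows that Steinberg-type relations must be imposed up to a total degree $m$ that depends on $n$ and on $|\mathcal X|$ (e.g.\ $m\geq 4$ for $n\geq 5$, $m\geq 10$ for $n=4$, $m\geq\max\{|\mathcal X|+3,10\}$ for $n=3$); relations only among the degree-$\leq 1$ generators do not suffice.

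Your backward direction takes a genuinely different, categorical route. The paper's argument is short and elementary: it shows that $\ker\bigl(\widehat{\mathfrak{sl}_n(\mathfrak{R})}\to\widehat{\slnr}\bigr)$ is generated as a Lie ideal by $\{X_{12}(t_i)\}_{i\in S}$, and that if this kernel were finitely generated one could pass to a finite $S_0\subset S$; the relations $[T_{13}(a,b),X_{12}(c)]=X_{12}(abc)$ and $[T_{23}(a,b),X_{12}(c)]=-X_{12}(cab)$ then force the associative ideal $I$ to be generated by $\{t_i\}_{i\in S_0}$. No colimits, no reconstruction functor.

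Your retract argument has a genuine gap at the functor $G$, and you flag it but do not close it. The section $s:\widehat{\slnr}\to\widehat{\mathfrak{sl}_n(R_N)}$ arises from abstract compactness of finitely presented objects; nothing guarantees that $s$ carries the distinguished copy of $\mathfrak{sl}_n(k)$ to itself, so $s$ need not respect the root-space decomposition, and $G(s)$ is not visibly a well-defined algebra map $R\to R_N$. This is not a detail to be checked later---it is the whole content of the implication. The clean repair is to drop $G$: since the transition maps $\widehat{\mathfrak{sl}_n(R_N)}\to\widehat{\mathfrak{sl}_n(R_{N+1})}$ are surjective and each $\widehat{\mathfrak{sl}_n(R_N)}$ is generated by the common finite set $\{X_{ij}(x_l)\}$, finite presentability of the colimit forces the \emph{canonical} map $\widehat{\mathfrak{sl}_n(R_N)}\to\widehat{\slnr}$ to be an isomorphism for large $N$. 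That map sends $X_{12}(r)\mapsto X_{12}(\bar r)$, so it restricts to an isomorphism of root spaces; since $a\mapsto X_{12}(a)$ is injective (it stays injective after projecting to $\mathfrak{sl}_n$), one reads off $R_N\cong R$ directly, and $R$ is finitely presented.
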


Proof of the theorem relies on the following main lemma. We relegate its (lengthy) proof to the next section.
\begin{lemma}\label{mainlemma}

	Let $n\geq 3$ and let $\mathfrak{R}:=k \la \mathcal{X} \ra$ be the free $k$-algebra on a finite set $\mathcal{X}=\{x_1, \ldots ,x_q \}$. Then the Lie $k$-algebra $\widehat{\slnrbar}$ is finitely presented.

\end{lemma}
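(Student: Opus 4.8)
The plan is to write down an explicit finite presentation and then identify the Lie algebra it defines with $\widehat{\slnrbar}$, exactly as in Lemmas \ref{4pres} and \ref{3pres}. As generating set I take the finite family $X := \{X_{ij}(x_m) : 1 \le i \ne j \le n,\ 0 \le m \le q\}$, with $x_0 = 1$. Because $\mathfrak R = k\la \mathcal X\ra$ is free, its monomials form a $k$-basis, so $k$-linearity of $\alpha \mapsto X_{ij}(\alpha)$ costs no relations: it suffices to produce one element $X_{ij}(w) \in \mathfrak g$ for every monomial $w$ and extend linearly. I produce these by the ``fewer indices'' recursion \eqref{bettergen}: fixing for each pair $(i,j)$ an auxiliary index $k \notin \{i,j\}$ (available since $n \ge 3$), set $X_{ij}(x_m w) := [T_{ik}(x_m,1), X_{ij}(w)]$, where $T_{ik}(x_m,1) = [X_{ik}(x_m), X_{ki}(1)]$ is itself a bracket of generators. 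The finite relation set $\mathcal S$ then comprises the Steinberg and vanishing relations of Definition \ref{defst} — augmented, for $n = 3, 4$, by the $\mathcal W$-relations of Theorems \ref{sl4hat} and \ref{sl3hat} — restricted to the finitely many pairs of generators, together with the few relations needed to make the recursion above internally consistent.

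Writing $\mathfrak g$ for the algebra so presented, every relation of $\mathcal S$ holds in $\widehat{\slnrbar}$, so there is a surjection $\pi : \mathfrak g \twoheadrightarrow \widehat{\slnrbar}$ sending generators to generators. The substance of the proof is to upgrade the finitely many relations of $\mathcal S$ to the entire (infinite) defining family of $\widehat{\slnrbar}$, all derivations taking place inside $\mathfrak g$ by induction on total monomial degree. Concretely I would establish, for all monomials $\alpha, \beta$, the multiplicative law $[X_{ij}(\alpha), X_{jk}(\beta)] = X_{ik}(\alpha\beta)$ for distinct $i,j,k$, the relevant vanishing brackets, and the independence of $X_{ij}(w)$ from the choices (auxiliary index and parenthesization) hidden in the recursion. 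Each inductive step rewrites a degree-$d$ bracket as a bracket of strictly lower-degree elements via the Jacobi identity, whereupon the inductive hypothesis together with the base relations in $\mathcal S$ close the argument.

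This induction, rather than any single clever identity, is the main obstacle, and it is most delicate in the exceptional cases $n = 3, 4$. There the scarcity of indices is exactly what forces the $T_{ik}$-recursion in place of the standard generation $[X_{ij}(\alpha), X_{jk}(\beta)] = X_{ik}(\alpha\beta)$, and at the same time one must track the extra summands, valued in the six copies of $\mathfrak R/(2\mathfrak R + \mathfrak R[\mathfrak R,\mathfrak R])$ (resp. $\mathfrak R/(3\mathfrak R + \mathfrak R[\mathfrak R,\mathfrak R])$) making up $\mathcal W$, produced by the relations $[X_{ij}(a), X_{kl}(b)] = \epsilon_{ijkl}(\overline{ab})$ of Theorems \ref{sl4hat} and \ref{sl3hat}. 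Showing that these $\mathcal W$-contributions are reproduced consistently by finitely many seed relations — in particular that the passage to the quotients by $2\mathfrak R + \mathfrak R[\mathfrak R,\mathfrak R]$ and $3\mathfrak R + \mathfrak R[\mathfrak R,\mathfrak R]$ is respected in every degree — is the technical heart and, I expect, occupies the bulk of the remaining sections.

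Once all defining relations of $\widehat{\slnrbar}$ are verified to hold among the $X_{ij}(s) \in \mathfrak g$, the assignment $\widehat X_{ij}(s) \mapsto X_{ij}(s)$ is a well-defined homomorphism inverse to $\pi$ on generators; equivalently, $\pi$ is a central extension of the centrally closed algebra $\widehat{\slnrbar}$ and $\mathfrak g$ is perfect (each generator being a bracket), so $\ker \pi = 0$ by the argument of Lemmas \ref{4pres} and \ref{3pres}. Hence $\mathfrak g \cong \widehat{\slnrbar}$, exhibiting the finite presentation $\mathcal S$ and proving the lemma.
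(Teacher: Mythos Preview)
Your plan is essentially the paper's: a finite generating set $\{X_{ij}(x_m)\}$, the $T_{ik}$-recursion \eqref{bettergen} to manufacture higher-degree elements, and an induction on monomial degree to propagate all defining relations. Two packaging differences are worth noting. First, the paper organizes the induction through an approximating tower $L_m$ (Definition~\ref{lmdef}) of finitely presented algebras indexed by a degree bound, with $\varinjlim L_m \cong \widehat{\slnrbar}$; proving $\phi_{m,m+1}$ is an isomorphism for $m$ large (Proposition~\ref{claim2}) is exactly your inductive step, but the $L_m$ framework keeps the bookkeeping of ``which relations are already available at degree $m$'' explicit. Second, and more substantively, the paper does \emph{not} track the $\mathcal W$-summands and the quotient rings $\mathfrak R/(p\mathfrak R+\mathfrak R[\mathfrak R,\mathfrak R])$ as you propose. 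It first replaces the presentations of Theorems~\ref{sl4hat} and \ref{sl3hat} by the improved ones of Lemmas~\ref{4pres} and \ref{3pres}, in which the $\epsilon_{ijkl}$-relations are traded for simple centrality conditions $[\boldsymbol w,[X_{ij}(a),X_{kl}(b)]]=0$. This eliminates the need to verify the quotient structure degree by degree --- the point you flagged as the technical heart --- and reduces everything to Steinberg-type identities plus centrality.

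The genuine work, which you correctly identify but do not carry out, is Sections~\ref{sec:length-of-words-permutations-and-vanishing-of-commutators}--\ref{partb}: well-definedness of the recursion (independence of the auxiliary index and of left/right multiplication, Proposition~\ref{left=right ad} and Corollary~\ref{any  ad works}), and the degree-$(m{+}1)$ relations $(A^\bullet_{m+1})$, $(B^\bullet_{m+1})$, $(C^\bullet_{m+1})$. For $n=3$ the paper needs a nontrivial device beyond Jacobi manipulations --- the permutation trick (Lemma~\ref{main trick}, Claim~\ref{lastCclaim}) --- and a pigeonhole argument using $m\ge|\mathcal X|+3$ to force repeated letters; your sketch does not anticipate these, and without them the $n=3$ induction does not close.
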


\begin{remarkstar}
Recall that the presentation of $\widehat{\slnrbar}$ differs greatly between the cases $n\geq 5$, $n=4$ and $n=3$. This suggests that the proof of Lemma \ref{mainlemma} should be divided into three cases. Also, as $k\la \mathcal{X} \ra $ is  free as a $k$-module, we may freely use the presentation given in $\S$\ref{sectionpres}.
\end{remarkstar}

\begin{proof}[Proof of Theorem \ref{main}]\mbox{ }
	
\noindent	``$\Rightarrow$''
Assume that $R$ is obtained by imposing finitely many relations on  $\mathfrak{R}$ (namely  $R=\mathfrak{R}/I$, where $I=\mbox{Ideal}_{\mathfrak{R}} \la \{t_i | i \in S , S \mbox{ finite} \} \ra$). By functoriality of universal central extensions (see \cite[Section 1]{Neher}), one obtains the exact sequence \[ 0 \rightarrow \ker f \rightarrow \widehat{\slnrbar} \stackrel{f}{\rightarrow} \widehat{\slnr} \rightarrow 0, \] where $f$ is the standard projection map, lifting the standard projection map $\phi:\mathfrak{sl}_n(\mathfrak{R}) \rightarrow \mathfrak{sl}_n(R)$ on the $\mathfrak{sl}_n$ level.  We denote the universal extension maps by $\text{p}_\mathfrak{R}: \widehat{\mathfrak{sl}_n(\mathfrak{R})} \rightarrow \mathfrak{sl}_n(\mathfrak{R})$ and $\text{p}_{R}: \widehat{\mathfrak{sl}_n({R})} \rightarrow \mathfrak{sl}_n({R})$.

Consider the ideal $\mathfrak{i}$ of $\widehat{\slnrbar}$ generated by $\{X_{12}(t_i)\}_{i \in S}$. As $\mathfrak{i}$ is clearly sent to zero under $f$, the map $f$ induces  a surjective map $\bar{f}: \widehat{\slnrbar} / \mathfrak{i} \rightarrow \widehat{\slnr}$. 

\begin{claim}\label{claimone}
	The map $\overline{f}$ as above is a central extension of $\widehat{\slnr}$. 
\end{claim}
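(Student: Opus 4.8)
The plan is to prove the stronger containment $\ker f \subseteq \mathfrak{i} + Z(\widehat{\slnrbar})$, where $Z(\widehat{\slnrbar})$ denotes the center. This suffices: since $\mathfrak{i}\subseteq\ker f$ (as $f$ kills each $X_{12}(t_i)$), the kernel of $\bar f$ equals $\ker f/\mathfrak{i}$, and the displayed containment forces $\ker\bar f \subseteq (\mathfrak{i}+Z(\widehat{\slnrbar}))/\mathfrak{i}$, which is the image of the center and hence central in $\widehat{\slnrbar}/\mathfrak{i}$. Thus $\bar f$ is a central extension.

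First I would pin down $\mathfrak{i}$ by showing that $X_{ij}(s)\in\mathfrak{i}$ for every $s\in I$ and every $i\neq j$. Because $I=\mathrm{Ideal}_{\mathfrak R}\langle\{t_i\}\rangle$, every $s\in I$ is a $k$-linear combination of monomials $a\,t_m\,b$ with $a,b\in\mathfrak R$, so by $k$-linearity of $X_{ij}$ it is enough to place each $X_{ij}(a\,t_m\,b)$ in $\mathfrak{i}$. Starting from $X_{12}(t_m)\in\mathfrak{i}$, repeated use of the relation $[X_{ij}(\alpha),X_{jk}(\beta)]=X_{ik}(\alpha\beta)$ for distinct $i,j,k$ (valid in all the presentations of $\S$\ref{sectionpres}) lets me multiply $t_m$ on the right and on the left by arbitrary monomials and transport the resulting element to any prescribed pair of indices; here $n\geq 3$ is exactly what supplies a spare index to route through and makes the root positions connected under these moves. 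Since only this one relation is used, the argument is uniform over $n=3,4$ and $n\geq5$, the extra central terms $\epsilon_{ijkl}$ playing no role.

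Next I would run the diagram chase built on $p_R\circ f=\phi\circ p_{\mathfrak R}$. Given $z\in\ker f$, applying $p_R$ gives $\phi(p_{\mathfrak R}(z))=p_R(f(z))=0$, so $p_{\mathfrak R}(z)\in\ker\phi$. Now $\ker\phi$ is the ideal of $\mathfrak{sl}_n(\mathfrak R)$ of matrices with entries in $I$, and it is generated as an ideal by the off-diagonal matrix units $p_{\mathfrak R}(X_{ij}(s))$ with $i\neq j$, $s\in I$. By the previous step these lie in $p_{\mathfrak R}(\mathfrak{i})$, so $\ker\phi\subseteq p_{\mathfrak R}(\mathfrak{i})$ and there is $w\in\mathfrak{i}$ with $p_{\mathfrak R}(w)=p_{\mathfrak R}(z)$. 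Then $z-w\in\ker p_{\mathfrak R}$, which is central because $p_{\mathfrak R}$ is the universal central extension map. Hence $z=w+(z-w)\in\mathfrak{i}+Z(\widehat{\slnrbar})$, which completes the containment and therefore the claim.

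The two steps carrying genuine content are the index-routing of the first paragraph and the identification of $\ker\phi$ with the ideal generated by the off-diagonal matrix units over $I$. I expect the former to be the main obstacle, since it is where the hypothesis $n\geq 3$ is really used and where one must check that the construction reaches every root position carrying an arbitrary two-sided product $a\,t_m\,b$; once this is in hand, the centrality of $\ker p_{\mathfrak R}$ makes the remaining chase routine.
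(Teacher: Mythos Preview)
Your argument has a genuine gap at the assertion that $\ker\phi$ is generated, as a Lie ideal of $\mathfrak{sl}_n(\mathfrak R)$, by the off-diagonal elements $e_{ij}s$ with $s\in I$. Call that ideal $J$. Taking traces, every element of $J$ has trace in $[I,\mathfrak R]$: the generators have trace $0$, and for $A\in J$ (so all entries $a_{ij}\in I$) and any $B\in\mathfrak{gl}_n(\mathfrak R)$ one has $\mathrm{tr}([A,B])=\sum_{i,j}[a_{ij},b_{ji}]\in[I,\mathfrak R]$; inductively $\mathrm{tr}(J)\subseteq[I,\mathfrak R]$. On the other hand $\ker\phi$ contains every diagonal matrix with entries in $I$ and trace in $[\mathfrak R,\mathfrak R]$, so $\mathrm{tr}(\ker\phi)\supseteq I\cap[\mathfrak R,\mathfrak R]$, and this can be strictly larger than $[I,\mathfrak R]$. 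Concretely, take $\mathfrak R=k\langle x,y\rangle$ and $I$ the two-sided ideal generated by $[x,y]$: then $[x,y]\in I\cap[\mathfrak R,\mathfrak R]$ has degree $2$, while every nonzero element of $[I,\mathfrak R]$ has degree at least $3$, so $[x,y]\,e_{nn}\in\ker\phi\setminus J$. Hence $\ker\phi\not\subseteq p_{\mathfrak R}(\mathfrak i)$, and the deduction that $z-w\in\ker p_{\mathfrak R}$ collapses.

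The paper does not attempt your stronger inclusion $\ker f\subseteq\mathfrak i+Z(\widehat{\slnrbar})$. Instead it writes $x\in\ker f$ as $\tau+y+\sum_{i\neq j}X_{ij}(c_{ij})$ with $\tau$ central and $y$ a sum of diagonal-type elements $T_{ij}(a,b)$; it observes (as you do) that each $c_{ij}\in I$, so the last sum lies in $\mathfrak i$, and then shows $y$ is central \emph{modulo $\mathfrak i$} by commuting with generators: $[y,X_{lm}(r)]$ always lands in the single root space $X_{lm}(\mathfrak R)$, and being in $\ker f$ it must lie in $X_{lm}(I)\subseteq\mathfrak i$. This sidesteps the diagonal obstruction entirely. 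Your index-routing step establishing $X_{ij}(I)\subseteq\mathfrak i$ is correct and is precisely what the paper uses; the failure is confined to the diagram chase that follows.
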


\noindent\textit{Proof of Claim \ref{claimone}.} Let $X_{ij}(\mathfrak{r})$, where $\mathfrak{r}  \in  \mathfrak{R}$ (resp. $X_{ij}(r)$, where  ${r}  \in {R}$) be generators of $\widehat{\mathfrak{sl}_n(\mathfrak{R})}$  (resp. $\widehat{\mathfrak{sl}_n(R)}$), chosen such that 

\begin{itemize}
	\item They lift the corresponding elements in $\mathfrak{sl}_n(\mathfrak{R})$ (resp. $\mathfrak{sl}_n(R)$);
	\item $f(X_{ij}(\mathfrak{r}))=X_{ij}({r})$;
	\item $\{X_{ij}(\mathfrak{r})\}$ form a generating set of $\widehat{\mathfrak{sl}_n(\mathfrak{R})}$ satisfying the relations in $\S$\ref{sectionpres}.
\end{itemize}Such a choice is guaranteed by \cite[Section 1]{Neher}.

Now take any element $x\in \ker f$.  

The same relations from $\S\ref{sectionpres}$ allow us to  write $x=\mathcal{\tau}+\sum_{i,j,k}[X_{ij}(a_{ijk}),X_{ji}(b_{ijk})] + X_{ij}(c_{ij}) \in \widehat{\slnrbar}$, where $\tau$ lies in the center of $\widehat{\slnrbar}$, satisfying $\text{p}_\mathfrak{R}(\tau)=0$. As $\oplus_{i,j}[X_{ij}(R),X_{ji}(R)] \bigcap \oplus_{i,j} X_{ij}(R)=0$ in $\slnr$,  the equality $\phi \circ \text{p}_\mathfrak{R}(x)=0$ implies that all $c_{ij}\in I$. This yields a reduction to the case $x-\tau \in \sum_{i<j}[X_{ij} ,X_{ji} ]$. Finally, commuting $x-\tau$ with elements of form $X_{lm}(1)$ allows one to conclude that $\sum_{i,j,k}[X_{ij}(a_{ijk}),X_{ji}(b_{ijk})] $ is central in $\widehat{\slnrbar}/\mathfrak{i}$. This demonstrates centrality of $\ker \bar{f}$ and proves the claim. \qedhere

\vskip .1cm

As $\widehat{\slnr}$ is  centrally closed (see \cite{Neher}), Claim \ref{claimone} implies that $\ker f$, being equal to  $\mathfrak{i}$,  is indeed a finitely generated ideal of the Lie algebra $\widehat{\slnrbar}$. This fact along with Lemma \ref{mainlemma} then yields the forward implication instantly. 

\vskip .2cm

\noindent``$\Leftarrow$'': Recall  customary notations $R=\mathfrak{R}/I$, $\mathfrak{R}=k \la \mathcal{X} \ra$, $I=\mbox{Ideal}_{\mathfrak{R}} \la \{t_i | i \in S\} \ra$, $S$ is an  index set (not necessarily finite). Denote by $L\la \mathfrak{X} \ra $ the free Lie algebra on the alphabet $\mathfrak{X}$. For our purposes, we set $\mathfrak{X}:=\{\mathfrak{X}_{ij,x_k} \mid x_k \in \mathcal{X} \mbox{ pairwise distinct, } x_0=1  \}$ and write down a specific finite presentation of $\widehat{\slnr}
$ in the form of  $L\la \mathfrak{X} \ra /{\ker g}$, where $g$ is defined by  $g(\mathfrak{X}_{ij,x_k})=X_{ij}(x_k+I) \in \widehat{\slnr}$. The definition of $f$ allows us to define $h: L \la \mathfrak{X} \ra \rightarrow \widehat{\slnrbar}$ through $h(\mathfrak{X}_{ij,x_k})=X_{ij}(x_k) \in \widehat{\slnrbar}$, and a factorization\[ \begin{tikzcd}
&L \la \mathfrak{X} \ra  \arrow[r,	two heads, "h"] \ar[rr,out=30,in=-210,"g"]
& \widehat{\slnrbar}  \arrow[r, two heads, "f"] & \widehat{\slnr}\\
&\ker g \arrow[r, two heads]  \arrow[u,hook ]
& \ker f \arrow[u, hook]
\end{tikzcd}
\]where $f$ is the standard projection map from $\widehat{\slnrbar}$ to $\widehat{\slnr}$. This gives $\ker g= g^{-1}(0)=h^{-1} (\ker f)$. So if $\ker f$ is not finitely generated, neither can $\ker g$ be finitely generated as an ideal of $L\la \mathfrak{X} \ra $.

Now assume $I$ is not finitely generated. This implies that $\ker f$ is not finitely generated either: indeed, showing as in our proof of  the forward implication, the ideal  $\ker f$ is generated as an ideal by $\{X_{12}(t_i)\}_{i \in S}$. Recall also that $\ker f \cap X_{12}=X_{12}(I)$. Note that if $\ker f$ itself is finitely generated, we can restrict the subscripts $i$ to be chosen from $S_0$, a finite subset of $S$. It then follows from the relations $[T_{13}(a, b), X_{12}(c)]= X_{12}(abc), \ \ [T_{23}(a, b), X_{12}(c)]= -X_{12}(cab)$ that the finite set $\{t_i\}_{i\in S_0}$ generates $I$, giving a contradiction. 

This finishes our proof for the backward implication, and proves Theorem \ref{main}. \hfill $\qed$ 

\end{proof}

\section{The pivotal Lemma}\label{secpivlemma} We devote this section to the proof of Lemma \ref{mainlemma}, which consists of three parallel statements. Although  uniform treatments will be attempted whenever possible, their proofs will inevitably ramify at technical details.		Also, we adopt for the ``root subspaces'' the notation ${X}_{ij}$ instead of $\widehat{X}_{ij}$ for uniformity and simplicity.

\subsection{Initial analysis} Recall that $\widehat{\slnrbar}$ is generated by the symbols $X_{ij}(s)$, (where $s \in \mathfrak{R}$ and $1 \leq i \neq j \leq n$), and are subject to the relations given in  \S\ref{sectionpres}.

As a preliminary reduction, note that $\widehat{\slnrbar}$ is generated by $x_{ij}(s)$, where $s$ is a word (i.e.  monomial with coefficient 1) in $\mathfrak{R}$; and we can restrict the defining relations from \S\ref{sectionpres} to involve only monomials. Since setting $k_1 X_{ij}(s)+k_2X_{ij}(t)= X_{ij}(k_1s+k_2t)$ invites no ambiguity when $s,t$ are monomials, we may safely ignore relations stating $\alpha\mapsto X_{ij}(\alpha)$ being $k$-linear.

Then we first reduce the generating set to a finite subset. Candidates for the finite generating set are clear: just the elements $X_{ij}(1)$ and $X_{ij}(x_k)$ where  $1\leq k \leq l$. Note that with this set of generators, the elements $X_{ik}(u)$, where $\deg(u) \geq 2$, have to be \underline{defined} recursively through commutators $[[\ldots[X_{ii_1}(a_1),X_{i_1i_2}(a_2)] ,\ldots], X_{i_{m-2}i_{m-1}}(a_{m-1})], X_{i_{m-1}k}(a_{m})]$, where the degree of the $a_i's$ are smaller than $2$; or through recursive adjoint action of $T$'s on $X_{ik}(1)$, as was displayed in  (\ref{bettergen}). A most  crucial point is to show that all such expressions are equal.

\vskip .2cm

\subsection{Setup} \mbox{} The blueprint provided in the initial analysis shall be realized  by
\noindent ``approximating'' $\widehat{\slnrbar}$ through finitely presented Lie algebras $L_m$, each provided by appending relations of bounded degree. 

\begin{remarkstar}
	A similar strategy was used in \cite[\S 3.3]{RehmannSoule} and \cite[\S 3]{KrsticMcCool}, in the study of finite presentability for Steinberg groups.
\end{remarkstar}

We proceed as follows: 

\begin{definition}\label{lmdef}
	Let $m$ be a  positive integer, and $\boldsymbol{L_m}$ the Lie $(k-)$algebra generated by the symbols $X_{ij}(s)$,    \textbf{\textit{where $s$ is a monomial with coefficient 1}} in $\mathfrak{R}$ and $\deg(s) \leq m$, with defining relations being:

\begin{itemize}
	\item[]  When $n \geq 5$: 
\begin{align} 
&[X_{ij}(s_1), X_{jk}(s_2)] = X_{ik}(s_1s_2), \text{ for distinct } i, j, k, \text{where } \deg(s_1s_2) \leq m, \tag{$A^1_m$}\label{A^1m}\\
&[X_{ij}(s_1), X_{kl}(s_2)] = 0, \text{ for } j\neq k, i\neq l, \deg(s_1s_2)\leq m+1.
\tag{$A^2_m$}{}\label{A^2m}
\end{align}

\vskip.2cm
	\item[]When $n=4$: 
\begin{align} 
	&[X_{ij}(s_1), X_{jk}(s_2)] = X_{ik}(s_1s_2), \text{ for distinct } i, j, k, \text{where } \deg(s_1s_2) \leq m, \tag{$B^1_m$}\label{B^1m}\\
	&[X_{ij}(s_1), X_{kl}(s_2)] = 0, \text{ for } i=k \text{ or } j=l , \deg(s_1s_2)\leq m+1,
	\tag{$B^2_m$}\label{B^2m}\\
		&[\boldsymbol{w},[X_{ij}(s_1), X_{kl}(s_2)]] = 0, \text{ for } |\{i,j,k,l\}|=4, \deg(s_1s_2)\leq m+1, \boldsymbol{w}= X_{ij}(\gamma), 
	\tag{$B^3_m$}\label{B^3m}
	\end{align}{ for an arbitrary letter }$\gamma \in \mathcal{X} \cup \{1\}$.
	
	\vskip .2cm
	
	\vskip.2cm
		\item[]When $n=3$: 
\begin{align} 
	&[X_{ij}(s_1), X_{jk}(s_2)] = X_{ik}(s_1s_2), \text{ for distinct } i, j, k, \text{where } \deg(s_1s_2) \leq m, \tag{$C^1_m$}\label{C^1m}\\
	&[X_{ij}(s_1), X_{ij}(s_2)] = 0, \text{ for } i\neq j, \  \deg(s_1s_2)\leq m+1,
	\tag{$C^2_m$}\label{C^2m}\\
	&[\boldsymbol{w},[X_{ij}(s_1), X_{kl}(s_2)]] = 0, \text{ for } i=k \text{ or } j=l, \deg(s_1s_2)\leq m+1, \  \boldsymbol{w}= X_{ij}(\gamma), 
	\tag{$C^3_m$}\label{C^3m}
	\end{align}{ for an arbitrary letter }$\gamma \in \mathcal{X} \cup \{1\}$.
	
	\vskip .2cm
	 
\end{itemize}

\end{definition}
The following observation is clear, as we're working in $\widehat{\slnrbar}$, $n\geq 3$:

\begin{prop}\label{lmgen}
	The Lie algebra $L_m$ is generated by the finite set $\{X_{ij}(x_k),X_{ij}(1)\}_{1\leq i \neq j\leq n, 1\leq k \leq m}$ for $m \geq 2$.
\end{prop}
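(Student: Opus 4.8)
The plan is to prove the proposition by induction on the degree of the monomial $s$ carried by a defining generator $X_{ij}(s)$ of $L_m$. Write $H$ for the Lie subalgebra of $L_m$ generated by the finite set $\{X_{ij}(x_k),\,X_{ij}(1)\}$; since $L_m$ is by Definition \ref{lmdef} generated by all $X_{ij}(s)$ with $s$ a monomial of degree $\le m$, it suffices to show that each such generator lies in $H$. For the base case, if $\deg(s)\le 1$ then $s=1$ or $s=x_k$ for some letter $x_k\in\mathcal{X}$, so $X_{ij}(s)$ is one of the listed generators and lies in $H$ by construction.

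For the inductive step I would assume every $X_{ij}(u)$ with $\deg(u)<d$ lies in $H$ and treat a generator $X_{il}(s)$ with $\deg(s)=d$ and $2\le d\le m$. The idea is to peel off the last letter: factor $s=a\,y$ with $y\in\mathcal{X}$ the terminal letter and $a$ a monomial of degree $d-1$. Because $n\ge 3$, for the target row/column pair $i\neq l$ there exists an intermediate index $j\notin\{i,l\}$, and the multiplicative relation (\eqref{A^1m} when $n\ge5$, \eqref{B^1m} when $n=4$, \eqref{C^1m} when $n=3$, each of which reads $[X_{ij}(s_1),X_{jl}(s_2)]=X_{il}(s_1s_2)$ for distinct $i,j,l$ whenever $\deg(s_1s_2)\le m$) applies since $\deg(ay)=d\le m$. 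This yields
\[
X_{il}(s)=[X_{ij}(a),\,X_{jl}(y)].
\]
Here $X_{ij}(a)\in H$ by the induction hypothesis (as $\deg(a)=d-1<d$), while $X_{jl}(y)=X_{jl}(x_k)$ is a listed generator, so $X_{il}(s)\in H$ and the induction closes. The hypothesis $m\ge 2$ only serves to make the statement nonvacuous (for $m\le1$ every defining generator is already listed).

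The argument runs uniformly across the three regimes $n\ge5$, $n=4$, $n=3$, since the only relation invoked is the splitting/multiplicative one, whose form is identical in all three blocks of Definition \ref{lmdef}; the remaining relations ($A^2_m$, $B^{2,3}_m$, $C^{2,3}_m$) are irrelevant to generation. I do not expect a genuine obstacle: the single point requiring attention is the existence of the intermediate index $j$, which is exactly where $n\ge 3$ enters, and for $n=3$ the index $j$ is simply forced to be the unique third index — still perfectly adequate. In essence the proposition is the bookkeeping fact that any monomial of degree $\le m$ is built from letters by left-to-right multiplication, lifted to the Lie level by the multiplicative relation, which is why it is flagged as clear.
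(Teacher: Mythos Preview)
Your proof is correct and follows essentially the same approach as the paper: both arguments use the multiplicative relation $[X_{ij}(s_1),X_{jk}(s_2)]=X_{ik}(s_1s_2)$ (valid in all three regimes) to express each $X_{ij}(s)$ as a nested commutator in the degree-$\le 1$ generators. The paper merely sketches this via a single example, whereas you write out the induction on $\deg(s)$ explicitly; no substantive difference.
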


\begin{proof}  
	By definition, the Lie algebra $L_m$ is generated by elements of the form $X_{ij}(s)$, where $s$ is a monomial with coefficient 1 in $\mathfrak{R}$ and $\deg(s) \leq m$. Now if $s=t_1 \ldots t_m$ where $deg(t_i) \leq 1$, then $X_{ij}(s)$ can be expressed as an nested commutator. For example, $X_{12}(t_1t_2t_3)=\bigl[[X_{13}(t_1),X_{32}(t_2)] ,X_{23}(t_3)\bigr]$.
\end{proof}

We now describe the approximation process. Define homomophisms $\phi_{m,{m+1}}: L_m \rightarrow L_{m+1}$ by assigning $\phi_{m,m+1}(X_{ij}(s))=X_{ij}(s)$. As the defining relations in $L_m$ hold in $L_{m+1}$, we see that these homomorphisms are always well-defined. By composing such $\phi$, we obtain homomorphisms $\phi_{m,{m+j}}: L_m \rightarrow L_{m+j}$. By Proposition \ref{lmgen} above, we see that all $\phi_{m,{m+j}}$ are surjective. It follows that $\la L_i, \phi_{ij}\ra$ form a direct system over the index set $\dbZ$. The direct limit $\varinjlim L_i$ can be explicity described as the Lie algebra generated by the symbols $X_{ij}(s)$, where $s$ is a monomial with coefficient 1 in $\mathfrak{R}$; 
with defining relations being the union of the defining relations for $L_{m}$.
 
According to our initial analysis, this is exactly $\widehat{\slnrbar}$. Consequently general properties of direct limits imply that if one can show that the maps $\phi_{m,{m+1}}$ become isomorphisms for all $m$ greater or equal to some $T$, then $L_T \cong \widehat{\slnrbar}$.

\section{Unraveling Commutators: length of words and permutations}\label{sec:length-of-words-permutations-and-vanishing-of-commutators}

It is clear that Lemma \ref{mainlemma} will be proven once we show:
\begin{prop}\label{claim2}
	 The homomorphism $\phi_{m,{m+1}}$ is an isomorphism for large sufficiently $m$. More specifically:
	 
	 \begin{itemize}
	 	\item when
	 	 $n\geq 5$,  $m \geq 4$;  
	 	 \item  {when  $n=4$, $m\geq 10$};
	 	 	
	 	 	\item when   $n=3$, $m\geq  \max\{q+3,10\}$, where $q=|\mathcal{X}|$, the rank of $\mathfrak{R}$.
	 \end{itemize}
\end{prop}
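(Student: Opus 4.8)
The plan is to leverage the fact, already recorded in the setup, that every $\phi_{m,m+1}\colon L_m\to L_{m+1}$ is surjective (Proposition \ref{lmgen}). Hence it suffices to produce, for $m$ above the stated threshold, a Lie algebra homomorphism $\psi_m\colon L_{m+1}\to L_m$ with $\psi_m\circ\phi_{m,m+1}=\mathrm{id}_{L_m}$: this forces $\phi_{m,m+1}$ injective, and together with surjectivity gives the isomorphism, whence $L_m\cong\widehat{\slnrbar}$ by the direct-limit argument. To define $\psi_m$ I would send each generator $X_{ij}(s)$ of $L_{m+1}$ with $\deg(s)\le m$ to the same symbol in $L_m$, and each generator $X_{ij}(u)$ with $\deg(u)=m+1$ to a fixed \emph{standard} commutator $[X_{ik}(u_1),X_{kj}(u_2)]\in L_m$, obtained from one chosen factorization $u=u_1u_2$ with $\deg u_1,\deg u_2\le m$ and one chosen intermediate index $k\notin\{i,j\}$ (available since $n\ge 3$). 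All the content then lies in checking that this assignment respects the defining relations of $L_{m+1}$; since the relations of degree $\le m$ (resp. $\le m+1$) are literally relations of $L_m$, only the genuinely new ones survive, namely the merge relations \ref{A^1m}/\ref{B^1m}/\ref{C^1m} at $\deg(s_1s_2)=m+1$ and the vanishing (and, for $n=4,3$, triple-commutator) relations \ref{A^2m}, \ref{B^2m}, \ref{C^2m}, \ref{B^3m}, \ref{C^3m} at $\deg(s_1s_2)=m+2$.

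The first and principal step — the ``unraveling commutators'' of the section title — is a well-definedness statement internal to $L_m$: for $\deg(u)=m+1$ the element $[X_{ik}(u_1),X_{kj}(u_2)]$ is independent of the factorization $u=u_1u_2$ and of the intermediate index $k$. This is precisely what the merge relations at the top degree demand under $\psi_m$. I would prove it by a reordering argument on factorizations. Writing $u=a_1\cdots a_r$ as a word in the letters of $\mathcal{X}$, each standard nested-commutator realization of $X_{ij}(u)$ corresponds to an index-path $i=i_0,i_1,\dots,i_r=j$ together with a bracketing; two realizations differing by sliding a single split past one letter are shown equal by the Jacobi identity combined with one vanishing relation and one merge relation. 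The crucial bookkeeping is that every intermediate product appearing in such a move has degree $\le m+1$, so each relation invoked is genuinely present in $L_m$; chaining the adjacent moves (a bubble-sort through the admissible splits and index-paths) then yields full independence. For $n\ge 5$ one can always choose the path indices distinct from $i,j$ and from each other, so the far-commutativity relations apply on the nose and no degree inflation occurs; the smallest degree at which every reassociation pattern can be exhibited gives the threshold $m\ge 4$.

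The second step verifies the degree-$(m+2)$ relations. For far-commutativity one must show $[X_{ij}(a),X_{kl}(b)]=0$ in $L_m$ when $j\ne k$, $i\ne l$ and $\deg(ab)=m+2$, where one or both entries are themselves degree-$(m+1)$ elements realized through the standard commutator; here I would rewrite each such entry via its factorization, apply the Jacobi identity to push the outer bracket inward onto pieces of degree $\le m$, and collapse everything to vanishing relations already imposed in $L_m$. The triple-commutator relations \ref{B^3m}, \ref{C^3m} are handled analogously, using that the inner bracket can be reexpressed on degree-$\le m$ data and that centrality is already available in $L_m$ at one degree lower. For $n\ge 5$ the abundance of indices again makes all of this routine, completing that case.

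The main obstacle is the paucity of indices when $n=4$ and especially $n=3$: with only four or three rows the sliding moves cannot always route through a ``fresh'' index, so the far-commutativity relations do not apply directly. One is then forced to emulate a missing index by reexpressing elements through the diagonal lifts $T_{ij}(a,b)$ and the alternative generation recorded in \ref{bettergen}, feeding the product through several auxiliary brackets; the intermediate expressions this produces can reach degree about $m+2$, and every relation they invoke must still be available in $L_m$, which is what pushes the thresholds up to $m\ge 10$. For $n=3$ there is the further constraint $m\ge\max\{q+3,10\}$: emulating the two missing indices requires an auxiliary word long enough to pass through all $q$ generators of $\mathfrak{R}=k\la\mathcal{X}\ra$ simultaneously, so the degree budget must exceed $q$ by a fixed constant. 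I expect this $n=3$ bookkeeping — simultaneously controlling the degree budget, simulating both absent indices, and discharging the triple-commutator relations \ref{C^3m} — to be the longest and most delicate part, and the reason the proof of Lemma \ref{mainlemma} ramifies into three cases.
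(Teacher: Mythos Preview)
Your two-step plan --- define the degree-$(m{+}1)$ symbols inside $L_m$, then verify the new degree-$(m{+}1)$ merge relations and the degree-$(m{+}2)$ vanishing/centrality relations --- is exactly the scheme the paper follows, and your diagnosis that $n\ge5$ is routine while $n=4,3$ force one through the diagonal lifts $T_{ij}$ is correct. Two concrete points, however, separate your outline from a working proof.

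First, the paper does \emph{not} define $X_{ij}(u)$ for $\deg u=m{+}1$ via a chosen standard commutator; it sets $X_{ij}(u):=\ad(T_{ik}(1,x))X_{ij}(v)$ with $u=xv$, $\deg x=1$. Independence of $k$ is then a short direct check (Lemma~\ref{def+deg welldef}), and agreement with \emph{every} standard-commutator realization is proved separately (Proposition~\ref{left=right ad}, Corollary~\ref{any  ad works}) through a device you have not identified: the \emph{permutation trick} (Lemma~\ref{main trick}), which shows that $[[T_{ik}(1,\alpha),T_{kj}(1,\beta)],X_{ij}(\gamma)]$ is invariant under permuting the letters of $\gamma$, hence vanishes when $\gamma$ is a Lie-type commutator $[x,y]$. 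This trick is the engine behind both Step~(I) and the hardest case of Step~(II) for $n=3$; your ``bubble-sort through admissible splits'' cannot close without something of this kind, because with only three indices a single slide already lands you on a bracket that no relation of $L_m$ kills directly.

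Second, your explanation of the bound $m\ge q+3$ is not the actual mechanism. It has nothing to do with an auxiliary word ``passing through all $q$ generators''. In the verification of $(C^3_{m+1})$ for $\boldsymbol{w}\in X_{21}\cup X_{31}$, the computation reduces to two expressions $A,B$ that vanish provided two specific subwords can be arranged to share a terminal letter. The permutation trick (Claim~\ref{lastCclaim}(a)) lets you freely permute the letters of those subwords, and the pigeonhole principle guarantees a repeated letter once the relevant word has length exceeding $|\mathcal{X}|$; that is the source of $q+3$. So the constraint is a pigeonhole bound enabling a letter-matching step, not a coverage requirement.
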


\noindent\textit{Proof of Proposition \ref{claim2}.}  We prove this Claim according to the two-step scheme:\begin{itemize}
	\item[($\mathrm{I}$).]\label{I} Define in $L_m$ the elements $X_{ij}(u)$, where $u\in \mathfrak{R}$ is a monomial with coefficient 1 and $deg(u)=m+1$;
	
	\item[($\mathrm{I}\mathrm{I}$).]\label{II} Show that with such $X_{ij}(u)$ defined in $L_{m}$, all the defining relations in $L_{m+1}$ hold in $L_{m}$ as well.
\end{itemize}

\subsubsection*{\textbf{Step ($\mathrm{ I }$)}} Write $u=x v$, where $deg(v)=m$, and $x$ a element of the generating set of $\mathfrak{R}$, defined as in the statement of Lemma \ref{mainlemma}. Borrrowing notations from \S\ref{sectionpres}, we define in $L_m$
\begin{align}\label{def+deg}
 X_{ij}(u)=\ad(T_{ik}(1,x))X_{ij}(v), \ \  k\neq i, k\neq j.\tag{$*$}
\end{align}

This expression is independent of $k$, and  coincides with other reasonable definitions (using either nested commutators or $T$'s) of $X_ij(u)$. To see this, we require a slew of equalities, beginning from

\begin{lemma} (\cite[Lemme 1.13]{Kassellodaycentral}) \label{t eqn} Let $a,b,c$ be monomials in $\mathfrak{R}$, $deg(abc)\leq m$. Then the following equalities, as well as the versions obtained by permutation of indices $1,2, \ldots n$ , hold in $L_m$ for each $m\geq 2$:
	\begin{enumerate}
		\item $T_{12}(ab,c)=T_{13}(a,bc)+T_{32}(b,ca)$
		\item $ T_{12}(1,a)=-T_{21}(1,a)=T_{12}(a,1)$ 
		\item $T_{21}(a,b)-T_{21}(1,ba)=T_{31}(a,b)-T_{31}(1,ba)$
	\end{enumerate}
\end{lemma}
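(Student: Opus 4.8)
The plan is to prove all three identities by a single mechanism: use a composition relation — $(A^1_m)$ for $n\ge 5$, and its analogues $(B^1_m)$, $(C^1_m)$ for $n=4,3$ — to rewrite one ``root'' factor of a $T$-element as a bracket that splits off a third index, apply the Jacobi identity, and then collapse the resulting inner brackets back into $X$'s and $T$'s, again by composition relations and by the antisymmetry $T_{ij}(a,b)=-T_{ji}(b,a)$, which is immediate from antisymmetry of the Lie bracket. The key structural remark is that \emph{only} the nonvanishing composition relations $[X_{ij}(s),X_{jk}(t)]=X_{ik}(st)$ and Jacobi are ever used; at no point do I invoke the vanishing of a ``disjoint'' commutator. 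This is precisely what lets one computation serve all $n\ge 3$: for $n=3,4$, where the strong vanishing relation of the $n\ge 5$ case is unavailable and is replaced by the weaker $(B^2_m)$--$(B^3_m)$, $(C^2_m)$--$(C^3_m)$, the arguments below never call on it, and the three indices $1,2,3$ always suffice.

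For (1) I would write $X_{12}(ab)=[X_{13}(a),X_{32}(b)]$ and expand $T_{12}(ab,c)=[[X_{13}(a),X_{32}(b)],X_{21}(c)]$ by Jacobi into $[X_{13}(a),[X_{32}(b),X_{21}(c)]]+[[X_{13}(a),X_{21}(c)],X_{32}(b)]$. Here $[X_{32}(b),X_{21}(c)]=X_{31}(bc)$ and $[X_{13}(a),X_{21}(c)]=-X_{23}(ca)$, so the sum collapses to $[X_{13}(a),X_{31}(bc)]-[X_{23}(ca),X_{32}(b)]=T_{13}(a,bc)+T_{32}(b,ca)$, the last step using $-T_{23}(ca,b)=T_{32}(b,ca)$.

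For (2), the equality $-T_{21}(1,a)=T_{12}(a,1)$ is nothing but antisymmetry, so the content is $T_{12}(1,a)=T_{12}(a,1)$. I would evaluate both sides by splitting off index $3$ via the composition relation (expanding the $X_{21}$ factor of $T_{12}(1,a)$ and the $X_{12}$ factor of $T_{12}(a,1)$); Jacobi then reduces each to $T_{13}(a,1)-T_{23}(a,1)$, so the two sides agree. Part (3) is the assertion, promised in the Notation above, that $t(a,b)=T_{1j}(a,b)-T_{1j}(1,ba)$ is independent of $j$; I would prove it by the same split applied uniformly: expanding $X_{12}(a)=[X_{13}(a),X_{32}(1)]$ gives $T_{12}(a,b)=T_{13}(a,b)-T_{23}(ba,1)$, and the identical expansion of $X_{12}(1)$ gives $T_{12}(1,ba)=T_{13}(1,ba)-T_{23}(ba,1)$. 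The auxiliary term $T_{23}(ba,1)$ is common to both and cancels in the difference, leaving $T_{12}(a,b)-T_{12}(1,ba)=T_{13}(a,b)-T_{13}(1,ba)$, as required; all remaining index triples follow by relabeling $1,\dots,n$.

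The single delicate point — and the main obstacle — is degree bookkeeping. A composition relation is a defining relation of $L_m$ only when the product of its arguments has degree $\le m$, so every bracket I expand or collapse above must be certified to live in $L_m$ and not merely in $\widehat{\slnrbar}$. The hypothesis $\deg(abc)\le m$ is exactly what supplies this: each intermediate monomial produced — $ab$, $bc$, $ca$ and their submonomials — has degree at most $\deg(abc)\le m$. I would therefore annotate each manipulation with the inequality witnessing that the relation invoked is genuinely available in $L_m$, and verify that no step ever forms a product of degree exceeding $m$.
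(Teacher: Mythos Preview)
Your proposal is correct and follows essentially the same approach as the paper: for (1) you reproduce the paper's computation verbatim, and your explicit derivations of (2) and (3) are precisely what the paper means when it says ``(2) follows from (1), and (3) follows from (1) and (2)'' --- each of your expansions is a specialization of (1) (e.g.\ your identity $T_{12}(a,b)=T_{13}(a,b)-T_{23}(ba,1)$ is (1) with $b\mapsto 1$, $c\mapsto b$). Your emphasis that only the composition relations and Jacobi are invoked, together with the degree bookkeeping, is exactly the point the paper is making about these identities holding already in $L_m$.
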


\begin{proof}
	Note that these are standard equalities in $\widehat{\slnrbar}$,  whose proofs involve only the defining relations of $L_m$. We have \begin{align*}
	T_{12}(ab,c)=[X_{12}(ab),X_{21}(c)]&=[[X_{13}(a),X_{32}(b)],X_{21}(c)]\\ &=[[X_{13}(a),X_{21}(c)],X_{32}(b)]+[[X_{13}(a),[X_{32}(b),X_{21}(c)]]\\
	&=[-X_{23}(ca),X_{32}(b)]+[X_{13}(a),X_{31}(bc)]\\
	&=T_{13}(a,bc)+T_{32}(b,ca).
\end{align*} Similarly, (2) follows from (1), and (3) follows from (1) and (2).	
\end{proof}

\begin{remarkstar}
	The proof of Lemma \ref{t eqn}  is a demonstration of the general fact that relations of total degree $\leq m$ in $\widehat{\slnrbar}$ always hold in $L_m$. In fact, all equalities from the pool (\ref{t equations}) obey this rule. For example, if $i,j \geq 2$, then $t(a,b)$ commutes with all elements of the form $X_{ij}(s)$ ``up to degree $m-\deg(ab)$'' (in $L_m$).
\end{remarkstar}

\begin{lemma} The expression \label{def+deg welldef}
	(\ref{def+deg}) is well defined in $L_m(m \geq 3)$. Namely for $u=xv$, $deg(v)=m$, and $deg(x)=1$, the element $X_{ij}(u):=\ad(T_{ik}(1,x))X_{ij}(v), \  k\neq i, k\neq j$  does not depend on $k$.
\end{lemma}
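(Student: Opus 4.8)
The plan is to compare the two candidate definitions coming from indices $k$ and $k'$ (both distinct from $i,j$) and to show their difference vanishes in $L_m$. When $n=3$ the index $k$ is uniquely determined by $i,j$, so there is nothing to prove; we may therefore assume $n\geq 4$, in which case $k\neq k'$ forces $\{i,j,k,k'\}$ to consist of four distinct indices. By bilinearity of the bracket it suffices to establish
\[ \bigl[\,T_{ik}(1,x)-T_{ik'}(1,x),\, X_{ij}(v)\,\bigr]=0 \quad\text{in } L_m. \]

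First I would rewrite the difference of the two diagonal elements. Applying the permuted form of Lemma \ref{t eqn}(1) (the permutation sending $1,2,3\mapsto i,k,k'$) with $a=b=1$ and $c=x$ yields
\[ T_{ik}(1,x)=T_{ik'}(1,x)+T_{k'k}(1,x), \]
an identity valid in $L_m$ since its total degree is $\deg(x)=1\leq m$. Hence the problem reduces to proving $[T_{k'k}(1,x),X_{ij}(v)]=0$, i.e.\ that the ``diagonal'' element supported on $k,k'$ acts trivially on a root element whose indices are disjoint from $\{k,k'\}$.

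Next I would expand $T_{k'k}(1,x)=[X_{k'k}(1),X_{kk'}(x)]$ and apply the Jacobi identity:
\[ [T_{k'k}(1,x),X_{ij}(v)]=\bigl[[X_{k'k}(1),X_{ij}(v)],X_{kk'}(x)\bigr]+\bigl[X_{k'k}(1),[X_{kk'}(x),X_{ij}(v)]\bigr]. \]
For $n\geq 5$ both inner brackets are commutators of root elements with disjoint indices, so each vanishes directly by relation \eqref{A^2m} (the degrees $\deg(1\cdot v)=m$ and $\deg(x\cdot v)=m+1$ are both $\leq m+1$), and the whole expression dies. The delicate case is $n=4$: here $\{i,j,k,k'\}$ exhausts all four indices, so the single disjoint commutators $[X_{k'k}(1),X_{ij}(v)]$ and $[X_{kk'}(x),X_{ij}(v)]$ are the central $\epsilon$-type elements and need \emph{not} vanish. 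Instead I would invoke the double-commutator relation \eqref{B^3m}: the crucial point is that $x$ is a single letter, so that $X_{kk'}(x)$ and $X_{k'k}(1)$ are both admissible choices of $\boldsymbol{w}$; since the inner arguments satisfy $\deg(1\cdot v)=m$ and $\deg(x\cdot v)=m+1$, both bounded by $m+1$, each Jacobi term vanishes as a double commutator. This gives $[T_{k'k}(1,x),X_{ij}(v)]=0$ and completes the argument.

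I expect the $n=4$ step to be the main obstacle, precisely because the intermediate single commutators are nonzero central elements there, so one cannot cancel them termwise as in the $n\geq 5$ case; the argument genuinely relies on the restriction of \eqref{B^3m} to letter-valued $\boldsymbol{w}$ together with the fact that the prepended factor $x$ has degree one, which is exactly what keeps every relation of $L_{m+1}$ used above accessible inside $L_m$.
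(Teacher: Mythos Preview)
Your proof is correct, and it is somewhat more streamlined than the paper's. Both arguments begin by reducing to the identity $[T_{k'k}(1,x),X_{ij}(v)]=0$ with $i,j,k,k'$ pairwise distinct (the paper does this implicitly, you do it via Lemma~\ref{t eqn}(1)). From that point the paper decomposes $X_{ij}(v)$: for $n\geq 5$ it writes $X_{ij}(v)=[X_{il}(y),X_{lj}(z)]$ using a fifth index $l$ and checks that $T_{k'k}(1,x)$ commutes with each factor; for $n=4$ it writes $X_{ij}(v)=[T_{ik'}(1,y),X_{ij}(z)]$, applies Jacobi, kills one term via \eqref{B^3m} at lower degree, and then uses the $t$-equations to show that $[T_{k'k}(1,x),T_{ik'}(1,y)]$ annihilates a further decomposition of $X_{ij}(z)$. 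You instead decompose $T_{k'k}(1,x)=[X_{k'k}(1),X_{kk'}(x)]$ and let Jacobi produce two double commutators whose inner brackets have total degrees $m$ and $m+1$; these die at once by \eqref{A^2m} for $n\geq 5$ and by \eqref{B^3m} for $n=4$, the latter precisely because $1$ and $x$ are letters and hence the outer factors are admissible choices of $\boldsymbol{w}$. Your route avoids any auxiliary decomposition of $v$, treats the $n\geq 5$ and $n=4$ cases by one and the same Jacobi expansion, and never invokes the $t$-equations; the paper's approach, by contrast, illustrates the fifth-index and $t$-element techniques that reappear later in the argument.
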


\begin{proof}
	Without loss of generality, we may assume $i=1 , k=2, j=4$. So all there is to prove is $[T_{12}(1,x), x_{14}(v)]=[T_{13}(1,x), x_{14}(v)]$ (when $n=3$, there is nothing to prove). The proof is separated into two cases:
	\begin{enumerate}
		\item[(i)] When $ n\geq 5$: As $m\geq 4$, write $v=yz$, $deg(y)=1$. Now   $[T_{12}(1,x), X_{14}(v)]-[T_{13}(1,x), X_{14}(v)]=[T_{32}(1,x),X_{14}(v)]=[T_{32}(1,x),[X_{15}(y),X_{54}(z)]]$. By relation (\ref{A^2m}), the element $T_{32}(1,x)$ commutes with both entries of the inner bracket of the last term, so we have zero.
		\item[(ii)] When $n=4$: Adopt the  setup as in part (i), while expressing further $z=wv$, where $deg(w)=1$. Then we have  \begin{align*}
		[T_{32}(1,x),X_{14}(v)]&=[T_{32}(1,x),[T_{13}(1,y), X_{14}(z)]]\\&=[[T_{32}(1,x),T_{13}(1,y)], X_{14}(z)]-[T_{13}(1,y),[T_{32}(1,x),X_{14}(z)]]\\&\stackrel{(\mbox{\tiny\ref{B^3m}})}{=}[[T_{32}(1,x),T_{13}(1,y)], X_{14}(z)].
		\end{align*} Noting that $[T_{32}(1,x),T_{13}(1,y)]=[T_{32}(1,x),[X_{13}(1),X_{31}(y) ]]=-T_{13}(x,y)+T_{13}(1,xy)$ commutes with both entries of  $[X_{12}(w),X_{24}(v)]$ (By  (\ref{t equations})), we have zero again.
	\end{enumerate}And this intermediate lemma is proven.
\end{proof}

This defines the element $X_{ij}(u)$ $(deg(u)=m+1)$ in $L_m$. Eventually we have to show that this definition is compatible with all other reasonable ones, as can be seen in $\mathfrak{sl}_n(\mathfrak{R})$. We start from the following proposition, which studies what happens when we multiply on different sides.

\begin{prop}\label{left=right ad}
	  Let $n\geq3$. In $L_m$ with $m\geq 4$, the equality \[ad(T_{ik}(1,x))X_{ij}(zy)=ad(T_{kj}(1,y))X_{ij}(xz)\]holds. Here $u=xzy$ is a word of degree $m+1$ in $\mathfrak{R}$ with {{$ \deg(x)\deg(y) \geq 2$}}, $\deg(z)\geq 2$.
\end{prop}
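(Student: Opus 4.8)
The plan is to reduce the claimed identity, via the Jacobi identity, to the vanishing of a single ``diagonal'' commutator acting on $X_{ij}(z)$. First I would rewrite the left-hand side $\ad(T_{ik}(1,x))X_{ij}(zy)$ by replacing $X_{ij}(zy)$ with $[T_{kj}(1,y),X_{ij}(z)]$; this is legitimate in $L_m$ because $\deg(zy)=m+1-\deg(x)\le m$ (as $\deg(x)\ge 1$), so it is a relation of total degree $\le m$ and hence holds in $L_m$ by the principle recorded in the remark after Lemma \ref{t eqn}. Applying the Jacobi identity,
\[
[T_{ik}(1,x),[T_{kj}(1,y),X_{ij}(z)]]=[[T_{ik}(1,x),T_{kj}(1,y)],X_{ij}(z)]+[T_{kj}(1,y),[T_{ik}(1,x),X_{ij}(z)]].
\]
The last term equals $[T_{kj}(1,y),X_{ij}(xz)]=\ad(T_{kj}(1,y))X_{ij}(xz)$, the desired right-hand side, since $[T_{ik}(1,x),X_{ij}(z)]=X_{ij}(xz)$ is again a relation of degree $\deg(x)+\deg(z)=m+1-\deg(y)\le m$. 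Everything thus reduces to showing the first term vanishes.

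Next I would compute the inner commutator explicitly. Expanding $T_{kj}(1,y)=[X_{kj}(1),X_{jk}(y)]$ and using only the bracket formulas of the pool (\ref{t equations}) — all invoked here at total degree $\deg(x)+\deg(y)\le m$ — one obtains
\[
[T_{ik}(1,x),T_{kj}(1,y)]=-T_{kj}(x,y)+T_{kj}(1,yx)=-t_k(x,y),
\]
where $t_k(x,y)=T_{kj}(x,y)-T_{kj}(1,yx)$ is the ``diagonal'' element of Notation \ref{def of Tij} based at the index $k$. So the proposition becomes equivalent to the single relation $[t_k(x,y),X_{ij}(z)]=0$ in $L_m$.

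This relation has total degree $m+1$, so it is not inherited for free from $\widehat{\slnrbar}$, and proving it is the crux. Here I would use the hypothesis $\deg(z)\ge 2$ to split $z=z_1z_2$ into two nonempty words and expand $X_{ij}(z)=[X_{ip}(z_1),X_{pj}(z_2)]$; after one more application of Jacobi each resulting bracket pairs $t_k(x,y)$ with an $X$ of strictly smaller word-length, so all of them are relations of total degree $\le m$ and hold in $L_m$. When $n\ge 4$ one may take $p\notin\{i,j,k\}$, so that $t_k(x,y)$ commutes with both $X_{ip}(z_1)$ and $X_{pj}(z_2)$ outright (no index equals $k$), yielding $0$ at once. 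When $n=3$ the choice $p=k$ is forced, so instead I would invoke the multiplication action $[t_k(x,y),X_{ik}(z_1)]=-X_{ik}(z_1(xy-yx))$ and $[t_k(x,y),X_{kj}(z_2)]=X_{kj}((xy-yx)z_2)$ (again of degree $\le m$) and verify that the two contributions cancel.

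I expect the $n=3$ case to be the genuine obstacle: there the cancellation is not formal, because recombining $[X_{ik}(z_1(xy-yx)),X_{kj}(z_2)]$ and $[X_{ik}(z_1),X_{kj}((xy-yx)z_2)]$ into a common $X_{ij}$ of degree $m+1$ presupposes exactly the ``shift'' identity $[X_{ik}(ab),X_{kj}(c)]=[X_{ik}(a),X_{kj}(bc)]$ at the boundary degree that this section is trying to establish. I therefore anticipate that for $n=3$ this step must be routed through separately proved shift identities (the analogues of relation (3) in Lemma \ref{4pres} and relation (2) in Lemma \ref{3pres}), consistent with the larger threshold $m\ge\max\{q+3,10\}$ demanded there. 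Throughout, the role of the standing hypotheses $\deg(x)\deg(y)\ge 2$ and $\deg(z)\ge 2$ is precisely to keep every auxiliary relation at total degree $\le m$, so that the only genuinely degree-$(m+1)$ statement is the conclusion itself.
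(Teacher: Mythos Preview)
Your reduction via Jacobi to the vanishing of $[[T_{ik}(1,x),T_{kj}(1,y)],X_{ij}(z)]$ is exactly the paper's first step, and your treatment of $n\ge 4$ (expand $X_{ij}(z)$ through a fourth index $p\notin\{i,j,k\}$) matches the paper as well.

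The gap is in the $n=3$ case. You correctly diagnose that the naive cancellation would require the degree-$(m+1)$ shift identity $[X_{ik}(ab),X_{kj}(c)]=[X_{ik}(a),X_{kj}(bc)]$ and is therefore circular; but your proposed fix --- routing through separately proved shift identities and invoking the larger threshold $m\ge\max\{q+3,10\}$ --- is not what the paper does and is in fact misdirected: the proposition holds already for all $m\ge 4$ when $n=3$, and the larger threshold only enters later, in Section~\ref{partb}. The paper's idea is different. After writing $z=t_1t_2$ and expanding, one obtains
\[
[X_{12}(t_1[x,y]),X_{23}(t_2)]-[X_{12}(t_1),X_{23}([x,y]t_2)].
\]
Instead of trying to recombine these into a common $X_{13}$, the paper uses (\ref{C^3m}) to pass to $[X_{13}(t_1[x,y]),T_{32}(1,t_2)]-[T_{12}(1,t_1),X_{13}([x,y]t_2)]$ and then rewrites this, by one more Jacobi, as $-\bigl[[T_{12}(1,t_1),T_{23}(1,t_2)],X_{13}([x,y])\bigr]$. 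This is the crucial \emph{role swap}: the pair $(x,y)$ and the pair $(t_1,t_2)$ have exchanged places. Now Lemma~\ref{main trick} (the ``permutation trick'') applies, because $\deg(xy)\ge 3$ under the hypothesis $\deg(x)\deg(y)\ge 2$; that lemma says the expression is unchanged when the letters of $xy$ are permuted, so in particular swapping $x\leftrightarrow y$ leaves it fixed while sending $[x,y]\mapsto -[x,y]$, forcing the whole thing to vanish. You are missing both the role-swap manipulation and Lemma~\ref{main trick}, which together are the actual engine of the $n=3$ argument.
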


\begin{proof}\mbox{}A proof  for $n\geq 4$ is rather straightforward: \begin{align*}
	& \ [T_{ik}(1,x),X_{ij}(zy)]\stackrel{\mbox{\tiny(lower degree relations)}}{=} [T_{ik}(1,x),[T_{kj}(1,y),X_{ij}(z)]]\\ =& \  [[T_{ik}(1,x),T_{kj}(1,y)],X_{ij}(z)]+ [T_{kj}(1,y),X_{ij}(xz)].
	\end{align*} So all we have to show is that  $[[T_{ik}(1,x),T_{kj}(1,y)],X_{ij}(z)]=0$. As we work with $n\geq 4$, the last two lines  of the argument given in case (ii) of Lemma \ref{def+deg welldef} apply, with the slight modification that we write $X_{ij}(z)$ as a commutator of $X_{il}$ and $X_{lj}$, where $|\{i,j,k,l\}|=4$.

A proof for $n=3$ is now in order. As it involves only three indices, the proof is general enough to cover all  $n\geq 3$ cases. This promised proof depends on a trick worth singling out:

\subsection{\textbf{The permutation trick}}\label{permtrick}

\begin{lemma}\label{main trick}
	Let the setup   be as in Proposition \ref{left=right ad}, let $\alpha, \beta, \gamma$ be words in $\mathfrak{R}$,  $\deg(\gamma) \geq 3$, $\deg(\alpha\beta\gamma)=m+1$. Then \[[[T_{ik}(1,\alpha),T_{kj}(1,\beta)], X_{ij}(\gamma)]=[[T_{ik}(1,\alpha),T_{kj}(1,\beta)], X_{ij}(\gamma')], \] where $\gamma'$ is any word obtained from permuting the letters in  $\gamma$.
\end{lemma}

\begin{proof}[Proof of Lemma \ref{main trick}] \mbox{}
	
 Recall that $[T_{ij}(1,\alpha),T_{jk}(1,\beta)]=-T_{jk}(\alpha,\beta)+T_{jk}(1,\beta\alpha)$. 	Without loss of generality, we fix $i=1,j=3,k=2$.

 Express $ \gamma=
\gamma_1\gamma_2\gamma_3$, where 
$\deg(\gamma_1)\deg(\gamma_2)\deg(\gamma_3) \geq 1$. Then we have the expression \begin{align*}
 E:=&[[T_{12}(1,\alpha),T_{23}(1,\beta)], X_{13}(z)]\\
 =&[[T_{12}(1,\alpha),T_{23}(1,\beta)], [X_{12}(\gamma_1\gamma_2),X_{23}(\gamma_3)]]\\
 =&[X_{12}(\gamma_1\gamma_2(\alpha \beta-\beta\alpha)),X_{23}(\gamma_3)]+[X_{12}(\gamma_1\gamma_2),X_{23}((\beta\alpha-\alpha \beta)\gamma_3)].
\end{align*}

Recall the notation $t(a, b)=T_{1j}(a, b) - T_{1j}(1, ba)$. Do note:
\begin{itemize}
	\item[($i$)]We may permute $\gamma_1$ and $\gamma_2$ in $E$, since \begin{align*}
 & \quad [X_{12}((\gamma_1\gamma_2-\gamma_2\gamma_1)(\alpha \beta-\beta\alpha)),X_{23}(\gamma_3)]+[X_{12}(\gamma_1\gamma_2-\gamma_2\gamma_1),X_{23}((\beta\alpha-\alpha \beta)\gamma_3)]\\=& \quad [t(\gamma_1,\gamma_2),[X_{12}(\alpha \beta-\beta\alpha),X_{23}(\gamma_3)]]+[t(\gamma_1,\gamma_2),[X_{12}(1),X_{23}((\beta\alpha-\alpha \beta)\gamma_3)]]\\ \stackrel{\mbox{\tiny(\ref{C^1m})}}{=}& \quad 0;
\end{align*}

\item[($ii$)]  the computation in ($i$) may be applied (through symmetry)  to $E=$$\big[[T_{12}(1,\alpha),T_{23}(1,\beta)],$ $[X_{12}(\gamma_1),X_{23}(\gamma_2\gamma_3)]\big]$, allowing a swap of $\gamma_2$ and $\gamma_3$;

\item[($iii$)]   $S_n$ is generated by the elements $(12) $, $(123 \ldots ,n-1) $, $(n-1, n)$,  

\end{itemize}
\vskip.2cm 

It follows  that expressions $\bigl[ [T_{12}(1,\alpha),T_{23}(1,\beta)], X_{13}(z')\bigr]$ are all equal to $E$, where $z'$ is any monomial obtained by 
shuffling the factors of $z$. This proves Lemma \ref{main trick} 
\end{proof}

\noindent\textit{Proof of Proposition \ref{left=right ad}} \textit{continued:} Let $n=3$. Recall that we are trying to prove in $L_m$ (under restriction on degrees of $x,y,z$)  that the equality \[ad(T_{ik}(1,x))X_{ij}(zy)=ad(T_{kj}(1,y))X_{ij}(xz)\] holds.

Without loss of generality,
	   fix $i=1$, $j=3$, $k=2$.  As it is clear that\begin{align*}
	    [T_{23}(1,y),X_{13}(xz)]=[T_{23}(1,y), [T_{12}(1,x),X_{13}(z)]], &\mbox{ and }\\ [T_{12}(1,x),X_{13}(zy)]=[T_{12}(1,x),[T_{23}(1,y), X_{13}(z)]], &
	    \end{align*} our goal becomes proving $[[T_{12}(1,x),T_{23}(1,y)], X_{13}(z)]=0$.  
	    
	    Writing $z=t_1t_2$ (where $\deg(t_1)\deg(t_2) \geq 1$) again,  we have the equality   \begin{align}
	   &\bigl[[T_{12}(1,x),T_{23}(1,y)], X_{13}(t_1t_2)\bigr] \notag\\
	   =&\bigl[ [T_{12}(1,x),T_{23}(1,y)], [X_{12}(t_1),X_{23}(t_2)]\bigr] \tag{$\ast$}\\
	   =&[X_{12}(t_1[x,y]),X_{23}(t_2)]-[X_{12}(t_1),X_{23}([x,y]t_2)].\notag
	   \end{align} As the defining relations of $L_m$ imply the degree $m+1$ expressions $[X_{13}(t_1[x,y]),X_{23}(t_2)]$ and $[X_{12}(t_1),X_{13}([x,y]t_2)]$ being either zero or central , we may continue equality $(\ast)$ by \begin{align*}
	   &[X_{12}(t_1[x,y]),X_{23}(t_2)]-[X_{12}(t_1),X_{23}([x,y]t_2)]\\=&[X_{13}(t_1[x,y]),T_{32}(1,t_2)] -[T_{12}(1,t_1),X_{13}([x,y]t_2)]\\
	   =&\bigl[[T_{12}(1,t_1),X_{13}([x,y])],T_{32}(1,t_2)\bigr]-[T_{12}(1,t_1),X_{13}([x,y]t_2)]\\
	   =&\bigl[[T_{12}(1,t_1),T_{32}(1,t_2)], X_{13}([x,y])\bigr]\\
	   =&-\bigl[[T_{12}(1,t_1),T_{23}(1,t_2)], X_{13}([x,y])\bigr]. 
	   \end{align*} Since $\deg(xy)\geq  3$, applying Lemma \ref{main trick} gives $\bigl[[T_{12}(1,t_1),T_{23}(1,t_2)], X_{13}([x,y])\bigr]=0$, and concludes the proof of Proposition \ref{left=right ad}. \end{proof}	
   
The following corollary relaxes the condition $\deg(x)\deg(y)\geq2$ in Proposition \ref{left=right ad} .
   
\begin{corollary}\label{any  ad works}
	
		  Let $n=3$. In $L_m$ with $m\geq 4$, the equality \[ad(T_{ik}(1,x))X_{ij}(zy)=ad(T_{kj}(1,y))X_{ij}(xz)\]holds. Here $u=xzy$ is a word of degree $m+1$ in $\mathfrak{R}$ with {{$ \deg(x)\deg(y) \geq 1$}}, $\deg(z)\geq 2$.
\end{corollary}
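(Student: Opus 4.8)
The plan is to reduce to the single genuinely new case and then bootstrap Proposition~\ref{left=right ad} with the roles of its outer and middle factors interchanged. First I would observe that, for non-negative integer degrees, the hypothesis $\deg(x)\deg(y)\geq 1$ forces both $\deg(x),\deg(y)\geq 1$, and that the subcase $\deg(x)\deg(y)\geq 2$ is precisely Proposition~\ref{left=right ad}. Thus the only new content is $\deg(x)=\deg(y)=1$, i.e.\ $x$ and $y$ are single letters; in this case $\deg(z)=m-1\geq 3$ since $m\geq 4$. Fixing $i=1,j=3,k=2$ as before, I would note (exactly as in the proof of Proposition~\ref{left=right ad}, using $X_{13}(xz)=\ad(T_{12}(1,x))X_{13}(z)$ and $X_{13}(zy)=\ad(T_{23}(1,y))X_{13}(z)$, both valid in $L_m$ since the products have degree $m$) that the claimed identity is equivalent to $[[T_{12}(1,x),T_{23}(1,y)],X_{13}(z)]=0$.

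Next I would repeat the reduction computation from the proof of Proposition~\ref{left=right ad}: writing $z=t_1t_2$ and using only the relations of degree $\leq m+1$ together with $[T_{12}(1,x),T_{23}(1,y)]=-T_{23}(x,y)+T_{23}(1,yx)$, one obtains
\[ [[T_{12}(1,x),T_{23}(1,y)],X_{13}(z)]=-[[T_{12}(1,t_1),T_{23}(1,t_2)],X_{13}([x,y])]. \]
Crucially, this computation never invokes the hypothesis $\deg(x)\deg(y)\geq 2$; that hypothesis was used in Proposition~\ref{left=right ad} only at the final step, to make $\deg([x,y])\geq 3$ and apply Lemma~\ref{main trick}. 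Here $\deg([x,y])=2$, so Lemma~\ref{main trick} is unavailable --- but now the large degree has migrated into $z=t_1t_2$.

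The key move I would then make is to exploit this freedom: choose the factorization with $t_1$ the first letter of $z$ (so $\deg(t_1)=1$) and $t_2$ the remaining word (so $\deg(t_2)=m-2\geq 2$), whence $\deg(t_1)\deg(t_2)=m-2\geq 2$. The middle factor $[x,y]=xy-yx$ is a difference of words of degree exactly $2\geq 2$, and the total degree is $1+2+(m-2)=m+1$. Therefore Proposition~\ref{left=right ad} applies with outer pair $(t_1,t_2)$ and middle word of degree $2$ --- applied separately to the monomials $xy$ and $yx$ by linearity --- and yields $[[T_{12}(1,t_1),T_{23}(1,t_2)],X_{13}(xy)]=0=[[T_{12}(1,t_1),T_{23}(1,t_2)],X_{13}(yx)]$. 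Hence the right-hand side above vanishes and the corollary follows.

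I expect the main obstacle to be conceptual rather than computational: recognizing that Proposition~\ref{left=right ad} can be re-applied with its outer and middle factors swapped. The very degree deficiency that blocks Lemma~\ref{main trick} in the $(1,1)$ case ($\deg(xy)=2<3$) is exactly what is cured by letting $[x,y]$ play the role of the degree-$\geq 2$ middle factor, while the now-unconstrained large word $z$ supplies an outer pair meeting $\deg(t_1)\deg(t_2)\geq 2$. The only care needed is bookkeeping of degrees, to ensure that every intermediate element --- in particular the products of degree $m$ and $m+1$ --- genuinely lives in $L_m$.
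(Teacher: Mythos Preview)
Your proof is correct but takes a genuinely different route from the paper's. The paper simply iterates Proposition~\ref{left=right ad} as a black box: writing $z=z_1z'z_2$ with $\deg(z_i)=1$, it chains the equalities for the three splittings $x\mid z_1z'\mid z_2y$, then $xz_1\mid z'\mid z_2y$, then $xz_1\mid z'z_2\mid y$, walking the outer factors through $z$. You instead reopen the proof of Proposition~\ref{left=right ad} and exploit its intermediate ``swap'' identity
\[
[[T_{12}(1,x),T_{23}(1,y)],X_{13}(t_1t_2)]=-[[T_{12}(1,t_1),T_{23}(1,t_2)],X_{13}([x,y])],
\]
which indeed nowhere uses $\deg(x)\deg(y)\geq 2$, to trade the short outer pair $(x,y)$ for a pair $(t_1,t_2)$ satisfying $\deg(t_1)\deg(t_2)\geq 2$; a single application of Proposition~\ref{left=right ad} then finishes. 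The paper's argument is shorter and modular; yours makes the symmetry between the outer pair and the factorization of the middle word explicit, and has the incidental virtue that it goes through cleanly at $m=4$, whereas the paper's middle step requires $\deg(z')\geq 2$, i.e.\ $m\geq 5$ in the only new case $\deg(x)=\deg(y)=1$.
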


\begin{proof}
Write $z=z_1z'z_2$, where $deg(z_i)=1$. Then this corollary is a consequence of the observation $x(z_1z'z_2y)=(xz_1z')(z_2y)=(xz_1)(z'z_2y)=(xz_1z'z_2)y$ and repeated applications of Proposition \ref{left=right ad}.
\end{proof}

\begin{remark}
	From the proof of Corollary \ref{any  ad works}, we may extract the stronger equality \newline $ad(T_{ik}(1,x))X_{ij}(zy)=$$ad(T_{kj}(1,z_2y))X_{ij}(xz_1z')=$$ad(T_{ik}(1,xz_1))X_{ij}(z'z_2y)=$$ad(T_{kj}(1,y))X_{ij}(xz)$ in $L_m$. 
\end{remark}

   This concludes our proof of step ($\mathrm{I}$), and we are now left with step $(\mathrm{I}\mathrm{I})$, the last and most involved part of the proof. 
   
\begin{remark}\label{pfmoral}
	The rule of thumb regarding Proposition \ref{claim2} is that the proof gets harder when $n$ gets smaller: this applies to both the previous, and the upcoming part of the proof.  When $n \leq 4$, we'll have to impose lower bounds on the degree of commutators to make the arguments work. We also remark that analysis of the relation between commutators like $[X, X_{kl}(b)]$ and $[X, X_{kl}(b')]$ turns out to be most important, where $b'$ is obtained through permuting the factors of $b$. This is exactly where the permutation trick, discussed in this section (\ref{permtrick}), becomes useful again.
\end{remark}

\section{Completing the proof}\label{partb}

We recall what has been done so far: by Section \ref{sectionpres}, we are equipped with presentations of $\widehat{\mathfrak{sl}_n(\mathfrak{R})}$, divided into three cases $n\geq 5$, $n=4$ and $n=3$; in Section \ref{secfinpres}, we reduced the finite presentability problem of general $\widehat{\mathfrak{sl}_n(R)}$ to that of $\widehat{\mathfrak{sl}_n(\mathfrak{R})}$;  in Section \ref{secpivlemma}, we adopted the strategy of approximating $\widehat{\mathfrak{sl}_n(\mathfrak{R})}$ by finitely presented Lie algebras $L_m$, where $\varinjlim L_m \cong \widehat{\mathfrak{sl}_n(\mathfrak{R})}$ and reduced the problem to showing $L_m$ stabilizes for large enough $m$ (i.e. Proposition \ref{claim2}); in Section \ref{sec:length-of-words-permutations-and-vanishing-of-commutators} we showed how to define the degree $m+1$ elements in $L_m$.

In this final section, we proceed to finish the proof of Proposition \ref{claim2}.

\subsubsection*{\textbf{Step ($\mathrm{I}\mathrm{I}$) of Proposition \ref{claim2}}}

Recall the relations (\ref{A^1m})  through (\ref{C^3m}) introduced in Definition \ref{lmdef}. We specifically want to show that (depending on the structure of $\widehat{\slnrbar}$)  once the degree $m+1$ elements are defined (in a specific approximating Lie algebra $L_{\bullet}$), then the collection ($A_{m+1}$) (or its $B$,$C$ counterpart, respectively) follows from the group of relations of smaller degree, namely ($A_m$) (or its $B$,$C$ counterpart, respectively). In this way one establishs $L_m \cong L_{m+1}$ (through the standard identification of degree zero and one generators); it is clear the same argument derives that all $L_i$, $i\geq m$ are isomorphic. 

Let us prove these relations one by one, for cases $n\geq 5$, $n=4$ and $n=3$, respectively. Do note that we only have to establish the relations in ($A_{m+1}$) (or its $B$,$C$ counterpart, respectively) that involve elements of degree $m+1$, or when the degrees of elements on the bracket side of the relation sum to $m+2$.  

\noindent\textit{\textbf{Convention} }We agree that, unless otherwise stated, when referring to elements of $\mathfrak{R}$,  a single letter (with or without subscripts) always represent a word. 

\begin{lemma}\label{relm+1}
	The following relations hold in $L_m$.
	
	\begin{itemize}
		\item[]  When $n \geq 5$: 
		\begin{align} 
		&[X_{ij}(s_1), X_{jk}(s_2)] = X_{ik}(s_1s_2), \text{ for distinct } i, j, k, \text{where } \deg(s_1s_2) \leq m+1, \tag{$A^1_{m+1}$}\label{A^1_{m+1}}\\
		&[X_{ij}(s_1), X_{kl}(s_2)] = 0, \text{ for } j\neq k, i\neq l, \deg(s_1s_2)\leq m+2.
		\tag{$A^2_{m+1}$}{}\label{A^2_{m+1}}
		\end{align}

		\vskip.2cm
		\item[]When $n=4$, $m\geq 10$: 
		\begin{align} 
		&[X_{ij}(s_1), X_{jk}(s_2)] = X_{ik}(s_1s_2), \text{ for distinct } i, j, k, \text{where } \deg(s_1s_2) \leq m+1, \tag{$B^1_{m+1}$}\label{B^1_{m+1}}\\
		&[X_{ij}(s_1), X_{kl}(s_2)] = 0, \text{ for } i=k \text{ or } j=l , \deg(s_1s_2)\leq m+2,
		\tag{$B^2_{m+1}$}\label{B^2_{m+1}}\\
		&[\boldsymbol{w},[X_{ij}(s_1), X_{kl}(s_2)]] = 0, \text{ for } |\{i,j,k,l\}|=4, \deg(s_1s_2)\leq m+2, \boldsymbol{w}= X_{ij}(\gamma),
		\tag{$B^3_{m+1}$}\label{B^3_{m+1}}
		\end{align} { for an arbitrary letter }$\gamma \in \mathcal{X} \cup \{1\}$.
		
		\vskip.2cm
		\item[]When $n=3$, $m\geq  \max\{|\mathcal{X}|+3, 10\}$ : 
		\begin{align} 
		&[X_{ij}(s_1), X_{jk}(s_2)] = X_{ik}(s_1s_2), \text{ for distinct } i, j, k, \text{where } \deg(s_1s_2) \leq m+1, \tag{$C^1_{m+1}$}\label{C^1_{m+1}}\\
		&[X_{ij}(s_1), X_{ij}(s_2)] = 0, \text{ for } i\neq j, \  \deg(s_1s_2)\leq m+2,
		\tag{$C^2_{m+1}$}\label{C^2_{m+1}}\\
		&[\boldsymbol{w},[X_{ij}(s_1), X_{kl}(s_2)]] = 0, \text{ for } i=k \text{ or } j=l, \deg(s_1s_2)\leq m+2, \  \boldsymbol{w}= X_{ij}(\gamma), \tag{$C^3_{m+1}$}\label{C^3_{m+1}}
		\end{align}{ for an arbitrary letter }$\gamma \in \mathcal{X} \cup \{1\}$.
		
	\end{itemize}
\end{lemma}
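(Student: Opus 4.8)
The plan is to treat the three regimes $n\ge 5$, $n=4$, $n=3$ separately, and in each to show that every relation listed for $L_{m+1}$ in which a genuinely new (degree $m+1$) element occurs is already a consequence of the relations of $L_m$, the definition (\ref{def+deg}) of the degree-$(m+1)$ elements, and the $T$-formulas collected in (\ref{t equations}). The uniform mechanism is: peel a single letter $x$ off one monomial argument, rewrite $X_{ab}(xw)=\ad(T_{ac}(1,x))X_{ab}(w)$, apply the Jacobi identity, and then invoke the lower-degree relations together with Lemma \ref{def+deg welldef} (independence of the auxiliary index). I would establish the relations in a fixed order: first the multiplicative relations (\ref{A^1_{m+1}}), (\ref{B^1_{m+1}}), (\ref{C^1_{m+1}}), and only afterwards the vanishing and centrality relations, since the latter reuse the former to refactor monomials.

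For the multiplicative relations, to prove $[X_{ij}(s_1),X_{jk}(s_2)]=X_{ik}(s_1s_2)$ at total degree $m+1$ I would write $s_1=xs_1'$, expand $X_{ij}(s_1)=[T_{il}(1,x),X_{ij}(s_1')]$, and apply Jacobi. When $n\ge 4$ a spare index $l\notin\{i,j,k\}$ exists, the cross term $[T_{il}(1,x),X_{jk}(s_2)]$ vanishes by the distinct-index relation, and the remaining term is $[T_{il}(1,x),X_{ik}(s_1's_2)]=X_{ik}(s_1s_2)$ by (\ref{A^1m})/(\ref{B^1m}) and the definition. When $n=3$ no such $l$ exists and the cross term survives; here I would instead read the relation off the consistency identities already proved, namely Proposition \ref{left=right ad} and Corollary \ref{any ad works}, which say precisely that the two ways of attaching a letter agree. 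In all cases Lemma \ref{def+deg welldef} guarantees the output is the canonical element $X_{ik}(s_1s_2)$.

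The vanishing relation is immediate for $n\ge 5$: given $[X_{ij}(s_1),X_{kl}(s_2)]=0$ with $j\ne k$, $i\ne l$, I would split a factor of degree $\ge 2$ and insert a fresh index $p\notin\{i,j,k,l\}$ (available since $n\ge 5$) to write one argument as $[X_{\cdot p},X_{p\cdot}]$; Jacobi then decomposes the bracket into two lower-degree brackets whose index patterns still satisfy the distinct-index hypothesis, so (\ref{A^2m}) finishes both the degree-$(m+1)$ and degree-$(m+2)$ cases at once.

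The genuine obstacle is the low-rank regime $n=4,3$, where no fresh index is available and the ``top'' commutators are central (the $\mathcal W$-summand) rather than zero. For (\ref{B^2_{m+1}})/(\ref{C^2_{m+1}}) and (\ref{B^3_{m+1}})/(\ref{C^3_{m+1}}) I would split using the unique remaining index; Jacobi then produces inner brackets that are either lower-degree same-row/column brackets (zero by (\ref{B^2m})/(\ref{C^2m})) or disjoint/central brackets. Because $L_m$ is generated by single-letter generators (Proposition \ref{lmgen}) and (\ref{B^3m})/(\ref{C^3m}) says the central brackets commute with all of them, such central inner brackets commute with everything, so the cross terms drop out---provided each split keeps the inner bracket within the degree range of (\ref{B^3m})/(\ref{C^3m}). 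The delicate part is reconciling the \emph{symmetric} identities satisfied by the central $\mathcal W$-elements: I must reorder the letters of the long monomial arguments into a convenient shape, and this is exactly the permutation trick of Lemma \ref{main trick}, which is legitimate because reshuffling adjacent factors alters a commutator only by central terms that the surrounding bracket annihilates. This is also where the degree hypotheses enter. I expect $m\ge 10$ to be what lets each argument be cut into pieces of degree $\ge 3$ so that Lemma \ref{main trick} applies, and for $n=3$ the sharper bound $m\ge q+3$ presumably arises from a pigeonhole step: a monomial of degree exceeding $q=\abs{\mathcal X}$ must repeat a letter, and the permutation trick can bring the two equal letters adjacent so that a same-root relation collapses the bracket to zero. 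I expect the $n=3$ centrality relation (\ref{C^3_{m+1}}) to be the single most demanding case, and would organize its proof so that (\ref{C^1m}) is used to refactor monomials freely, (\ref{C^2m}) supplies vanishing of same-root brackets, and (\ref{C^3m}) plus the permutation trick supply the centrality needed to discard every cross term generated by Jacobi.
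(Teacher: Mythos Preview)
Your high-level plan is correct and matches the paper: peel a letter, rewrite via $\ad(T)$, apply Jacobi, and use the lower-degree relations together with the permutation trick. For $n\ge 5$ this works exactly as you describe, and $(B^1_{m+1})$, $(B^2_{m+1})$, $(C^2_{m+1})$ go the same way. The gaps are all in the low-rank centrality relations, where you underestimate what is actually required.

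First, for $(C^1_{m+1})$ your appeal to Proposition~\ref{left=right ad} and Corollary~\ref{any  ad works} is only half the story. Those results compare two $\ad(T)$-expressions; you still need to identify $[X_{ij}(s_1),X_{jk}(s_2)]$ with such an expression, and that identification (via $(C^3_m)$) only works directly when the ``middle'' word has degree $\ge 2$. The single-letter shift $[X_{13}(a),X_{32}(bc)]=[X_{13}(ab),X_{32}(c)]$ with $\deg(b)=1$ is not covered by Corollary~\ref{any  ad works} and requires a separate trick: the paper rewrites the obstruction $T_{23}(1,bc)-T_{23}(b,c)$ using the $T$-identities of Lemma~\ref{t eqn} to transport it to a $T_{31}$-difference, which then reduces to the already-settled $\deg(b)\ge 2$ case. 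The boundary cases $\deg(s_2)=0$ also need their own computation. You should not expect $(C^1_{m+1})$ to fall out of consistency alone.

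Second, and more seriously, for $(B^3_{m+1})$ and $(C^3_{m+1})$ the proof is organised not by ``splitting via the remaining index'' but by a case analysis on which root space the test element $\boldsymbol{w}$ lies in. Some positions of $\boldsymbol{w}$ are immediate from Jacobi and the degree-$m$ relations; others (e.g.\ $\boldsymbol{w}\in X_{23}$ for $n=4$, and $\boldsymbol{w}\in X_{21}$ for $n=3$) require a multi-step expansion in which one repeatedly trades brackets of $X$'s for actions of $T$'s. In the hardest $n=3$ case the expression decomposes as a sum of two terms $A+B$, each of which must be shown to vanish separately, and this is where the pigeonhole is actually used---but not in the way you guess. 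The repeated letter is not brought adjacent to invoke $[X_{ij},X_{ij}]=0$; rather, one uses Claim~\ref{lastCclaim} to rearrange so that two specific subwords ($a_2$ and $b_1$) have the \emph{same terminal letter}. This makes the two summands of an inner ring commutator $[b_2b_1,a_2]$ end identically, so that the permutation invariance of Claim~\ref{lastCclaim}(a) forces the two resulting brackets to coincide and cancel. After that cancellation, $A$ and $B$ each reduce to an expression of the shape $[[T,T'],X_{13}([p,q])]$, which vanishes by the Lemma~\ref{main trick} argument. Your sketch would not produce this structure, and without it the $n=3$ case does not close.
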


\subsection*{\textit{\textbf{Proof of Lemma \ref{relm+1}}},   $\boldsymbol{ n\geq 5}$:} The two types of relations we need to show come from $\mathfrak{st}_n$. We prove them one by one, while assuming (\ref{A^1m}) and (\ref{A^2m}), and having degree $m+1$ elements  defined by (\ref{def+deg}) (see Step ($\mathrm{I}$) of Proposition \ref{claim2}).
	
	\vskip .3cm
	\begin{itemize}
		\item[$\bullet$$(A^1_{m+1})$:]
		\[[X_{ij}(s), X_{jk}(t)] = X_{ik}(st), \text{ for distinct } i, j, k, \text{where } \deg(st)  \leq m+1.\]
		
		\noindent\textit{Indeed,} without loss of generality, assume $\deg(s) \geq \deg(t)$. Corollary \ref{any  ad works} allows for considering only the case $s=xy$, where $deg(x)=1$. Take $l$ different from $i,j,k$. Then 
		\begin{align*}&[X_{ij}(xy), X_{jk}(t)]\\=&[[T_{il}(1,x),X_{ij}(y)],X_{jk}(t)]\\\stackrel{\mbox{\tiny(\ref{A^2m})}}{=}& 0 + [T_{il}(1,x),[X_{ij}(y),X_{jk}(t)]]\\ \stackrel{\mbox{\tiny(\ref{A^1m})}}{=}& \ad (T_{il}(1,x)) X_{ik}(yt)=X_{ik}(st).
		\end{align*}This proves $(A^1_{m+1})$.
		
		\vskip.2cm
		\item[$\bullet$($A^2_{m+1}$):] \[[X_{ij}(s), X_{kl}(t)] = 0, \text{ for } j\neq k, i\neq l, \deg(s_1s_2)\leq m+2.\]
		
	\noindent\textit{Indeed,}  it suffices to consider the case $i=k$ and $j\neq l$: assume $s=yx$, where $deg(x)=1$,  and $m$ different from $i,j,l$. Then
\begin{align*}
 &[X_{ij}(yx), X_{il}(t)]\\\stackrel{\mbox{\tiny Corollary \ref{any  ad works}}}{=}&[[T_{mj}(1,x),X_{ij}(y)],X_{il}(t)]\\\stackrel{\mbox{\tiny (\ref{A^2m})}}{=}&[T_{mj}(1,x),[X_{ij}(y),X_{il}(t)]]-0= 0. 
\end{align*}
Two other cases ($j=l,i\neq k$ and $j=l,i= k$) clearly follow. The case $[X_{ij}(s), X_{kl}(t)]$ ($|\{i,j,k,l\}|=4$) follows from using $X_{kl}=[X_{km},X_{ml}]$, where $|\{i,j,k,l,m\}|=5$.
 
	\end{itemize}
	
\subsection*{\textit{\textbf{Proof of Lemma \ref{relm+1}}},   $  \boldsymbol{ n=4}$:}

Recall that $m\geq 10$, and $\mathfrak{R}:=k \la \mathcal{X} \ra$.  We have three types of relations to show, while assuming (\ref{B^1m}) through (\ref{B^3m}) and having degree $m+1$ elements  defined as before. 

\vskip.3cm
\begin{itemize}
	\item[$\bullet$$(B^1_{m+1})$:]
\[	[X_{ij}(s), X_{jk}(t)] = X_{ik}(st), \text{ for distinct } i, j, k, \text{where } \deg(st) \leq m+1.\]

	\noindent	\textit{Indeed,} the proof is similar to that of $(A^1_{m+1})$.
		\vskip .2cm 
	\item[$\bullet$$(B^2_{m+1})$:]
	\[[X_{ij}(s), X_{kl}(t)] = 0, \text{ for } i=k \text{ or } j=l , \deg(st)\leq m+2.\]
	
		\noindent\textit{Indeed,}  after replacing appearances of (\ref{A^2m}) by (\ref{B^3m}), we get the same expression as the first three cases of $(A^2_{m+1})$.
		\vskip .2cm 
	\item[$\bullet$$(B^3_{m+1})$:]
	\[[\boldsymbol{w},[X_{ij}(s), X_{kl}(t)]] = 0, \text{ for } |\{i,j,k,l\}|=4, \deg(st)\leq m+2, \boldsymbol{w}= X_{ij}(\gamma), \]{ for an arbitrary letter }$\gamma \in \mathcal{X} \cup \{1\}$.
	
	\vskip .2cm
		\noindent\textit{Indeed,} take any $\boldsymbol{w} \in \{X_{l_1l_2}(\tau) \mid \deg{\tau}\leq 1 ,  l_1 \neq l_2 \in \{1,2,3,4\}\}$. 
	
	If $s=xy, \ \deg(y)=1, \ deg(s)=m+1$,  the computation \begin{gather*}
	[X_{ij}(xy), X_{kl}(t)]\stackrel{\mbox{\tiny Proposition  (\ref{left=right ad})}}{=}\big[[T_{kj}(1,y),X_{ij}(x)], X_{kl}(t)\big]\\ \stackrel{\mbox{\tiny (\ref{B^3m})}}{=}\big[X_{ij}(x), [T_{jk}(1,y),X_{kl}(t)] \big]
	\end{gather*} shows that we can reduce the assertion to the case where $\deg(s) < m+1$;  moreover we may assume additionally  {$\deg(s)\geq6,\  \deg(t)\geq6$}. 
	
	The remaining treatment depends on three cases regarding $\boldsymbol{w}$:
	
	\begin{enumerate}
		\item When $\boldsymbol{w}  \in {X}_{12} \cup {X}_{34} \cup {X}_{14} \cup {X}_{32}$, the relation follows from 
		$(B^3_{m})$ and $(B^2_{m})$ by a direct application of the Jacobi identity;
		
		\item  When $\boldsymbol{w}  \in {X}_{23} \cup {X}_{41}$, it suffices to consider the former case. Let  $s=t_1u_1, t=t_2u_2$, $\deg(t_1)=\deg(t_2)={ 3 },  { 3 } \leq \deg(u_1), \deg(u_2) \leq m-3, \deg(t_1u_1t_2u_2) \leq m+2$.
		{ {In addition,  let $t_2=s_a h$, where $\deg (s_a)=1, \deg (h)=2 $}}. Then 
		\begin{align*}
		&[{X}_{23}(\gamma), [{X}_{12}(t_1u_1), {X}_{34}(t_2u_2)]]\\
		=& [{X}_{23}(\gamma), [[{X}_{13}(t_1), {X}_{32}(u_1)], [{X}_{32}(t_2),{X}_{24}(u_2)]]\\
		\stackrel{\mbox{\tiny(\ref{B^2m})}}{=}& [{X}_{23}(\gamma), [{X}_{32}(t_2),[[{X}_{13}(t_1), {X}_{32}(u_1)], {X}_{24}(u_2)] ]]   \\
		=& [{X}_{23}(\gamma), [{X}_{32}(t_2),{X}_{14}(t_1u_1u_2)]]\\
		\stackrel{\mbox{\tiny (\ref{B^3m})}}{=}& [[{X}_{23}(\gamma), {X}_{32}(t_2)] ,{X}_{14}(t_1u_1u_2)]]   \\
		=& [T_{23}(\gamma, t_2) ,[{X}_{12}(t_1), [{X}_{21}(u_1),{X}_{14}(u_2)]]]\\
		\stackrel{\mbox{\tiny(\ref{B^3m})}}{=}& [-{X}_{12}(t_1\gamma t_2), [{X}_{21}(u_1),{X}_{14}(u_2)]]\\
		+&[{X}_{12}(t_1), [{X}_{21}(\gamma t_2u_1),{X}_{14}(u_2)]]+0\\
		=&{  [T_{23}(\gamma s_a , h)  ,[{X}_{12}(t_1), [{X}_{21}(u_1),{X}_{14}(u_2)]]]}\\
		=& [[{X}_{23}(\gamma s_a), {X}_{32}(h)] ,{X}_{14}(t_1u_1u_2)]]\\
		=& [{X}_{23}(\gamma s_a), [{X}_{32}(h),{X}_{14}(t_1u_1u_2)]]
		\stackrel{\mbox{\tiny(\ref{B^3m})}}{=}0. 
		\end{align*}
		\item If $\boldsymbol{w} \in {X}_{31} \cup {X}_{24} \cup {X}_{13} \cup  {X}_{42} \cup {X}_{21} \cup {X}_{43}$, then $\boldsymbol{w}$ can be represented as commutators in $X_{ij}$'s treated in cases $(1),(2)$. This completes the proof of the assertion. \end{enumerate}
	\vskip .2cm
	As we have traversed all cases, this proves $(B^3_{m+1})$.
\end{itemize}

\subsection*{\textit{\textbf{Proof of Lemma \ref{relm+1}}},   $  \boldsymbol{ n=3}$:} Recall that $\mathfrak{R}:=k\la \mathcal{X} \ra $. Under the premise $m\geq  \max\{|\mathcal{X}|+3, 10\}$, we again have three types of relations to show, while assuming (\ref{C^1m}) through (\ref{C^3m}) and having degree $m+1$ elements  defined as before. 

\vskip.3cm
\begin{itemize}
	
	\vskip .2cm
	\item[$\bullet$ $(C^1_{m+1})$:]
\[	[X_{ij}(s), X_{jk}(t)] = X_{ik}(st), \text{ for distinct } i, j, k, \text{where } \deg(st) \leq m+1. \]
	
	\vskip .2cm
	
	\noindent\textit{Indeed,} when $\deg(t)=0$,  $s=s_1s_2$ , $\deg(s_1)=m$,  $\deg(s_2)=1$, we have \begin{align*}
	&[X_{ij}(s_1s_2), X_{jk}(1)]   \\=&[[T_{kj}(1,s_2),X_{ij}(s_1)], X_{jk}(1)]\\     =&[-2X_{jk}(s_2), X_{ij}(s_1)]   +[T_{kj}(1,s_2), X_{ik}(s_1)]  \\ =&[-2X_{jk}(s_2), [X_{ik}(s_1),X_{kj}(1)]]   +[T_{kj}(1,s_2), X_{ik}(s_1)]  \\ \stackrel{\tiny\mbox{(\ref{C^3m})}}{=}& -2[ [X_{ik}(s_1),T_{jk}(s_2,1)]]   +[T_{kj}(1,s_2), X_{ik}(s_1)]\\   =&2[T_{jk}(s_2,1),X_{ik}(s_1)]   -[T_{jk}(s_2,1),X_{ik}(s_1)]   (=X_{ik}(s))   \\=&[X_{ij}(s_1), X_{jk}(s_2)] 
	\end{align*} (Note that we used frequently here Corollary \ref{any  ad works}).  This argument can also be applied to $[X_{ij}(1), X_{jk}(s_2s_1)]$,  using symmetry. Thus one can assume henceforth $\deg(t)\geq 1$.
	
	\vskip .2cm 
	Now we prove the general version. Now our goal is to prove:

\begin{claim}\label{newclaim2}
	{{Let $n=3$. For $w=\prod\limits_{i=1}^{m+1}w_i$ (e.g $\prod\limits_{1}^{m+1}w_i=w_1w_2$) , where $\deg(w)=m+1,\  \deg({w_i})=1 $, all expressions $[X_{ij}(\prod\limits_{l=1}^{p}w_l), X_{jk}(\prod\limits_{l=p+1}^{m+1}w_l)]$ are equal to $X_{ik}(w)$}. 
}
\end{claim}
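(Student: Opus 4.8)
The plan is to reduce the claim to a single ``re-splitting'' statement and then settle that statement with the permutation trick of Lemma~\ref{main trick}.

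\textbf{Reduction to a shift lemma.} Fix distinct $i,j,k$ and, for a word $w=w_1\cdots w_{m+1}$ of degree $m+1$, set $C_p:=[X_{ij}(w_1\cdots w_p),X_{jk}(w_{p+1}\cdots w_{m+1})]$ for $1\le p\le m$. Since every commutator of total degree $\le m$ already collapses to a single root element by $(\ref{C^1m})$, the only new content sits at degree $m+1$. I would first isolate a \emph{Shift Lemma}: for any degree-$(m+1)$ word $v_1\cdots v_{m+1}$ and any $1\le r\le m-1$ one has
\[ [X_{ij}(v_1\cdots v_r),X_{jk}(v_{r+1}\cdots v_{m+1})]=[X_{ij}(v_1\cdots v_{r+1}),X_{jk}(v_{r+2}\cdots v_{m+1})]. \]
Granting this, all $C_p$ coincide. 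To identify the common value with $X_{ik}(w)$, I expand the definition $X_{ik}(w)=[T_{ij}(1,w_1),X_{ik}(w_2\cdots w_{m+1})]$ from (\ref{def+deg}), write the degree-$m$ element $X_{ik}(w_2\cdots w_{m+1})$ as a commutator $[X_{ij}(\cdot),X_{jk}(\cdot)]$, and apply the Jacobi identity together with the pool formulas (\ref{t equations}) for $[T_{ij}(1,w_1),X_{ij}(\cdot)]$ and $[T_{ij}(1,w_1),X_{jk}(\cdot)]$; the two leftover terms are two splittings of one common degree-$(m+1)$ word and cancel by the Shift Lemma.

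\textbf{From the shift lemma to a diagonal identity.} Writing $X_{jk}(xt)=[X_{ji}(x),X_{ik}(t)]$ and $X_{ij}(sx)=[X_{ik}(s),X_{kj}(x)]$, applying the Jacobi identity, and using that the mixed brackets $[X_{ij}(s),X_{ik}(t)]$ and $[X_{ik}(s),X_{jk}(t)]$ are central by $(\ref{C^3m})$, I rewrite the two sides of the Shift Lemma as the ``diagonal actions'' $[T_{ij}(s,x),X_{ik}(t)]$ and $-[T_{kj}(x,t),X_{ik}(s)]$. Splitting each $T$ as $T_{ij}(s,x)=T_{ij}(1,sx)+t_j(s,x)$ and $T_{kj}(x,t)=T_{kj}(1,xt)+t_j(x,t)$ (Notation~\ref{def of Tij}), the $T(1,\cdot)$-parts act by honest left/right multiplication and contribute $\pm X_{ik}(sxt)$, which cancel — this uses the multiplication formulas for $T_{ij}(1,\sigma)$ secured in Step~(I) (Proposition~\ref{left=right ad} and its corollary). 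Equating the two diagonal actions and substituting the splittings, the Shift Lemma becomes
\[ [t_j(s,x),X_{ik}(t)]+[t_j(x,t),X_{ik}(s)]=0, \]
a sum of terms each of which vanishes in $\mathfrak{sl}_n(\mathfrak{R})$ because the diagonal $t_j$ is supported on the index $j\notin\{i,k\}$.

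\textbf{The crux: the permutation trick.} Each summand has exactly the shape handled by Lemma~\ref{main trick}: since $t_j(\alpha,\beta)=-[T_{ij}(1,\alpha),T_{jk}(1,\beta)]$, the bracket $[t_j(\alpha,\beta),X_{ik}(\gamma)]$ equals, up to sign, $[[T_{ij}(1,\alpha),T_{jk}(1,\beta)],X_{ik}(\gamma)]$, which by Lemma~\ref{main trick} is invariant under permuting the letters of $\gamma$ once $\deg\gamma\ge 3$. Running the computation of the $n=3$ case of Proposition~\ref{left=right ad}, I transfer the commutator from the $t$-argument to the $X$-argument, turning $[t_j(\alpha,\beta),X_{ik}(\gamma_1\gamma_2)]$ into $\pm[t_j(\gamma_1,\gamma_2),X_{ik}([\alpha,\beta])]$ with antisymmetrized argument $[\alpha,\beta]=\alpha\beta-\beta\alpha$; permutation-invariance then kills it as soon as $\deg(\alpha\beta)\ge 3$.

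\textbf{Main obstacle.} The delicate point is precisely this borderline behaviour: the needed identity lives one degree above the range where the linear relations (\ref{t equations}) apply, so nothing short of Lemma~\ref{main trick} closes it, and the trick's hypothesis $\deg\gamma\ge 3$ fails at the low-degree corners (a single-letter $s$, or $\deg t\le 1$). These corners must be absorbed by first peeling letters via (\ref{def+deg}) to create enough length before the trick applies, which is exactly what forces the quantitative bound $m\ge\max\{q+3,10\}$ recorded in Proposition~\ref{claim2}.
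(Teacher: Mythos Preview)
Your strategy is sound and ultimately rests on the same two ingredients as the paper --- Corollary~\ref{any  ad works} (which you invoke to cancel the $T(1,\cdot)$-parts) and the permutation trick of Lemma~\ref{main trick} (which you invoke to kill the $t_j$-parts) --- but the organisation differs. The paper proves the shift identity (its Claim~\ref{n=3multid}) in two strokes: when the moved block $b$ has $\deg(b)\ge 2$ one simply rewrites $[X_{13}(a),X_{32}(bc)]=[T_{13}(a,1),X_{12}(bc)]$ via centrality $(\ref{C^3m})$ and then applies Corollary~\ref{any  ad works} directly to swap the $T$-action from left to right; when $\deg(b)=1$ the paper does \emph{not} go through $t_j$ and Lemma~\ref{main trick}, but instead uses the $T$-identities of Lemma~\ref{t eqn} to rewrite the difference as a new shift whose moved block has degree $\ge 2$, reducing to the first case. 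Your route --- split $T_{ij}(s,x)=T_{ij}(1,sx)-t_j(x,s)$, cancel the scalar pieces, then kill $[t_j(\cdot,\cdot),X_{ik}(\cdot)]$ by the transfer-plus-permutation argument from the $n=3$ proof of Proposition~\ref{left=right ad} --- is a legitimate alternative and arguably more systematic, though it trades the paper's short reduction for a longer computation.

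Two points to tighten. First, your boundary discussion is off: the degree constraint you actually need (so that Lemma~\ref{main trick} applies after transfer) is $\deg(s),\deg(t)\ge 2$, which only excludes the shifts $C_1\!\leftrightarrow\! C_2$ and $C_{m-1}\!\leftrightarrow\! C_m$; these are recoverable by one more application of your own identification step (peel a letter via (\ref{def+deg}) and feed the leftover difference back into an already-established middle shift), but this has nothing to do with the bound $m\ge q+3$ --- in the paper that bound enters only in the proof of $(C^3_{m+1})$, where one needs a repeated letter to exploit Claim~\ref{lastCclaim}(a). Second, your splitting formula has a small sign/argument slip: the correct identity is $T_{ij}(s,x)=T_{ij}(1,sx)-t_j(x,s)$, not $+t_j(s,x)$; this does not affect the argument since the $t_j$-term is shown to vanish anyway.
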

	
	\vskip .2cm
	
	Note that when $p=1$ or $m+1$, this has  already been shown by what is done in the preceding paragraph. Without loss of generality, set $i=1, j=3, k=2$, and we're down to proving:
	\begin{claim}\label{n=3multid} Let $n=3$. Assuming $m\geq  \max\{|\mathcal{X}|+3, 10\}$, $\deg(abc)=m+1$ and $\deg(a), \deg(b), \deg(c) \geq 1$, the equality 
		\[[X_{13}(a) ,X_{32}(bc)] =[X_{13}(ab) ,X_{32}(c)],\] holds in $L_m$.
	\end{claim}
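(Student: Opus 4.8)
The two sides of Claim \ref{n=3multid} are elements of degree $m+1$, so the low-degree relations (\ref{C^1m})--(\ref{C^3m}) cannot by themselves force the equality; the genuinely new input will be the permutation trick of Lemma \ref{main trick}. The plan is first to reduce to moving a single letter. Writing $b=b_1b'$ with $b_1$ a letter, the identity $[X_{13}(a),X_{32}(b_1b'c)]=[X_{13}(ab_1),X_{32}(b'c)]$ is a single-letter instance (the right factor $b'c$ still has positive degree), so by induction on $\deg(b)$ it suffices to treat $b=\beta$ a single letter, with $\deg(a)+\deg(c)=m$ and $\deg(a),\deg(c)\ge 1$.

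Next I would strip the two brackets down to diagonal elements. Using (\ref{C^1m}) to write $X_{32}(\beta c)=[X_{31}(\beta),X_{12}(c)]$ and $X_{13}(a\beta)=[X_{12}(a),X_{23}(\beta)]$ (legal since each has degree $\le m$) and expanding by the Jacobi identity, the two cross terms that appear are $[X_{13}(a),X_{12}(c)]$ and $[X_{12}(a),X_{32}(c)]$. These are central of the type governed by (\ref{C^3m}); because $\beta$ is a single letter and $\deg(ac)=m-1\le m+1$, relation (\ref{C^3m}) kills their brackets against $X_{31}(\beta)$ and $X_{23}(\beta)$. What remains is the equivalent statement $[T_{13}(a,\beta),X_{12}(c)]=[X_{12}(a),T_{23}(\beta,c)]$ (both of which would collapse to $X_{12}(a\beta c)$ if the total degree were $\le m$).

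To attack this I would decompose the diagonal elements via the identities of Lemma \ref{t eqn}, which hold in degree $\le m$: namely $T_{13}(a,\beta)=T_{13}(1,\beta a)-[T_{12}(1,a),T_{23}(1,\beta)]$ and $T_{23}(\beta,c)=T_{23}(1,c\beta)-[T_{21}(1,\beta),T_{13}(1,c)]$, and then peel the letters of $a$ and $c$ off one at a time (again using only degree $\le m$ identities). The effect is that the principal parts built from the $T(1,\cdot)$ pieces telescope against one another---their consistency being exactly what Corollary \ref{any  ad works} provides---while every discrepancy is pushed into correction terms of the precise shape $[[T_{12}(1,u),T_{23}(1,v)],X_{13}(\gamma)]$. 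Each such correction is a difference $X_{13}(\cdots pq\cdots)-X_{13}(\cdots qp\cdots)$ sitting inside that bracket, so Lemma \ref{main trick} shows it vanishes, since $[[T,T],X_{13}(-)]$ is invariant under permuting the letters of its last argument.

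The main obstacle is the bookkeeping of the previous paragraph: organizing the peeling so that \emph{every} manipulation is a degree $\le m$ identity valid in $L_m$, while the entire genuinely degree-$(m+1)$ content is routed through a single application of the permutation trick. Two points require care. First, the central (\ref{C^3m})-terms are nonzero in $L_m$ and only commute with letter-generators within the degree-$m$ window, so they must be produced and discarded with exact attention to degree. Second, Lemma \ref{main trick} requires its permuted argument to have degree $\ge 3$; guaranteeing this after the peeling is what forces the bounds $m\ge 10$ and, for $n=3$, $m\ge |\mathcal{X}|+3$, the latter reflecting that one additionally needs a surviving block of degree $\ge 3$ on which to trigger the trick, whose existence across the $q=|\mathcal{X}|$ available letters is what the pigeonhole-type bound $m\ge |\mathcal{X}|+3$ secures. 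Once all corrections are seen to vanish, the two brackets coincide, which is Claim \ref{n=3multid}.
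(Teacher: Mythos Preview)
Your reduction goes in the wrong direction. You strip $b$ down to a single letter and then try to solve that base case by an elaborate peeling/telescoping scheme routed through Lemma~\ref{main trick}. But the single-letter case is the \emph{hard} one; the paper's insight is the reverse. When $\deg(b)\ge 2$, a two-line argument using $(C^3_m)$ and Corollary~\ref{any  ad works} already gives
\[
[X_{13}(a),X_{32}(bc)]\;=\;[T_{13}(1,a),X_{12}(bc)]\;=\;[T_{32}(1,c),X_{12}(ab)]\;=\;[X_{13}(ab),X_{32}(c)],
\]
with no further work. You cite Corollary~\ref{any  ad works} in passing but never notice that it kills the multi-letter case outright. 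For the remaining case $\deg(b)=1$, the paper does not peel at all: it rewrites the difference as $[X_{12}(a),\,T_{23}(1,bc)-T_{23}(b,c)]$ and uses a single Lemma~\ref{t eqn} identity to convert this into a difference of the same shape in which the ``middle block'' is now $c$, which (after possibly swapping $a$ and $c$) has degree $\ge 2$, so the already-proven easy case applies.

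Beyond the strategic misstep, your third paragraph has concrete errors. The identities you quote are wrong on indices: one computes $[T_{12}(1,a),T_{23}(1,\beta)]=-T_{23}(a,\beta)+T_{23}(1,\beta a)$, so this commutator rearranges $T_{23}$, not $T_{13}$; your formula for $T_{13}(a,\beta)$ is therefore false, and similarly for the second one. Your correction terms are asserted to land in $[[T_{12}(1,u),T_{23}(1,v)],X_{13}(\gamma)]$, but your own reformulation lives over $X_{12}$, so an index reconciliation is missing before Lemma~\ref{main trick} can be invoked. Finally, the bound $m\ge |\mathcal X|+3$ plays no role in the paper's proof of this claim; it is consumed later, in the verification of $(C^3_{m+1})$. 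Your justification of that bound via pigeonhole is a rationalization of a hypothesis that is simply inherited here, not used.
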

	
	\vskip .2cm
	
\noindent\textit{Proof of Claim \ref{n=3multid}.}	It follows from  Proposition \ref{left=right ad} that:
	
	\begin{itemize}
		\item[$(i)$] if $\deg(b) \geq 2$, then $[X_{13}(a) ,X_{32}(bc)]$ 
	$\stackrel{\tiny\mbox{(\ref{C^3m})}}{=}$
	$[T_{13}(a,1), X_{12}(bc)]$$=[ T_{32}(c,1),X_{12}(ab)]$ $\stackrel{\tiny\mbox{(\ref{C^3m})}}{=} [X_{13}(ab) ,X_{32}(c)]$. 
	
	\vskip .3cm 
	\item[$(ii)$] When $\deg(b)=1$, we deploy the following trick:  \begin{align*}
	& [X_{13}(a) ,X_{32}(bc)]   -[X_{13}(ab) ,X_{32}(c)] \\ 
	 \stackrel{\tiny\mbox{(\ref{C^3m})}}{=}  &
	 [X_{12}(a), T_{23}(1,bc)]  - [X_{12}(a), T_{23}(b,c)]  \\ \stackrel{\tiny\mbox{(\ref{C^3m})}}{=} &  [X_{12}(a), T_{23}(1,bc) - T_{23}(b,c)]. 
	\end{align*} 
	
	Now write $b=de$. The standard equalites regarding $T$ in $L_m$ (i.e. Lemma \ref{t eqn}) gives\begin{gather*}
	  T_{31}(cd,e)+T_{23}(de,c)\\ = T_{31}(cd,e)-T_{32}(c,de)=T_{21}(d,ec)\\ =T_{23}(1,dec)+T_{31}(d,ec),
	\end{gather*} which is  \[T_{23}(1,bc)-T_{23}(1,bc)=T_{31}(d,ec)-T_{31}(cd,e)\] after rearrangement. Therefore \begin{align*}
	 &\ [X_{12}(a), T_{23}(1,bc) - T_{23}(b,c)]   \\=&\ [X_{12}(a), T_{31}(d,ec)-T_{31}(cd,e)]\\    =&\ [X_{13}(ec),X_{32}(da)]-[X_{13}(e),X_{32}(cda)]. 
	\end{align*}
	
	But as $m$ is big enough, we can always choose $\deg(c) \geq 2$ (elsewise reverse the role of $c$ and $a$), so we're back to the $deg(b) \geq 2$ case proven in the previous paragraph. This proves Claim \ref{n=3multid}. \hfill  $\square$ \end{itemize}

	\vskip .2cm
	
Having settled Claim \ref{n=3multid} (and therefore Claim \ref{newclaim2}), the proof of ($C^1_{m+1}$) is complete.

	\vskip .2cm
	
		\item[$\bullet$ $(C^2_{m+1})$:]
\[	[X_{ij}(s), X_{ij}(t)] = 0, \text{ for } i\neq j, \  \deg(st)\leq m+2.\]
	
	\textit{Indeed,}  without loss of generality, assume $s=yx$, where $\deg(x)=1, \deg(y)=m,$ $\deg(t)=1$ . Take $k\neq i, j$. Then we have \begin{align*} &\ [X_{ij}(yx), X_{ij}(t)]\\
	=&\ [[X_{ik}(y),X_{kj}(x)],X_{ij}(t)]\\=&\ [[X_{ik}(y),X_{ij}(t)],X_{kj}(x)]   +[X_{ik}(y), [X_{kj}(x),X_{ij}(t)]]\\\stackrel{\mbox{\tiny ( \ref{C^3m})}}{=}&\ 0+0 = 0.\end{align*}  This proves $(  C^2_{m+1})$.
	
	\vskip .2cm
	
	\item[$\bullet$ $(C^3_{m+1})$:]
\[	[\boldsymbol{w},[X_{ij}(s), X_{kl}(t)]] = 0,  \text{ for } i=k \text{ or } j=l, \deg(st)\leq m+2, \boldsymbol{w}= X_{ij}(\gamma), \] {for an arbitrary letter }$\gamma \in \mathcal{X} \cup \{1\}$.
	
	\vskip .2cm
	
	\vskip .2cm
	\noindent\textit{Indeed,}\mbox{}  without loss of generality,  set $i=k=1, \ j=2, \ l=3$, and assume $\deg(s) \geq \deg(t)$.  {We may also assume $\deg(t) \geq 1$}, as $s=s_1s_2, \ \deg(s)=m+1,\ \deg(s_1)=1 $ implies   $[X_{ij}(s_1s_2), X_{ik}(1)]$$=[[T_{ik}(s_1,1), X_{ij}(s_2)], X_{ik}(1)]$ $\stackrel{\mbox{\tiny(\ref{C^3m})}}{=}$$[[T_{ik}(s_1,1) , X_{ik}(1)], X_{ij}(s_2)]$  $=2[X_{ik}(s_1), X_{ij}(s_2)]$. 
	
	\vskip .2cm
	
	Adhering to the above assumptions,  one may fall back to an  argument similar to what was done in Lemma \ref{main trick}: express $s=s_3s_1s_2=z_1z_2$, where $\deg(z_1)=\deg(s_1)=\deg(s_2)=1$,$\deg(s_3)>0$. 
	It follows that  \begin{align*}
	 & \ [X_{12}(s_3(s_1s_2-s_2s_1)),X_{13}(t)] \\
	 \stackrel{\mbox{\tiny ( \ref{C^3m} , \ref{C^1m})}}{=}  & \ \bigl[ [[T_{12}(1,s_1),T_{23}(1,s_2)],X_{12}(s_3)], X_{13}(t)\bigr] \\ \stackrel{\mbox{\tiny(\ref{C^3m} , \ref{C^1m}, Jacobi id)}}{=}  & \ 0+0=0. 
	\end{align*}Note also a symmetry: \[ [X_{12}(s),X_{13}(t)]=[X_{12}(s),[X_{12}(t),X_{23}(1)]] 
	 \stackrel{(\tiny C^2_{m+1})}{=}   [X_{12}(t),X_{13}(s)]. \] 
	  Finally, for $2\leq deg(s), \deg(t) \leq m$,  $deg(st) \leq m+2$, one may further reallocate degrees between arguments (using  $X_{13}(t)=[X_{12}(t_1),X_{23}(t_2)]$, where $t=t_1t_2$, $\deg(t_2)=1$):   	  
	  \[ [X_{12}(s),X_{13}(t)]=[X_{12}(s),[X_{12}(t_1),X_{23}(t_2)]] 
	  \stackrel{(\tiny C^1_{m+1}, \ C^2_{m+1})}{=}   [X_{12}(t_1),X_{13}(st_2)]. \]

	\vskip .2cm 
	
	The above may be summarized as:
	
	\begin{claim}\label{lastCclaim}
In $L_m$, let $\deg(st)=m+2$, $s=s_1s_2$, $t=t_1t_2$, where $\deg(s_1)=\deg(t_1)=1$. Then \begin{enumerate}
	\item[($a$)] \[ [X_{12}(s),X_{13}(t)]=[X_{12}(s'),X_{13}(t')], \] where $s'=s_1s_2'$ and $t'=t_1t_2'$, $s_2'$ (resp. $t_2'$) is any permutation of $s_2$ (resp. $t_2$); under the same assumption, all $[X_{13}(s_2's_1),X_{23}(t_2't_1)]$ are equal. The same equalites remain valid after $\{1,2,3\}$, appearing as subscripts of $X$, are permuted.
	\item[($b$)] \[[X_{ij}(t),X_{kl}(s)]=[X_{ij}(s),X_{kl}(t)].\]
	\item[($c$)]
	The following equalites remain valid after any permutation of indices:
\begin{align*}
		 [X_{12}(s_1s_2),X_{13}(t_1t_2)] &=[X_{12}(s_1s_2t_2),X_{13}(t_1)],\\
		 [X_{13}(s_1s_2),X_{23}(t_1t_2)]&=[X_{13}(t_2),X_{23}(t_1s_1s_2)].
\end{align*}

\end{enumerate}
	\end{claim}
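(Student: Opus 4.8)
The plan is to treat Claim \ref{lastCclaim} as a consolidation of the three computations performed just above it, supplying the combinatorial glue (generation of permutations by adjacent transpositions and the $S_3$-symmetry of the presentation) that upgrades those single computations to the stated general identities. Since $(C^1_{m+1})$ and $(C^2_{m+1})$ are already in hand, I would first record the exchange symmetry $(b)$: writing $X_{13}(t)=[X_{12}(t),X_{23}(1)]$ by $(C^1_{m+1})$, expanding $[X_{12}(s),X_{13}(t)]$ with the Jacobi identity, discarding the term $[X_{12}(s),X_{12}(t)]=0$ by $(C^2_{m+1})$ and collapsing the survivor with $(C^1_{m+1})$ gives $[X_{12}(s),X_{13}(t)]=[X_{12}(t),X_{13}(s)]$. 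The other index layouts in $(b)$ follow because the whole presentation of $L_m$ is invariant under permuting $\{1,2,3\}$, so any identity proved for one labelling transports verbatim.

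With $(b)$ available I would prove $(a)$. The engine is the adjacent-swap identity obtained immediately before the claim, $[X_{12}(s_3 s_1 s_2),X_{13}(t)]=[X_{12}(s_3 s_2 s_1),X_{13}(t)]$ for any prefix $s_3$ of positive degree; since such prefixes realize every transposition of two consecutive letters sitting in positions $\ge 2$, and these transpositions generate the full symmetric group on the letters after the first, I obtain invariance of $[X_{12}(s),X_{13}(t)]$ under arbitrary permutation of the tail $s_2$, the leading letter $s_1$ staying pinned because no admissible swap reaches position one. Permuting the tail $t_2$ is then reduced to the previous case by applying $(b)$ to exchange the two arguments, permuting, and exchanging back. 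The companion assertion that all $[X_{13}(s_2's_1),X_{23}(t_2't_1)]$ agree is the mirror statement for the second $S_3$-orbit of index layouts (shared second index rather than shared first); here the left--right symmetry recorded in Proposition \ref{left=right ad}, Corollary \ref{any  ad works} and the remark following it lets me run the identical trick while redistributing on the right, so that it is now the \emph{last} letter of each word that is pinned.

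For $(c)$ I would use the redistribution already computed: from $X_{13}(t_1t_2)=[X_{12}(t_1),X_{23}(t_2)]$, Jacobi, $(C^2_{m+1})$ and $(C^1_{m+1})$ one gets $[X_{12}(s),X_{13}(t_1t_2)]=[X_{12}(t_1),X_{13}(s t_2)]$, and combining this with $(b)$ yields the first displayed equality $[X_{12}(s_1s_2),X_{13}(t_1t_2)]=[X_{12}(s_1s_2t_2),X_{13}(t_1)]$. The second displayed equality is the corresponding statement for the shared-second-index orbit and is produced by the same split--Jacobi--recombine step performed on the right, exactly as in the second half of $(a)$. Throughout I must verify only that every intermediate word stays inside the degree window where $(C^1_{m+1})$, $(C^2_{m+1})$, Proposition \ref{left=right ad} and Corollary \ref{any  ad works} have been validated, which is precisely what the standing bound $m\ge\max\{|\mathcal{X}|+3,10\}$ secures by always leaving a factor of degree $\ge 2$ available to split.

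The main obstacle I anticipate is not any single computation but the careful bookkeeping in $(a)$: one must confirm that reducing an arbitrary tail permutation to adjacent transpositions never forces a swap into position one and never drives an intermediate sub-word below the degree threshold demanded by Corollary \ref{any  ad works}, and one must handle the shared-second-index orbit in genuine parallel to the shared-first one rather than by a naive relabelling, since no permutation of $\{1,2,3\}$ converts a shared-first layout into a shared-second layout. Keeping these two orbits cleanly separated, while exploiting the left--right symmetry to avoid duplicating every argument, is the delicate part.
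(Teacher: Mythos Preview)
Your proposal is correct and is essentially the paper's own approach: the claim is explicitly introduced as a summary of the three computations immediately preceding it (the adjacent-swap identity via $[T_{12}(1,s_1),T_{23}(1,s_2)]$, the exchange symmetry from $(C^2_{m+1})$, and the degree-reallocation from $X_{13}(t)=[X_{12}(t_1),X_{23}(t_2)]$), and you assemble these pieces in exactly the intended way, correctly noting that adjacent transpositions generate the tail permutations and that the shared-second-index orbit requires its own parallel treatment rather than an $S_3$-relabelling.
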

 
	 	 We come back to validating  $(C^3_{m+1})$.	 Our analysis splits into cases regarding $\boldsymbol{w}$, in a way similar to $(B^3_{m+1})$.
	 	 
	 \begin{enumerate}
	 	\item If $\boldsymbol{w}  \in {X}_{12} \cup {X}_{13}$, then the desired relation follows from 
	 	(\ref{C^3m}) and (\ref{C^2m}) by a direct application of the Jacobi identity;
	 	
	 	\item If $\boldsymbol{w} \in $ ${X}_{32} \cup {X}_{23}$, then without loss of generality one may assume that $\boldsymbol{w}=X_{23}(c)$, where $c\in R$ and $\deg(c)\leq 1$. Assume additionally $\deg(a_i), \deg(b_i) \geq 3$. Then: \begin{align*}
	 	&[{X}_{23}(c), [{X}_{12}(a_1b_1), {X}_{13}(a_2b_2)]]\\
	 	=& [{X}_{23}(c), [[{X}_{13}(a_1), {X}_{32}(b_1)], [{X}_{12}(a_2),{X}_{23}(b_2)]]\\
	 	\stackrel{\mbox{\tiny (By \ref{C^2m} ,  \ref{C^3m})}}{=}& [{X}_{23}(c), [{X}_{13}(a_1),[{X}_{12}(a_2), [{X}_{32}(b_1), {X}_{23}(b_2)] ]]   \\
	 		\stackrel{\mbox{\tiny (By  \ref{C^3m})}}{=}& \ 0 - [{X}_{13}(a_1),[{X}_{13}(a_2c), {T}_{32}(b_1,b_2)] ]\\
	 	& +[{X}_{13}(a_1),[{X}_{12}(a_2), [X_{23}(c),{T}_{32}(b_1,b_2)] ]]\\
	 	=& \ 0 - [{X}_{13}(a_1),[{X}_{13}(a_2c), {T}_{32}(b_1,b_2)] ]\\
	 	& +[{X}_{13}(a_1),[{X}_{12}(a_2), X_{23}(b_2b_1c+cb_1b_2)]]\\
	 	\stackrel{\mbox{\tiny (By  \ref{C^3m})}}{=}& - [{X}_{13}(a_1),[{X}_{13}(a_2c), {T}_{32}(b_1,b_2)] ]+0 \\	 
	 	\stackrel{\mbox{\tiny (By  \ref{C^3m})}}{=}& - [{X}_{13}(a_1),{X}_{13}(a_2cb_1b_2)] = [{X}_{13}(a_1),[{X}_{12}(a_2c),X_{23}(b_1b_2)]]\\ \stackrel{\mbox{\tiny (By  \ref{C^3m})}}{=}& 0. 	
	 	\end{align*}
	 	
	 	\item  If $\boldsymbol{w}  \in {X}_{21} \cup {X}_{31}$, it suffices to consider the latter. As $X_{31}(c)=[X_{32}(c),X_{21}(1)]$, we work only with $\boldsymbol{w}=X_{21}(1)$. Assume again $\deg(a_i), \deg(b_i) \geq 3$. Then:
	 	
	 	\begin{align*}
	 	&[[{X}_{12}(a_1b_1), {X}_{13}(a_2b_2)],{X}_{21}(1)]\\
	 	=& [[[{X}_{13}(a_1), {X}_{32}(b_1)], [{X}_{12}(a_2),{X}_{23}(b_2)]],{X}_{21}(1)]\\
	 	\mbox{\scriptsize (By $C^3_m$) } =& [0+[{X}_{12}(a_2),[{X}_{13}(a_1), T_{32}(b_1,b_2)]], {X}_{21}(1)]    \\
	 	\mbox{\scriptsize (By $C^3_m$) } =& { [T_{12}(a_2,1),[{X}_{13}(a_1), {T}_{32}(b_1,b_2)] ]}_{} {-[{X}_{12}(a_2),[X_{23}(a_1), {T}_{32}(b_1,b_2)] ]}_{}\\		
	 	& {-[{X}_{12}(a_2),[{X}_{13}(a_1), {X}_{21}(b_2b_1)] ]}_{}\\
	\mbox{\scriptsize (By $C^3_m$) } =& { [T_{12}(a_2,1),[{X}_{13}(a_1), {T}_{32}(b_1,b_2)] ]}_{} {-[{X}_{12}(a_2),X_{23}(b_2 b_1 a_1+a_1 b_1 b_2)]}_{}\\		
	 	&{-[{X}_{12}(a_2),X_{23}(-b_2 b_1 a_1)]}_{}\\	
	 	=& [T_{12}(a_2,1),[X_{13}(a_1),T_{32}(b_1,b_2)]]-[X_{12}(a_2),X_{23}(a_1b_1b_2)]\\
	 	=&[X_{13}(a_2a_1),T_{32}(b_1,b_2)]+[X_{13}(a_1),[T_{12}(a_2,1),T_{32}(b_1,b_2)]]\\
	 	&-[X_{12}(a_2),X_{23}(a_1b_1b_2)]\\
	 	=&[X_{12}(-a_2b_2b_1),X_{23}(a_1)]+[X_{12}(a_2),X_{23}(b_2b_1a_1+a_1b_1b_2)]\\&+[X_{13}(a_1),[T_{12}(a_2,1),T_{32}(b_1,b_2)]]-[X_{12}(a_2),X_{23}(a_1b_1b_2)]\\
	 		 	=&\underbrace{[X_{12}(-a_2b_2b_1),X_{23}(a_1)]+[X_{12}(a_2),X_{23}(b_2b_1a_1)]}_{A}\\&+\underbrace{[X_{13}(a_1),[T_{12}(a_2,1),T_{32}(b_1,b_2)]]}_{B}
	 	\end{align*}
	 	
	 	Let us first analyze item $A$, namely \[-[[T_{13}(1,a_2),X_{12}(b_2b_1)],X_{23}(a_1)]+[X_{12}(a_2),[T_{21}(b_2,b_1),X_{23}(a_1)]].\] Set $a_1=a_1'a_1''$, $b_2=b_2'b_2''$ and $b_1=b_1''b_1'$, where $\deg(a_i'')$, $\deg(a_i') $,  $\deg(b_i'')$, $\deg(b_i') >0$. Then \begin{align*}
	 	&[X_{12}(a_2),X_{23}(b_2b_1a_1)]
	 	=[X_{12}(a_2),[X_{21}(b_2'),X_{13}(b_2''b_1a_1)]]\\
	 	\stackrel{\mbox{\tiny (\ref{C^3m})}}{=}& [[X_{12}(a_2),X_{21}(b_2')], X_{13}(b_2''b_1a_1)]
	 	 =[T_{12}(a_2,b_2'),[T_{23}(a_1,b_1'), X_{13}(b_2''b_1'')]].
	 	\end{align*} Similar analysis gives \begin{align*}
	 	&[X_{12}(a_2b_2b_1), X_{23}(a_1)]=[[X_{13}(a_2''b_2b_1''), X_{32}(b_1')],X_{23}(a_1)] \\  \stackrel{\mbox{\tiny (\ref{C^3m})}}{=}& 
	   [X_{13}(a_2b_2b_1''),[X_{32}(b_1'),X_{23}(a_1)]]
	   =[[T_{12}(a_2,b_2'),X_{13}(b_2''b_1'')],T_{32}(b_1',a_1)].
	 	\end{align*} 
	 	
	 	It follows that the item 
\[	 	A=[[T_{12}(a_2,b_2'),T_{23}(a_1,b_1')], X_{13}(b_2''b_1'')].\]

 The expression $A$ can be rewritten as \[[X_{12}(b_2''[b_2'a_2,a_1b_1']),X_{23}(b_1'')]-[X_{12}(b_2''), X_{23}([b_2'a_2,a_1b_1']b_1'')],\] while $B$ is equal to \[[X_{12}(a_1'[b_2b_1,a_2]),X_{23}(a_1'')]-[X_{12}(a_1'),X_{23}([b_2b_1,a_2]a_1'')]. \]

 We claim that $A=B=0$. First, we prove each of the following equalities:
\vskip .2cm
 \begin{itemize}
	\item[$\bullet$]  $[X_{13}(a_1'[b_2b_1,a_2]), X_{23}(a_1'')]=0$
	\item[$\bullet$]  $[X_{13}(a_1'), X_{23}([b_2b_1,a_2]a_1'')]=0$	\item[$\bullet$]  $[X_{13}(b_2''[b_2'a_2,a_1b_1']), X_{23}(b_1'')]=0$	\item[$\bullet$]  $[X_{13}(b_2''), X_{23}([b_2'a_2,a_1b_1']b_1'')]=0$
\end{itemize}
\vskip .2cm 
It follows from part ($a$) of Claim \ref{lastCclaim} that the second and fourth formulas are valid. For the first and third, note that the terminal words of the two summands of  $a_1'[b_2b_1,a_2]$ are $b_1$ and $a_2$, while the terminal words of the two summands of 
 
$b_2''[b_2'a_2,a_1b_1']$ are $a_2$ and $b_1'$.
 
As $b_1'$ is a terminal word of $b_1$, if the terminal letters of    $a_2$ and $b_1$ are   equal, deferring to ($a$) of Claim \ref{lastCclaim} allows all bulleted formulas to be verified.
\vskip.2cm
To obtain equality of these two terminal letters, invoke the condition $m\geq  (|\mathcal{X}|+3)$. By parts ($c$) and ($b$) of Claim \ref{lastCclaim},  we have \[[{X}_{12}(a_1b_1), {X}_{13}(a_2b_2)]=[{X}_{12}(a_2b_2b_1),{X}_{13}(a_1)].\] Part ($b$) of Claim \ref{lastCclaim} further allows us to assume $\deg(a_2b_2b_1) >  |\mathcal{X}|+2$. Writing $a_2=a_2'a_2''$ where $\deg(a_2')=1$, we see that  in the word $a_2''b_2b_1$ at least one letter from the alphabet $X$ appears twice or more. Using ($a$) of Claim \ref{lastCclaim}, we are allowed to rearrange $a_2''b_2b_1$ so  that in the new $a_2''$, $b_1$ (and of course, $b_1'$) the terminal letters are equal, while retaining the degrees of $a_2''$,  $b_2$,  and $b_1$. This proves all four bullet equalities.

\vskip.2cm
What remains to be done is now straightforward. Let us take $A$ for example. By the four bullet equalities above,  \begin{align*}
A &=[X_{12}(b_2''[b_2'a_2,a_1b_1']),X_{23}(b_1'')]-[X_{12}(b_2''), X_{23}([b_2'a_2,a_1b_1']b_1'')]\\
&=[X_{13}(b_2''[b_2'a_2,a_1b_1']),T_{32}(1,b_1'')]-[T_{12}(b_2'',1), X_{13}([b_2'a_2,a_1b_1']b_1'')]\\
&=\bigl[[T_{12}(1,b_2''),X_{13}([b_2'a_2,a_1b_1'])],T_{32}(1,b_1'')\bigr]-[T_{12}(b_2'',1),X_{13}([b_2'a_2,a_1b_1']b_1'')]\\
&=-\bigl[[T_{12}(1,b_2''),T_{23}(1,b_1'')], X_{13}([b_2'a_2,a_1b_1'])\bigr]. 
\end{align*}	 	

As $\deg(b_i') \geq 1$, $\deg(a_i) \geq 3$, we may apply the same argument as was used in Lemma \ref{main trick} (recall that degree $m+1$ elements are defined and behave as expected, thanks to the proven relation $(C_{m+1}^1)$). It follows that $A=0$.

\vskip.2cm
The expression $B$ is treated no differently: similar computations yield
\[ B=-\bigl[[T_{12}(1,a_1'),T_{23}(1,a_1'')], X_{13}([b_2b_1,a_2])\bigr],\] and the same Lemma \ref{main trick} type argument gives $B=0$. 

Since $[[{X}_{12}(a_1b_1), {X}_{13}(a_2b_2)],{X}_{21}(1)]=A+B=0$, the relation $(C_{m+1}^3)$ is proven.

	\end{enumerate}
	 
	\vskip .2cm
	This proves Lemma \ref{relm+1}, therefore Proposition \ref{claim2}, Lemma \ref{mainlemma} and (finally) Theorem \ref{main}. \qedhere
\end{itemize}

\bibliographystyle{plain}  
\bibliography{FPShenzhenmergedBib}   

\begin{thebibliography}{10}

\bibitem{BermanMoodyInventiones}
S.~Berman and R.~V. Moody.
\newblock Lie algebras graded by finite root systems and the intersection
  matrix algebras of {S}lodowy.
\newblock {\em Invent. Math.}, 108(2):323--347, 1992.

\bibitem{BlochLiealgebra}
Spencer Bloch.
\newblock The dilogarithm and extensions of {L}ie algebras.
\newblock In {\em Algebraic {$K$}-theory, {E}vanston 1980 ({P}roc. {C}onf.,
  {N}orthwestern {U}niv., {E}vanston, {I}ll., 1980)}, volume 854 of {\em
  Lecture Notes in Math.}, pages 1--23. Springer, Berlin-New York, 1981.

\bibitem{faulkner1989barbilian}
John~R. Faulkner.
\newblock Barbilian planes.
\newblock {\em Geom. Dedicata}, 30(2):125--181, 1989.

\bibitem{GaoShang}
Yun Gao and Shikui Shang.
\newblock Universal coverings of {S}teinberg {L}ie algebras of small
  characteristic.
\newblock {\em J. Algebra}, 311(1):216--230, 2007.

\bibitem{hahn1989classical}
A.~Hahn and O.T. O'Meara.
\newblock {\em The classical groups and K-theory}.
\newblock Grundlehren der mathematischen Wissenschaften. Springer-Verlag, 1989.

\bibitem{Kassellodaycentral}
C.~Kassel and J.-L. Loday.
\newblock Extensions centrales d'alg\`ebres de {L}ie.
\newblock {\em Ann. Inst. Fourier (Grenoble)}, 32(4):119--142 (1983), 1982.

\bibitem{KrsticMcCool}
Sava Krsti\'{c} and James McCool.
\newblock Presenting {${\rm GL}_n(k\langle T\rangle)$}.
\newblock {\em J. Pure Appl. Algebra}, 141(2):175--183, 1999.

\bibitem{loday2013cyclic}
J.L. Loday.
\newblock {\em Cyclic Homology}.
\newblock Grundlehren der mathematischen Wissenschaften. Springer Berlin
  Heidelberg, 2013.

\bibitem{Neher}
Erhard Neher.
\newblock An introduction to universal central extensions of {L}ie
  superalgebras.
\newblock In {\em Groups, rings, {L}ie and {H}opf algebras ({S}t. {J}ohn's,
  {NF}, 2001)}, volume 555 of {\em Math. Appl.}, pages 141--166. Kluwer Acad.
  Publ., Dordrecht, 2003.

\bibitem{RehmannSoule}
U.~Rehmann and C.~Soul\'{e}.
\newblock Finitely presented groups of matrices.
\newblock In {\em Algebraic {$K$}-theory ({P}roc. {C}onf., {N}orthwestern
  {U}niv., {E}vanston, {I}ll., 1976)}, pages 164--169. Lecture Notes in Math.,
  Vol. 551, 1976.

\bibitem{schur1904darstellung}
Issai Schur.
\newblock {\"U}ber die darstellung der endlichen gruppen durch gebrochen
  lineare substitutionen.
\newblock {\em J. Reine Angew. Math.}, 127:20--50, 1904.

\bibitem{steinberg1967lectures}
Robert Steinberg.
\newblock {\em Lectures on {C}hevalley groups}.
\newblock Yale University, New Haven, Conn., 1968.
\newblock Notes prepared by John Faulkner and Robert Wilson.

\bibitem{WeibelHA}
Charles~A. Weibel.
\newblock {\em An introduction to homological algebra}, volume~38 of {\em
  Cambridge Studies in Advanced Mathematics}.
\newblock Cambridge University Press, Cambridge, 1994.

\end{thebibliography}

 \end{document}